\documentclass[preprint]{elsarticle}
\usepackage{amssymb}
\usepackage{amssymb,amsmath,mathrsfs,bm,xcolor}
\usepackage{cases}
\usepackage{array}
\usepackage{wasysym,latexsym}
\usepackage{graphicx,color,algorithm,algpseudocode}
\usepackage{booktabs,makecell,multirow}
\usepackage{epstopdf,epsfig}
\usepackage{diagbox}
\usepackage{float}
\usepackage{mathtools}
\usepackage{setspace}
\usepackage{amsthm}

\newtheorem{thm}{Theorem}
\newtheorem{lemma}{Lemma}
\newtheorem{remark}{Remark}

\newtheorem{assumption}{Assumption}
\usepackage{setspace}
\usepackage{graphicx} 
\usepackage{enumerate}

\usepackage{algorithm,algpseudocode}
\usepackage{algpseudocode} 

\usepackage{caption} 
\usepackage{subcaption} 
\usepackage{adjustbox} 
\usepackage[absolute,overlay]{textpos} 
\usepackage{geometry} 
\usepackage{url}
\usepackage{xcolor}

\usepackage[colorlinks,linkcolor=blue,anchorcolor=blue,linktocpage=true,urlcolor=blue,citecolor=blue]{hyperref}

\usepackage{enumitem} 
\usepackage{hyperref} 

\newcommand{\msc}[1]{\textbf{MSC Code}#1}

\journal{ }
\graphicspath{{figures/}}
\begin{document}

	\begin{frontmatter}
		\title{A New Primal-Dual Algorithm with Convex Combination and Extrapolation for Convex-Concave Saddle Point Problems with Nonlinear Coupling Term}
		\author[1]{Jialong Li}
		\ead{ailijia0417@163.com}
		
		\author[1]{Zexian Liu\corref{cor1}}
		\ead{liuzexian2008@163.com}

        \author[3]{Xiaokai Chang }
		\ead{xkchang@lut.edu.cn}
		
		\address[1]{School of Mathematics and Statistics, Guizhou University, Guiyang 550025, P.R. China}
        \address[3]{School of Science, Lanzhou University of Technology, Lanzhou 730050, P.R. China}
		
		\cortext[cor1]{Corresponding author}
		\begin{abstract}
			Convex-concave saddle point problems with nonlinear coupling term have wide applications in signal processing, machine learning, robust optimization, and generative models, among others. Primal-dual algorithms are widely used for convex-concave saddle point problems. However, when handling nonlinear coupling term in convex-concave saddle point problems, primal-dual algorithms usually encounter geometric mismatches between variable and mapping spaces, delayed gradient information, and strong dependence on linesearch, resulting in less stable performance and complex convergence analysis. To address these issues, we propose a new primal-dual algorithm named PDAce by combining convex combination and extrapolation strategies. More specifically, in the update of the primal variable, we construct a convex combination point to replace the current iterative point and compute the Jacobian matrix of the vector function in the nonlinear coupling term at the convex combination point. Besides, we use the latest information of convex combination points to extend extrapolation to the mapping space in the update of the dual variable. The core innovations lie  in performing linearization of the vector function in the nonlinear coupling term  at convex combination points and shifting extrapolation from the variable space to the nonlinear mapping space. This design completely eliminates nonlinear residual terms and allows for rigorous convergence analysis without linesearch. Under mild convex assumptions, we construct a new Lyapunov potential function to prove that PDAce is globally convergent with an ergodic convergence rate of $\mathcal{O}(1/N)$. Moreover, we develop an accelerated version of PDAce, termed aPDAce, which achieves $\mathcal{O}(1/N^2)$ rate under strong convexity of the primal function, and linear convergence when both the primal and dual functions are strongly convex. Extensive numerical experiments on quadratic constrained quadratic programming, convex-concave minimax games, and exponential coupling problems confirm  the stability and efficiency of the proposed algorithms in comparison with some state-of-the-art algorithms.
		\end{abstract}
		\begin{keyword}
			Primal-dual algorithm \sep Convex combination \sep Extrapolation \sep Saddle point problems \sep   Global convergence \sep Convergence rate			
		\end{keyword}      
	\end{frontmatter}
    \msc{: 49M29, 65K10, 65Y20, 90C25}

	\section{Introduction}\label{section1}
    \textcolor{black}{ We consider the following convex-concave saddle point problems with nonlinear coupling term $\langle g(x), y \rangle$:    
	\begin{equation} \label{prob1}
		\min_{x \in \mathcal{X}} \max_{y \in \mathcal{Y}}  \mathcal{L}(x, y) := f(x) + h(x) + \langle g(x), y \rangle - H^*(y) \buildrel \Delta \over = F(x) +\Phi(x,y) - H^*(y) ,
	\end{equation}
	where $x \in \mathcal{X}\subseteq \mathbb{R}^n$ is the primal variable and $y \in \mathcal{Y}\subseteq \mathbb{R}^m$ is the dual variable.
    Here, $\mathbb{R}^n$ and $\mathbb{R}^m$ are finite--dimensional Euclidean spaces equipped with the inner product $\langle \cdot, \cdot \rangle$ and induced norm $\|\cdot\| = \sqrt{\langle \cdot, \cdot \rangle}$; $H : \mathbb{R}^m \to (-\infty, +\infty]$, $f: \mathbb{R}^n \to (-\infty, +\infty]$ and $h : \mathbb{R}^n \to (-\infty, +\infty]$ are all extended real-valued, proper, closed, and convex functions; $H^*$ is the Legendre--Fenchel conjugate of $H$, i.e., $H^*(y) = \sup_{u \in \mathbb{R}^m} \left\{ \langle y, u \rangle - H(u) \right\}$ for all $y \in \mathbb{R}^m$;} 
	$\Phi : \operatorname{dom}(F) \times \operatorname{dom}(H^*) \to \mathbb{R}$ is a continuous function with appropriate differentiability properties, which is convex in its first argument and linear in its second argument, 
	where $\operatorname{dom}(F)$ denotes the effective domain of $F$. 
    Problem \eqref{prob1} encompasses a wide range of well-known applications in signal/image processing, compressive sensing, machine learning, and statistics (see, e.g., \cite{w1,w16,w17,w18,w19}). More generally, convex-linear coupling structures further extend the applicability of problem \eqref{prob1} to many other important problems across various fields. For example, the kernel matrix learning problem for support vector machines, investigated in \cite{w14} as problem (20), fits into the framework of problem \eqref{prob1}, where $\Phi$ is quadratic in the model parameter $x$ and linear in the kernel matrix variable $y$. Similarly, the maximum margin clustering problem in \cite{w20} also admits a coupling objective that is linear in $y$. Robust optimization models built upon Wald's max-min formulation can be cast as instances of problem \eqref{prob1} with a coupling term of the form $\Phi(x,y) = \langle g(x), y\rangle$, where $y$ denotes an uncertainty parameter (see, e.g., \cite{w21}). Furthermore, Wasserstein GANs, with a linear discriminator and a nonlinear parameterized generator, can be formulated as a special case of problem \eqref{prob1}, as discussed in \cite{w22}. This class of problems is also closely connected to optimal transport problems, as demonstrated in \cite{w23}. Further applications of problem \eqref{prob1} arise in machine learning, distributionally robust optimization, game theory, and signal processing (see, e.g., \cite{w24,w25,w26,w27,w28}). Notably, problem \eqref{prob1} commonly appears as subproblems in nonconvex-concave minimax and nonconvex optimization methods, including proximal-point, inner approximation, and penalty-based schemes (see, e.g., \cite{w29,w30,w31}).

    When   $\Phi(x,y) = \langle Kx, y\rangle$    for some matrix $K \in \mathbb{R}^{m\times n}$, problem \eqref{prob1} reduces to the classical convex-concave saddle point problem with a bilinear coupling term. 
    
	\subsection{Notion}
	Let $\phi(x)$ be an extended real-valued, proper, closed, convex function defined on the finite-dimensional Euclidean space $\mathbb{R}^m$. For any $x \in \mathbb{R}^m$, the gradient and subdifferential of $\phi(x)$ (whenever they exist) are denoted by $\nabla \phi(x)$ and $\partial \phi(x)$, respectively. The subdifferential is defined as $\partial \phi(x) := \left\{ \xi \in \mathbb{R}^m : \phi(y) \ge \phi(x) + \langle \xi, y - x \rangle, \ \forall y \in \operatorname{dom}(\phi) \right\}$. For any $\lambda > 0$, the proximal operator associated with $\lambda \phi$ is defined as $\operatorname{Prox}_{\lambda \phi}(x) = \operatorname*{argmin}_y \left\{ \phi(y) + \frac{1}{2\lambda} \| y - x \|^2 \right\}$, which is uniquely well-defined for all $x \in \mathbb{R}^m$. Suppose $\phi(x)$ is differentiable on the interior of its domain, $\operatorname{int}(\operatorname{dom}(\phi))$, and its gradient $\nabla \phi$ is Lipschitz continuous with constant $L_h \ge 0$, i.e., $\| \nabla \phi(x) - \nabla \phi(y) \| \le L_h \| x - y \|, \forall x, y \in \operatorname{int}(\operatorname{dom}(\phi))$. Then $\phi(x)$ is said to be $L_h$-smooth. The gradient $\nabla \phi$ is called locally Lipschitz continuous if it is Lipschitz continuous on every bounded subset of $\operatorname{int}(\operatorname{dom}(\phi))$.
	
	\subsection{Related algorithms}
	We now review some closely related literature. The Arrow--Hurwicz method~\cite{w4} was originally proposed to solve problem \eqref{prob1} with a bilinear coupling structure, i.e., $\Phi(x,y)=\langle Kx,y\rangle$. The core idea of the Arrow--Hurwicz algorithm is to perform alternating minimization with respect to $x$ and maximization with respect to $y$ for the minimax problem \eqref {prob1}, while incorporating proximal operators to fully utilize the most recent iterative information. Nevertheless, the Arrow--Hurwicz method is generally not guaranteed to converge, and with concrete counterexamples provided in~\cite{w32,w33}. To address this limitation, Korplevich~\cite{w25} and Popov~\cite{w35} developed two distinct modified variants of the Arrow--Hurwicz method by introducing extrapolation terms and optimistic gradient terms, respectively. In particular, the first-order primal-dual algorithm (FOPDA)~\cite{w1,w2} proposed by Chambolle and Pock via an extrapolation strategy constitutes a seminal contribution. Furthermore, Condat~\cite{w3} proposed a generalized variant of FOPDA. Its core iterative scheme is stated as follows:
	\begin{equation}\label{a1}
		\left\{
		\begin{aligned}
			x^k &= \mathrm{Prox}_{\tau f}\left(x^{k-1} - \tau \big(\nabla h(x^{k-1}) + K^\top y^{k-1}\big)\right), \\
            \bar{x}^k &= x^k + \theta\left(x^k - x^{k-1}\right), \\
            y^k &= \mathrm{Prox}_{\sigma H^*}\left(y^{k-1} + \sigma K \bar{x}^k\right).
		\end{aligned}
		\right.
	\end{equation}
    It is worth noting that the formulations in \eqref{a1} involve a bilinear coupling function. Nevertheless, many practical functions encountered in real-world applications do not satisfy the bilinear property and are generally nonlinear. Based on the FOPDA, Zhu et al.~\cite{w13} developed efficient algorithms for solving problem \eqref{prob1}, and established the corresponding primal-dual method along with its convergence rate analysis. Recently, Hamedani and Aybat~\cite{w11} proposed the following iterative scheme for a more general form of problem \eqref{prob1}:
	\begin{equation}\label{a2}
		\left\{
		\begin{aligned}
			&s^{k} = (1+\theta)\nabla_y \Phi(x^{k}, y^{k}) - \theta \nabla_y \Phi(x^{k-1}, y^{k-1}), \\
            &y^{k+1} = \mathrm{Prox}_{\sigma_k H^*}\left( y^k + \sigma_k s^k \right), \\
            &x^{k+1} = \mathrm{Prox}_{\tau_k f}\left( x^k - \tau_k (\nabla h(x^k) + \nabla_x \Phi(x^{k}, y^{k+1})) \right),
		\end{aligned}
		\right.
	\end{equation}
	where $\theta \in(0,1]$. Clearly, the iterative scheme \eqref{a2} can be regarded as a natural extension of the classical Chambolle--Pock primal-dual algorithm from bilinear settings to general nonlinear coupling scenarios. In addition, a linesearch strategy was introduced in~\cite{w11} to adaptively select primal and dual step sizes, and the resulting algorithm is denoted as APDB. When the strong convexity parameter $\mu = 0$, the method reduces to PDB. Correspondingly, the algorithms without linesearch are denoted as APD and GAPD (i.e., $\text{APD}(\mu=0)$), respectively. In their theoretical analysis, the convergence rate of $\mathcal{O}(1/N^2)$ can only be guaranteed under the specific structural assumptions of problem \eqref{prob1}.
	
	In another research direction, Chang et al.~\cite{w5} proposed the GRPDA for bilinear coupling problems by incorporating Malitsky’s Golden Ratio Algorithm (GRA)~\cite{w7} into the classical FOPDA framework to replace the extrapolation points with convex combination points. Similar convex combination acceleration strategies were employed in earlier work \cite{w8}. Zhou et al.~\cite{w12} generalized this method to the extended golden-ratio primal-dual algorithm (E-GRPDA). The corresponding iterative scheme is stated as follows:
	\begin{equation}\label{a3}
		\left\{
		\begin{aligned}
			z^k &= \frac{\psi - 1}{\psi} x^{k-1} + \frac{1}{\psi} z^{k-1}, \\
            x^k &= \mathrm{Prox}_{\tau f}\left(z^k - \tau \nabla h(x^{k-1}) - \tau K^\top y^{k-1}\right), \\
            y^k &= \mathrm{Prox}_{\sigma H^*}\left(y^{k-1} + \sigma K x^k\right),
		\end{aligned}
		\right.
	\end{equation}
	where $\psi \in \left(1, \frac{1+\sqrt{5}}{2}\right]$. This algorithm converges over an extended parameter range where the reciprocal of the convex combination parameter is bounded by the golden ratio. It achieves an ergodic convergence rate of $\mathcal{O}(1/N)$ and can be further accelerated to $\mathcal{O}(1/N^2)$ under the strong convexity condition. Subsequently, they developed a linesearch version in \cite{w6}, referred to as GRPDAL, to resolve the parameter selection problem.
	
	Recently, Nie and Long \cite{w10} extended this method to general coupling settings and established its global convergence along with a sublinear ergodic convergence rate of $\mathcal{O}(1/N)$ in general convex-concave scenarios. They still denote the generalized scheme as GRPDA, and the corresponding iterative updates are stated as follows:
	\begin{equation}\label{a4}
		\left\{
		\begin{aligned}
			z^k &= \alpha z^{k-1} + (1-\alpha) x^{k-1}, \\
            x^k &= \mathrm{Prox}_{\tau f}\big(z^k - \tau \nabla h(x^{k-1}) - \tau \nabla_x \Phi(x^{k-1}, y^{k-1})\big), \\
            y^k &= \mathrm{Prox}_{\sigma H^*}\big(y^{k-1} + \sigma \nabla_y \Phi(x^k, y^{k-1})\big),
		\end{aligned}
		\right.
	\end{equation}
	where $\alpha \in \left[\frac{\sqrt{5}-1}{2}, 1\right)$. Within this framework, Chang et al.~\cite{w5} proposed a linesearch algorithm based on GRPDA (denoted as PDAc-L) for solving general convex–concave saddle point problems. It is worth noting that, similar to the APDB algorithm, PDAc-L achieves an ergodic convergence rate of $\mathcal{O}(1/N^2)$ only when strong convexity acceleration is employed under the formulation of problem \eqref{prob1}.
	
	Although the existing GRPDA framework guarantees global convergence in general convex–concave settings, its numerical performance and theoretical analysis heavily depend on linesearch procedure. Furthermore, it remains necessary to establish a unified theoretical connection between bilinear and general convex-concave saddle point problems, so as to derive rigorous global convergence results under only standard convex-concave assumptions. These considerations fully motivate the research contributions of the present paper.
	
	\subsection{Motivations and Contributions}
    
    \textcolor{black}{Although numerous acceleration strategies have been successfully integrated into primal-dual algorithms, several limitations still exist when addressing convex-concave saddle point problems with nonlinear coupling term, which are summarized as follows:}
	\begin{enumerate}
		\item[(i)] In primal-dual algorithms, extrapolation or convex combination operations are performed exclusively on the primal and dual variable sequences for bilinear convex-concave saddle point problems, and these strategies heavily depend on the linearity of the coupling operator. When addressing convex-concave saddle point problems with the nonlinear coupling term $\Phi(x,y)=\langle g(x), y \rangle$, directly adapting these traditional acceleration frameworks may lead to critical vulnerabilities, since a convex combination operation in the variable space does not induce an equivalent convex combination in the mapping space due to the inherent nonlinearity. This geometric discrepancy breaks the delicate cross-term cancellations required for Lyapunov stability analysis, thus impeding the derivation of rigorous convergence guarantees.
        \textcolor{black}{ As a result, it is a common practice  to resort to linesearch procedures for convergence analysis, as in PDB~\cite{w11} and PDAc-L~\cite{w5}.} However, a linesearch often requires expensive computational cost. There are also few studies addressing the nonlinear coupling term directly due to its inherent complexity compared to bilinear case.
		
		\item[(ii)] When dealing with a highly nonlinear mapping $g(x)$, employing a naive alternating dual update and directly linearizing the mapping at the previous iterate $x^{k-1}$ introduces a structural lag. As shown below
        \begin{equation}\label{eq:x}
            x^k = \mathrm{Prox}_{\tau f}\big(z^k - \tau \nabla h(x^{k-1}) - \tau \nabla_x \Phi(x^{k-1}, y^{k-1})\big),
        \end{equation}
        where $z^k$ denotes the convex combination point. Evidently, the starting point $z^k$ of the gradient step within the proximal operator is inconsistent with the linearization point $x^{k-1}$. Therefore, this conventional gradient evaluation strategy fails to capture the true local geometry of the dynamic convex combination points, leading to rapid error accumulation and significantly restricting the stability region of the algorithm. 
        
        \item[(iii)] In addition, it is observed that both APDB and GRPDA adopt unilateral acceleration strategies for iterative sequences. Specifically, APDB only introduces extrapolation operations on the dual variable sequence $\{y^k\}$, which corresponds to the term $s^k$ in \eqref{a2}, i.e., 
        \[
        y^{k+1} = \mathrm{Prox}_{\sigma_k H^*}\left( y^k + \sigma_k s^k \right),
        \]
        where $s^k$ represents the extrapolation point. When $\Phi(x,y) = \langle g(x),y \rangle$, we can derive that $s^k = (1 + \theta)g(x^k)-\theta g(x^{k-1})$. In contrast, GRPDA merely applies convex combination points to update the primal variable sequence $\{x^k\}$, which corresponds to $z^k$ in \eqref{a4}, namely
        \[
        z^k = \frac{\psi - 1}{\psi} x^{k-1} + \frac{1}{\psi} z^{k-1}.
        \]
        Subsequently, $x^k$ is updated by using $z^k$ as shown in \eqref{eq:x}. Nevertheless, GRPDA fails to take extra factors of dual variables into account.
	\end{enumerate}

    \textcolor{black}{ The nonlinear coupling term  in the convex-concave saddle point problem  has an important impact on the algorithm design and convergence analysis of the corresponding algorithm. 
    Motivated by the above observations, this paper aims to investigate acceleration strategies for the nonlinear coupling term in convex-concave saddle point problems. 
    In the paper, 
    we propose a new primal-dual algorithm with convex combination and extrapolation (PDAce), which effectively reduces the adverse effects of nonlinearity by reconstructing the information transfer relationship between primal and dual variables.} The main contributions of this work are summarized as follows:
	
	\begin{enumerate}
		\item[(i)] \textcolor{black}{ To resolve the geometric mismatch and delayed information exchange of existing methods, we use  a convex combination point $\bar{x}^{k}$ to replace $x^{k}$ and compute the Jacobian matrix $J_g$ at $\bar{x}^{k}$ in the update of the primal variable $x$, namely,} 
        \[
        x^{k+1}  = \text{Prox}_{\tau f}\left(\bar{x}^{k} - \tau\big(\nabla h(x^k) + J_g(\bar{x}^{k})^\top y^k\big)\right).
        \]
        In addition, by placing the convex combination step immediately after the primal update and similar to classical extrapolation techniques in APDB of \cite{w11}, we use the latest information of convex combination points to extend extrapolation to the mapping space with a new auxiliary variable $v$:
		\[
		v^{k+1} = g(\bar{x}^{k+1}) - \alpha(g(\bar{x}^{k}) - g(x^{k+1})),
		\]
        which is used in the update of the dual variable:	$y^{k+1}  = \text{Prox}_{\sigma H^*}\left(y^k + \sigma v^{k+1} \right).$ The extrapolation in $g$-space exactly cancels the nonlinear residual from $J_g(\bar{x})^\top y$.

		\item[(ii)] 
        It is noted that the above new update structures for the primal and dual variables will make the convergence analysis quite complicated in contrast with that of the existing algorithms. By combining predictive smoothed linearization and mapping-space extrapolation, we construct a  new Lyapunov potential function and use it to establish  global convergence with an $\mathcal{O}(1/N)$ rate for PDAce under mild conditions.  Furthermore, we also prove that the iterative sequence of PDAce achieves a linear convergence rate under strong convexity and strong concavity.
	 	
		\item[(iii)]
        We develop an accelerated version (aPDAce) of PDAce under the strong convexity assumption of the primal function, and establish an $\mathcal{O}(1/N^2)$ ergodic convergence rate of aPDAce.
		
		\item[(iv)] Numerical comparisons with several state-of-the-art methods, including APD \cite{w11}, GRPDA \cite{w10}, and PDAc-L \cite{w5}  are performed on three types of convex-concave saddle point problems with nonlinear coupling term, confirming the effectiveness of the proposed methods.
	\end{enumerate}
	
	\subsection{Organization}
	The remainder of this paper is organized as follows. 
	Section \ref{section2} reviews necessary background knowledge and basic identities, and introduces the primal–dual gap function for subsequent analysis.
	Section \ref{section3} presents the algorithm PDAce and rigorously proves its global convergence as well as an $\mathcal{O}(1/N)$ ergodic convergence rate. Furthermore, if $f$ is strongly convex and $H^*$ is strongly convex, we prove that the iterative sequence $(x^k, y^k)$ converges linearly.
	Section \ref{section4} develops an accelerated variant of PDAce, which achieves a better $\mathcal{O}(1/N^2)$ ergodic convergence rate when $f$ is strongly convex. 
	Section \ref{section5} conducts numerical experiments to compare the proposed methods with several state-of-the-art algorithms on QCQP, convex–concave minimax games, and convex-concave minimax problems with exponential coupling, thereby verifying the practical effectiveness of our approaches.
	Finally, Section \ref{section6} concludes the paper and discusses possible future research extensions.
	
	\section{Preliminaries}\label{section2}
	\subsection{Fundamental Assumptions.} The algorithms developed in this paper are based on the following three assumptions.
	\begin{assumption}\label{ass1}
		There exists $(x^*, y^*) \in \operatorname{dom}(F) \times \operatorname{dom}(H^*)$ such that:
		\[
		\mathcal{L}(x^*, y) \leq \mathcal{L}(x^*, y^*) \leq \mathcal{L}(x, y^*),\quad \forall (x, y) \in \operatorname{dom}(F) \times \operatorname{dom}(H^*).
		\]
		Moreover, $\operatorname{dom}(F) \times \operatorname{dom}(H^*) \subseteq \operatorname{dom}(\Phi)$ with $\Phi(x, y) = \langle g(x), y \rangle$ and $\mathcal{L}(x^*, y^*)$ is finite.
	\end{assumption}
	Assumption \ref{ass1} is a standard requirement for the existence of a saddle point in convex-concave minimax problems. This condition is fundamental to the well-posedness of the problem and serves as a prerequisite for the convergence analysis of the proposed primal-dual algorithm~\cite{w5,w9,w10,w11,w13}. Specifically, the inclusion $\operatorname{dom}(F) \times \operatorname{dom}(H^*) \subseteq \operatorname{dom}(\Phi)$ ensures that the coupling term $\Phi(x,y)$ is well-defined over the entire feasible region. Furthermore, requiring $\mathcal{L}(x^*, y^*)$ to be finite precludes degenerate cases, ensuring that the sequence of objective values remains stable during the iteration process.
	\begin{assumption}\label{ass2}
		The functions $F$, $H$, and $\Phi$ satisfy the following conditions:
		\begin{enumerate}[label=(\alph*), leftmargin=*]
			\item The functions $f$, $h$, and $H$ are proper, closed, and convex on their domain. In addition, $h$ is $L_h$-smooth for some Lipschitz constant $L_h \in (0, +\infty)$.
			\item $\Phi(x, y) = \langle g(x),y\rangle$ is convex in $x$ for any $y \in \operatorname{dom}(H^*)$. In addition, there exist $ L_{x x} \geq 0, L_{x y} \ge 0$ such that for any $x, \tilde{x} \in  \operatorname{dom}(F)$ and $y, \tilde{y} \in \operatorname{dom}\left(H^{*}\right)$ there hold
			\[
			\begin{aligned}
				\left\|\nabla_{x} \Phi(x, y)-\nabla_{x} \Phi(\tilde{x}, \tilde{y})\right\|=\|J_g(x)^\top  y-J_g(\tilde{x})^\top  \tilde{y}\| \leq L_{x x}\|x-\tilde{x}\|+L_{x y}\|y-\tilde{y}\| .
			\end{aligned}
			\]
		\end{enumerate}
	\end{assumption} 
    Assumption \ref{ass2}\textit{(b)} implies that $g: \mathbb{R}^n \to \mathbb{R}^m$ is continuously differentiable ($C^1$) on $\operatorname{dom}(F)$. Moreover, the Jacobian $J_g$ is globally Lipschitz continuous with constant $L_{xx}$ and uniformly bounded by $L_{xy}$ over $\operatorname{dom}(F)$. Consequently, $g \in C^{1,1}$ (gradient-Lipschitz smooth), and $\nabla_x \Phi(x,y) = J_g(x)^\top y$ is globally Lipschitz continuous with respect to $(x,y)$. In addition, the convexity of $\Phi$ in $x$ for fixed $y$ implies that $g$ is a vector-valued convex mapping. By the convexity of $\Phi$ with respect to $x$ and the convex-combination form $\bar{x}^{k+1} = \alpha \bar{x}^{k} + (1-\alpha)x^{k+1}$, we obtain for any $y\in \operatorname{dom}(H^*)$:
    \begin{equation} \label{convex}
    \Phi(\bar{x}^{k+1},y) \le \alpha \Phi(\bar{x}^{k},y) + (1-\alpha) \Phi(x^{k+1},y).
    \end{equation}
	
	Clearly, if $\Phi(x, y) = \langle Kx, y \rangle$ is bilinear, then it automatically satisfies Assumptions \ref{ass2}\textit{(b)}. By Assumption \ref{ass2}\textit{(b)}, when $y = \tilde{y}$, it follows that
	$0 \leq \Phi(x, y) - \Phi(\bar{x}, y) - \left\langle \nabla_x \Phi(\bar{x}, y), x - \bar{x} \right\rangle \leq \frac{L_{xx}}{2} \| x - \bar{x} \|^2.$
	Rearranging the inequality leads to
	\begin{equation} \label{lipschitz}
		0 \leq - \Phi(\bar{x}, y) - \left\langle \nabla_x \Phi(\bar{x}, y), x - \bar{x} \right\rangle \leq \frac{L_{xx}}{2} \| x - \bar{x} \|^2 - \Phi(x, y).
	\end{equation}
	Assumption \ref{ass2} is widely used in general convex-concave minimax problems with nonlinear coupling; see, e.g., \cite{w5,w10,w11,w13}.\\
	
	Furthermore, we introduce the following standard assumptions on $f$ and $H$, which hold for many practical problems of interest, see, e.g., \cite[Chapter 6]{w15}.
	\begin{assumption}\label{ass3}
		Assume that the proximal operators of the component functions $f$ and $H$ have closed form formulas or can be evaluated efficiently.
	\end{assumption}
	
	\subsection{Some fundamental lemmas}
	In this section, we present several fundamental lemmas that play a key role in convergence analysis.
	\begin{lemma}[\cite{chang2021golden}, Fact 2.1]\label{lem1}
		Let $h: \mathbb{R}^n \rightarrow(-\infty,+\infty]$ be an extended real-valued closed proper and $\mu$-strongly convex function with modulus $\mu \geq 0$. Then for any $\tau > 0 $ and $x \in \mathbb{R}^n$, it holds that $z = \text{Prox}_{\tau h}(x)$ if and only if $h(y) \geq h(z) + \frac{1}{\tau} \langle x - z, y - z\rangle +\frac{\mu }{2}\| y - z\| ^2$ for all $y \in \mathbb{R}^n$.
	\end{lemma}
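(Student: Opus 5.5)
The plan is to prove both directions directly from the definition of the proximal operator and the first-order optimality characterization of strongly convex minimization. The object to work with is $\varphi(y) := h(y) + \frac{1}{2\tau}\|y-x\|^2$. It is closed and proper. It is also $(\mu + 1/\tau)$-strongly convex, because it is the sum of a $\mu$-strongly convex function and a $1/\tau$-strongly convex quadratic. Hence it has a unique minimizer, and by definition $\operatorname{Prox}_{\tau h}(x)$ is that minimizer.

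\emph{Necessity.} Suppose $z=\operatorname{Prox}_{\tau h}(x)$. By Fermat's rule, $0\in\partial\varphi(z)$. Since the quadratic term is finite and differentiable everywhere, the sum rule gives $\partial\varphi(z)=\partial h(z)+\frac{1}{\tau}(z-x)$. Therefore $\xi:=\frac{1}{\tau}(x-z)\in\partial h(z)$. For a $\mu$-strongly convex $h$, any subgradient satisfies $h(y)\ge h(z)+\langle \xi,y-z\rangle+\frac{\mu}{2}\|y-z\|^2$ for all $y$. This is the standard consequence of applying the ordinary subgradient inequality to the convex function $h-\frac{\mu}{2}\|\cdot\|^2$, whose subdifferential at $z$ is $\partial h(z)-\mu z$, and then expanding the squares. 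Substituting $\xi$ gives exactly the stated inequality.

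\emph{Sufficiency.} Suppose the inequality holds for all $y$. Since $\frac{\mu}{2}\|y-z\|^2 \ge 0$, we can drop it, which gives $h(y)\ge h(z)+\langle \frac{1}{\tau}(x-z),y-z\rangle$. So $\frac{1}{\tau}(x-z)\in\partial h(z)$. Alternatively, we can add $\frac{1}{2\tau}\|y-x\|^2$ to both sides and use the identity
\[
\tfrac{1}{2\tau}\|y-x\|^2=\tfrac{1}{2\tau}\|z-x\|^2-\tfrac{1}{\tau}\langle x-z,y-z\rangle+\tfrac{1}{2\tau}\|y-z\|^2 .
\]
This yields $\varphi(y)\ge\varphi(z)+\frac{1}{2\tau}\|y-z\|^2\ge\varphi(z)$, so $z$ minimizes $\varphi$. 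By uniqueness of the minimizer, $z=\operatorname{Prox}_{\tau h}(x)$.

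The only step needing care is the strongly convex subgradient inequality in the necessity direction. If one wants to avoid the subdifferential sum rule, a direct route works. Compare $\varphi(z)\le\varphi(z+t(y-z))$ for $t\in(0,1)$. Use strong convexity of $h$ along the segment:
\[
h(z+t(y-z))\le (1-t)h(z)+t\,h(y)-\tfrac{\mu}{2}t(1-t)\|y-z\|^2 .
\]
Expand the quadratic term of $\varphi$, divide by $t$, and let $t\downarrow 0$. This produces $h(y)\ge h(z)+\frac{1}{\tau}\langle x-z,y-z\rangle+\frac{\mu}{2}\|y-z\|^2$ directly. I would present this limit argument, since it is self-contained, and check that the $t$-dependent quadratic remainders vanish in the limit.
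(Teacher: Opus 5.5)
Your argument is correct. There is nothing in the paper to compare it with: the paper does not prove this statement and simply imports it as Fact 2.1 of the cited reference.

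Your route is the standard proof:
\begin{itemize}
\item For necessity, Fermat's rule and the sum rule give $\frac{1}{\tau}(x-z)\in\partial h(z)$, and the strongly convex subgradient inequality then yields the claim.
\item For sufficiency, the three-point identity shows that $z$ minimizes $\varphi(y)=h(y)+\frac{1}{2\tau}\|y-x\|^2$.
\item Your segment-and-limit variant for necessity is also valid. It has the advantage of avoiding subdifferential calculus entirely.
\end{itemize}

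Three small points are worth making explicit when you write it up.
\begin{itemize}
\item In the sufficiency direction, first note that $h(z)<+\infty$ before adding and cancelling terms. Otherwise the hypothesis forces $h\equiv+\infty$, which contradicts properness.
\item In the limit argument, cancelling $(1-t)h(z)$ uses $h(z)<+\infty$. This holds because $z$ minimizes the proper function $\varphi$. Points $y\notin\operatorname{dom}h$ make the claimed inequality trivial.
\item The identity $\partial\bigl(h-\frac{\mu}{2}\|\cdot\|^2\bigr)(z)=\partial h(z)-\mu z$ should be justified by applying the convex sum rule to $h=\bigl(h-\frac{\mu}{2}\|\cdot\|^2\bigr)+\frac{\mu}{2}\|\cdot\|^2$. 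Both summands there are convex, whereas $-\frac{\mu}{2}\|\cdot\|^2$ is not.
\end{itemize}

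Finally, keeping the $\mu$-term in your sufficiency computation gives the sharper bound $\varphi(y)\ge\varphi(z)+\frac{1}{2}\bigl(\mu+\frac{1}{\tau}\bigr)\|y-z\|^2$. This also yields uniqueness of the minimizer directly, with no separate appeal to strong convexity of $\varphi$.
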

	
	\begin{lemma}[\cite{chang2021golden}, Fact 2.2]\label{lem2}
		Let $\{u_n\}$ and $\{v_n\}$ be two real and nonnegative sequences. If, for some $\varepsilon \in (0, 1)$, 
		\[
		u_{n+1} \leq \varepsilon u_n + v_n
		\]
		for all $n \geq 1$ and $\sum_{n=1}^{\infty} v_n < \infty$, then $\sum_{n=1}^{\infty} u_n < \infty$.
	\end{lemma}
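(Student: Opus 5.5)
The claim is the sequence fact of Lemma \ref{lem2}: if $u_{n+1}\le \varepsilon u_n+v_n$ with $\varepsilon\in(0,1)$, $u_n,v_n\ge 0$, and $\sum v_n<\infty$, then $\sum u_n<\infty$. I will prove it by summing the recursion over a finite window and absorbing the $u$-terms that reappear on the right-hand side; no closed-form unrolling of the recursion is needed.

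Fix $N\ge 2$ and sum the inequality over $n=1,\dots,N-1$:
\[
\sum_{n=2}^{N} u_n \le \varepsilon \sum_{n=1}^{N-1} u_n + \sum_{n=1}^{N-1} v_n .
\]
Add $u_1$ to both sides. Then use nonnegativity of $u_N$ to bound $\sum_{n=1}^{N-1}u_n\le \sum_{n=1}^{N}u_n$, which gives
\[
S_N:=\sum_{n=1}^{N} u_n \le u_1 + \varepsilon S_N + \sum_{n=1}^{\infty} v_n .
\]
Since $S_N$ is a finite number and $\varepsilon<1$, the term $\varepsilon S_N$ can be moved to the left, so
\[
S_N\le \frac{u_1+\sum_{n\ge1} v_n}{1-\varepsilon},
\]
a bound independent of $N$. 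The partial sums $S_N$ are nondecreasing because $u_n\ge0$. Being bounded, they converge, and hence $\sum_{n=1}^\infty u_n<\infty$.

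There is no genuine obstacle in this argument. The only point needing care is that the absorption step requires $S_N$ to be finite, which is automatic for a finite sum of real numbers. That is why the argument works with truncated sums rather than directly with the infinite series.
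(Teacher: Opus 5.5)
Your proof is correct and complete. You sum the recursion over $n=1,\dots,N-1$ and add $u_1$. You then enlarge the right-hand side using $u_N\ge 0$ and $v_n\ge 0$, and absorb $\varepsilon S_N$, which is legitimate because $S_N$ is a finite real number. This gives the uniform bound $S_N\le (u_1+\sum_{n\ge 1}v_n)/(1-\varepsilon)$, and monotonicity of the partial sums finishes the argument.

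The paper does not prove this lemma at all; it imports it verbatim from \cite{chang2021golden}. So there is no in-paper route to compare against. The other standard route is to unroll the recursion to $u_{n+1}\le \varepsilon^{n}u_1+\sum_{k=1}^{n}\varepsilon^{n-k}v_k$, sum over $n$, and interchange the order of summation in the resulting double series. That yields exactly the same constant $(u_1+\sum_{n}v_n)/(1-\varepsilon)$, but it needs a Tonelli/Cauchy-product step that your truncated-sum absorption avoids.
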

	
	\begin{lemma}[\cite{w5}, Fact 2.3]\label{lem3}
		For any $x, y, z \in \mathbb{R}^n$ and $\alpha \in \mathbb{R}$, there hold
	\begin{equation}\label{eq2}
		2 \langle x - y, x - z \rangle = \| x - y \|^2 + \| x - z \|^2 - \| y - z \|^2,
	\end{equation}
	\begin{equation}\label{eq3}
		\| \alpha x + (1 - \alpha) y \|^2 = \alpha \| x \|^2 + (1 - \alpha) \| y \|^2 - \alpha(1 - \alpha) \| x - y \|^2.
	\end{equation}
	\end{lemma}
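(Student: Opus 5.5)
Both identities in Lemma \ref{lem3} are elementary consequences of expanding squared norms through the inner product, $\|u\|^2=\langle u,u\rangle$, together with bilinearity and symmetry of $\langle\cdot,\cdot\rangle$. No earlier result of the paper is needed. The plan is to verify each identity by direct expansion. To keep the bookkeeping short, the first one will be reduced to the polarization identity.

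For \eqref{eq2}, set $a = x-y$ and $b = x-z$. Then $y - z = (x-z)-(x-y) = b - a$. Expanding gives
\[
\|b-a\|^2 = \|a\|^2 + \|b\|^2 - 2\langle a,b\rangle .
\]
Solving for $2\langle a,b\rangle$ and substituting back yields $2\langle x-y,x-z\rangle = \|x-y\|^2+\|x-z\|^2-\|y-z\|^2$, which is exactly \eqref{eq2}.

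For \eqref{eq3}, expand the left-hand side:
\[
\|\alpha x+(1-\alpha)y\|^2=\alpha^2\|x\|^2+2\alpha(1-\alpha)\langle x,y\rangle+(1-\alpha)^2\|y\|^2 .
\]
Expand the subtracted term on the right: $\alpha(1-\alpha)\|x-y\|^2=\alpha(1-\alpha)\big(\|x\|^2-2\langle x,y\rangle+\|y\|^2\big)$. Then compare coefficients on the two sides:
\begin{itemize}
\item coefficient of $\|x\|^2$: $\alpha-\alpha(1-\alpha)=\alpha^2$;
\item coefficient of $\|y\|^2$: $(1-\alpha)-\alpha(1-\alpha)=(1-\alpha)^2$;
\item coefficient of $\langle x,y\rangle$: $2\alpha(1-\alpha)$ on both sides.
\end{itemize}
Since all three coefficients agree, the identity holds. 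No sign or range restriction on $\alpha$ is used, so it holds for every $\alpha\in\mathbb{R}$, as the statement claims.

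There is no real obstacle here. The only point needing care is the sign bookkeeping in the cross terms of \eqref{eq3}. Matching coefficients as above, rather than simplifying the expression as a whole, avoids sign errors there.
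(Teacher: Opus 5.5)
Your proof is correct: the reduction of the first identity to $\|b-a\|^2=\|a\|^2+\|b\|^2-2\langle a,b\rangle$ and the term-by-term expansion for the second (valid for every real $\alpha$) are both accurate. The paper gives no proof of its own and simply quotes the lemma as Fact 2.3 of the cited work of Chang et al.; direct expansion of the squared norms, as you do, is the standard verification.
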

	
	\subsection{Optimality condition and primal dual gap function.} 
	In view of Assumption \ref{ass1} and \ref{ass2}, there exists a saddle point $(x^*, y^*) \in \operatorname{dom}(F) \times \operatorname{dom}(H^*)$ of problem \eqref{prob1} that satisfies the first-order optimality conditions for the two optimization variables respectively, as follows.
	\begin{align*}
		0 \in \partial f(x^*) + \nabla h(x^*) + J_g(x^*)^\top y^* \quad \text{and} \quad 0 \in g(x^*) - \partial H^*(y^*).
	\end{align*}
	We denote the set of all the saddle points of $\mathcal{L}(\cdot,\cdot)$ by $\Omega$, i.e.,\\
	\begin{equation}\label{eq4}
		\Omega := \left\{(x^*,y^*)\in \operatorname{dom}(F)\times \operatorname{dom}(H^*) |  - \nabla h(x^*) - J_g(x^*)^\top y^* \in \partial f(x^*),\quad
		  g(x^*) \in \partial H^*(y^*)\right\}.
	\end{equation}
	To characterize the saddle points of problem \eqref{prob1}, we define the following primal dual gap function.
	\begin{align}\label{pdg}
		G(x, y) :=  \mathcal{L}(x, y^*) - \mathcal{L}(x^*, y), \quad (x^*,y^*)\in  \Omega.
	\end{align}
    This definition is also commonly adopted in the analysis of convergence rates, see e.g.,~\cite{w5,w11,w13} and 
 	it is clear that $G(x, y) \geq 0$ for any $(x, y) \in \operatorname{dom}(F) \times \operatorname{dom}(H^*)$. If $(\hat{x}, \hat{y})\in \Omega$, then $G(\hat{x}, \hat{y}) = 0$.
    
	\section{The proposed algorithms and convergence analysis}\label{section3}
	\textcolor{black}{In this section, we propose a new primal-dual algorithm with convex combination and extrapolation to solve problem \eqref{prob1}, and  construct a new Lyapunov potential function to establish its global convergence and $\mathcal{O}(1/N)$ ergodic convergence rate. Furthermore, we prove that the iterative sequence generated by PDAce converges linearly under strong convexity and strong concavity. }
    
	\subsection{Algorithm framework}
    \textcolor{black}{ 
    As described in Section \ref{section1}, to resolve the geometric mismatch and delayed information exchange, 
    we construct a convex combination point $\bar x_k$ in \eqref{eq10}. This point replaces $x_k$ not only in the inertial term but also in ${\Phi _x}({x^k},{y^k}) = {J_g}{({x^k})^ \top }{y^k}$ in the primal variable update \eqref{eq:primal}.
    Besides, we extend extrapolation to the mapping space with \eqref{eq:vupdate} and use it in the update of the dual variable \eqref{eq:dualupdate}. Based on this, we propose a new primal-dual algorithm with convex combination and extrapolation, which is described as follows.} 
    It is noted that the conditions for the stepsize, parameter $\omega$ and $\varpi$ are found in Remark \ref{rm2}.
    \begin{remark}[Parameter Admissibility and Existence]\label{rm2}
        To ensure the decrease of the Lyapunov potential function, the scaling weight parameter $\omega$ is chosen from a well-defined interval $[L_\omega,U_\omega]$. The other scaling weight parameter $\varpi$ satisfies $\varpi<(\sigma L_{xy})^{-1}$. The existence and admissibility of $\omega$ are illustrated in the following analysis.
        \begin{enumerate}
            \item The lower and upper bounds are explicitly defined as 
            $$L_{\omega} := \frac{\tau L_{xx}}{\frac{1}{\alpha} - \tau \frac{L_{xy}}{\varpi} - 2\tau L_h} \quad \text{and} \quad U_{\omega} := \frac{\alpha^2}{(1-\alpha)^2} \left( \frac{1}{\alpha \tau L_{xx}} - 1 \right).$$
            \item As the primal stepsize $\tau \to 0^{+}$, we observe that $L_{\omega} \to 0$ and $U_{\omega} \to +\infty$. 
            \item Consequently, there exists a threshold $\bar{\tau} > 0$ such that for any $\tau \in (0, \tau_{\max})$ with 
            \begin{equation}\label{eqt1}
                \tau_{\max} := \min \left\{ \frac{1}{\alpha L_{xx}}, \frac{1}{\alpha\big( \frac{L_{xy}}{\varpi} + 2L_h \big)}, \bar{\tau} \right\},
            \end{equation}
            the condition $0 \le L_{\omega} \le U_{\omega}$ is satisfied. Here, the threshold $\bar{\tau}$ depends on the specific choice of $\varpi$, and it can be theoretically solved by setting $L_{\omega} = U_{\omega}$, which corresponds to the smallest positive real root of the associated polynomial equation:
            \begin{equation}\label{eqt2}
                \tau^2 L_{xx}^2 (1-\alpha)^2 = (1 - \alpha \tau L_{xx}) \left( 1 - \alpha \tau \left( \frac{L_{xy}}{\varepsilon_2} + 2L_h \right) \right).
            \end{equation}
            
            Furthermore, from the upper bound $\varpi < (\sigma L_{xy})^{-1}$, the definition of $L_{\omega}$ and the second condition in the definition of $\tau_{\max}$, it is not difficult to see that there is an upper bound requirement on the product of $\sigma$ and $\tau$.
        \end{enumerate}
        The above analysis shows that there always exist valid choices for the scaling weight parameters, which provides a reliable basis for our later study of global convergence and convergence rate.
    \end{remark}
    
	\begin{algorithm}[H]
		\caption{Primal-Dual Algorithm with Convex combinations and Extrapolation (PDAce)}
		\label{alg:PDAce}
		\begin{algorithmic}[1]
			\State \textbf{Initialization:} Choose parameters $\alpha \in [\frac{\sqrt{5}-1}{2}, 1)$, $\omega \in [L_\omega, U_\omega]$, $\sigma \in (0,\frac{1}{\varpi L_{xy}})$, $\tau \in (0,\tau_{\max})$. Choose initial points $x^0\in\operatorname{dom}(F) , y^0\in \operatorname{dom}(H^*)$. Set $\bar{x}^{0} = \alpha x^0$ and $k=0$.
			\State \textbf{Main Iteration:}
			\For{$k = 0, 1, 2, \dots$}
			\State Step 1. Compute
			\begin{align}
				x^{k+1} &= \text{Prox}_{\tau f}\left(\bar{x}^{k} - \tau\big(\nabla h(x^k) + J_g(\bar{x}^{k})^\top y^k\big)\right). \label{eq:primal}
			\end{align}
			\State Step 2. Compute
			\begin{align}
				\bar{x}^{k+1} &= \alpha \bar{x}^{k} + (1-\alpha)x^{k+1}, \label{eq10} \\
				v^{k+1} &= g(\bar{x}^{k+1}) - \alpha\big(g(\bar{x}^{k}) - g(x^{k+1})\big), \label{eq:vupdate} \\
				y^{k+1} &= \text{Prox}_{\sigma H^*}\left(y^k + \sigma v^{k+1} \right) \label{eq:dualupdate}.
			\end{align}
			\EndFor
		\end{algorithmic}
	\end{algorithm}

    \begin{remark}
        When $\Phi(x,y)=\langle Kx,y\rangle$, i.e., $g(x)=Kx$, Algorithm \ref{alg:PDAce} reduces to the following form:
        \[
        \left\{
        \begin{aligned}
        x^{k+1} &= \mathrm{Prox}_{\tau f}\big( \overline{x}^k - \tau K^\top y^k \big), \\
        \overline{x}^{k+1} &= \alpha \overline{x}^k + (1-\alpha) x^{k+1}, \\
        v^{k+1} &= K \big(\overline{x}^{k+1} + \alpha\big( x^{k+1} - \overline{x}^k \big)\big), \\
        y^{k+1} &= \mathrm{Prox}_{\sigma H^*}\big( y^k + \sigma v^{k+1} \big).
        \end{aligned}
        \right.
        \]
        Using $\overline{x}^{k+1} = \alpha \overline{x}^k + (1-\alpha) x^{k+1}$, we derive
        \[
        v^{k+1} = K \big(\overline{x}^{k+1} + \alpha( x^{k+1} - \overline{x}^k )\big)
        = K\big(\alpha \overline{x}^k + (1-\alpha) x^{k+1} + \alpha( x^{k+1} - \overline{x}^k )\big)
        = Kx^{k+1}.
        \]
        Consequently, the update rule for $y^{k+1}$ reduces to $y^{k+1} = \mathrm{Prox}_{\sigma H^*}\big( y^k + \sigma Kx^{k+1} \big)$.
        It is then easy to see that the resulting iterative scheme is equivalent to \textup{(\ref{a3})}.
        This also demonstrates that the proposed algorithm PDAce is equivalent to GRPDA in the bilinear case.
        Thus, the stepsize requirement of Algorithm \ref{alg:PDAce} in the bilinear case is also identical to that of GRPDA, i.e.,
        \[
        \tau \sigma \|K\|^2 \le \frac{1}{\alpha} \leq \frac{\sqrt{5}+1}{2}.
        \]
        Further analysis of the stepsize requirement when the nonlinear problem reduces to the bilinear case is discussed in Remark \ref{rm5}.
    \end{remark}
    
	\begin{remark}
        The auxiliary variable $v^{k+1}$ defined in \eqref{eq:vupdate} is one of the core components of the proposed algorithm.
        Classical primal-dual algorithms such as PDA and GRPDA usually perform dual updates via variable extrapolation or directly compute the mapping function at current iterations.
        However, when dealing with nonlinear coupling term in the form $\langle g(x), y \rangle$, these schemes inevitably generate residual terms in convergence analysis, and such nonlinear residuals cannot be fully eliminated.
        
        A prominent innovation of this work is to migrate extrapolation operations from variable space to mapping space.
        By reasonably assembling the function values of $g(\cdot)$ at smoothed points $\bar{x}^{k+1}$, $\bar{x}^{k}$ and $x^{k+1}$, we construct a dedicated algebraic structure to characterize the nonlinear variation of gradients.
        Benefiting from this design, all nonlinear residuals can be completely eliminated in Lyapunov convergence analysis, which cannot be achieved by conventional variable-space extrapolation under nonlinear coupling. 
        
        Furthermore, the convex combination and extrapolation techniques adopted in our proposed algorithm have been simultaneously utilized for bilinear coupling saddle point problems in \textup{\cite{Chang2024ce}} and \textup{\cite{Chang2025ce}}, to which interested readers may refer for further details. 
        Detailed derivations can be found in the estimation of $A_k$ in Lemma \textup{\ref{lem4}}.
	\end{remark}
    
	\subsection{Basic properties of PDAce}
	
	\begin{lemma} \label{lem4}
		Let $\{(x^k, y^k, \bar{x}^k)\}_{k\in\mathbb{N}}$ be the sequence generated by Algorithm \ref{alg:PDAce} from any initial point $(x^0, y^0) \in \operatorname{dom}(F)\times \operatorname{dom}(H^*)$. Let $G(\cdot, \cdot)$ be defined as in \textup{(\ref{pdg})} with any fixed $(x^*, y^*) \in \Omega$ and $\beta = \sigma/\tau$. Then, it holds for any $(x, y) \in \operatorname{dom}(F)\times \operatorname{dom}(H^*)$ that
		\begin{equation}\label{eq20}
			\begin{split}
				\tau G(x^k, y^k) 
				&\leq \langle x^{k+1} - \bar{x}^{k}, x^* - x^{k+1} \rangle + \frac{1}{\beta} \langle y^k - y^{k-1}, y^* - y^k \rangle + \frac{1}{\alpha} \langle x^k - \bar{x}^{k}, x^{k+1} - x^k \rangle \\
				&\quad + \frac{\tau L_{xx}}{2}\left(1+\omega \frac{(1-\alpha)^2}{\alpha^2}\right)\|x^k-\overline{x}^k\|^2
                +\frac{\tau \varpi L_{xy}}{2}\|y^k-y^{k-1}\|^2 \\
                &\quad+\frac{\tau L_h}{2}\|x^k-x^{k-1}\|^2
                +\frac{\tau}{2}\left( \frac{L_{xx}}{\omega}+\frac{L_{xy}}{\varpi}+L_h \right)\|x^{k+1}-x^k\|^2.
			\end{split}
		\end{equation}
	\end{lemma}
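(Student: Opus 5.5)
We plan to prove Lemma \ref{lem4} by writing down the variational inequalities of the two proximal steps, shifted by one index, and then adding them. Lemma \ref{lem1} (with $\mu=0$) applied to \eqref{eq:primal} at test point $x^*$ gives
\[
\tau f(x^*) \ge \tau f(x^{k+1}) + \langle \bar{x}^k - x^{k+1}, x^* - x^{k+1}\rangle - \tau\langle \nabla h(x^k)+J_g(\bar{x}^k)^\top y^k, x^*-x^{k+1}\rangle .
\]
The same lemma at the previous iteration, with test point $x^{k+1}$ and multiplied by $1/\alpha$, yields the term $\frac1\alpha\langle x^k-\bar x^k, x^{k+1}-x^k\rangle$. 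Here we use \eqref{eq10} in the form $\bar{x}^{k}-\bar x^{k-1} = (1-\alpha)(x^k-\bar x^{k-1})$, so that $x^k - \bar{x}^{k-1} = \frac{1}{\alpha}(x^k-\bar x^k)$. This is the standard GRPDA trick and explains the $1/\alpha$ factor. Lemma \ref{lem1} for \eqref{eq:dualupdate} at step $k$ with test point $y^*$ gives
\[
\sigma H^*(y^*) \ge \sigma H^*(y^k) + \langle y^{k-1}+\sigma v^k - y^k, y^*-y^k\rangle,
\]
and dividing by $\beta$ produces $\frac1\beta\langle y^k-y^{k-1},y^*-y^k\rangle$ together with $\tau\langle v^k, y^*-y^k\rangle$. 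Summing these three inequalities, we then express $\tau G(x^k,y^k) = \tau[\mathcal L(x^k,y^*)-\mathcal L(x^*,y^k)]$ and collect everything that is not one of the three inner products into a remainder $\tau A_k$.

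Next we bound the smooth parts of $A_k$. The terms in $h$ are handled by convexity and the $L_h$-descent lemma:
\[
h(x^k)-h(x^*) \le \langle\nabla h(x^{k-1}),x^k-x^*\rangle + \tfrac{L_h}{2}\|x^k-x^{k-1}\|^2 .
\]
Mismatched gradient arguments such as $\nabla h(x^k)$ versus $\nabla h(x^{k-1})$ are absorbed via Young's inequality, giving the contributions $\frac{\tau L_h}{2}\|x^k-x^{k-1}\|^2 + \frac{\tau L_h}{2}\|x^{k+1}-x^k\|^2$.

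For the coupling we use \eqref{lipschitz}, i.e.\ convexity of $\langle g(\cdot),y\rangle$ plus the $L_{xx}$ upper bound, with linearization at $\bar x^k$ and at $\bar x^{k-1}$. Rewriting $\bar x^{k-1}$ through $x^k$ and $\bar x^k$ produces the factor $\frac{(1-\alpha)^2}{\alpha^2}\|x^k-\bar x^k\|^2$, and we weight it by $\omega$ in a Young split. The extrapolation $v^k = g(\bar x^k)-\alpha(g(\bar x^{k-1})-g(x^k))$ is designed so that, after applying \eqref{convex}, the values $\langle g(\bar x^k), y\rangle$, $\langle g(\bar x^{k-1}),y\rangle$ and $\langle g(x^k),y\rangle$ telescope against the linearizations $\langle J_g(\bar x^k)^\top y^k,\cdot\rangle$. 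What remains is a cross term of the form
\[
\langle J_g(\bar x^k)^\top y^k - J_g(\bar x^{k-1})^\top y^{k-1}, x^{k+1}-x^k\rangle .
\]
By Assumption \ref{ass2}(b) this is at most $(L_{xx}\|\bar x^k-\bar x^{k-1}\| + L_{xy}\|y^k-y^{k-1}\|)\|x^{k+1}-x^k\|$. Two Young inequalities, one with weight $\omega$ for the $L_{xx}$ part and one with weight $\varpi$ for the $L_{xy}$ part, then give exactly $\frac{\tau L_{xx}}{2}(\omega\frac{(1-\alpha)^2}{\alpha^2}\|x^k-\bar x^k\|^2 + \frac1\omega\|x^{k+1}-x^k\|^2)$ and $\frac{\tau}{2}(\varpi L_{xy}\|y^k-y^{k-1}\|^2+\frac{L_{xy}}{\varpi}\|x^{k+1}-x^k\|^2)$. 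The remaining $\frac{\tau L_{xx}}{2}\|x^k-\bar x^k\|^2$ comes from \eqref{lipschitz} applied between $x^k$ and $\bar x^k$.

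The main obstacle is the bookkeeping of the $g$-terms, that is, showing that every nonlinear residual cancels through the choice of $v^k$ and \eqref{convex}, leaving only the Jacobian-difference cross term. We would do this first, writing $A_k$ explicitly and grouping all $\langle g(\cdot),y^*\rangle$ and $\langle g(\cdot),y^k\rangle$ terms before applying any inequality. Any leftover sign-definite term (for example $\langle g(x^k)-g(\bar x^k)-J_g(\bar x^k)(x^k-\bar x^k),y\rangle\ge 0$) would be dropped using $y^*\in\operatorname{dom}H^*$ together with the convexity of $\Phi(\cdot,y)$.
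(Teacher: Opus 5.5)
\textbf{Verdict: there are two genuine gaps.} Your three proximal inequalities are the right ones. One correction there: the factor $\frac1\alpha$ must come from the identity $x^k-\bar x^{k-1}=\frac1\alpha(x^k-\bar x^k)$ alone. Actually rescaling the step-$(k-1)$ inequality by $\frac1\alpha$ would stop $f(x^{k+1})$ from cancelling.

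The first gap is your treatment of $h$. Add your bound $h(x^k)-h(x^*)\le\langle\nabla h(x^{k-1}),x^k-x^*\rangle+\frac{L_h}{2}\|x^k-x^{k-1}\|^2$ to the terms $\langle\nabla h(x^k),x^*-x^{k+1}\rangle+\langle\nabla h(x^{k-1}),x^{k+1}-x^k\rangle$ coming from the two primal prox inequalities. What is left is $\langle\nabla h(x^k)-\nabla h(x^{k-1}),x^*-x^{k+1}\rangle+\frac{L_h}{2}\|x^k-x^{k-1}\|^2$. The gradient difference is paired with $x^*-x^{k+1}$, not with a difference of consecutive iterates, so no Young inequality produces $\frac{L_h}{2}\|x^{k+1}-x^k\|^2$. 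The $\frac{L_h}{2}\|x^k-x^{k-1}\|^2$ budget is also already spent. The paper instead uses plain convexity at $x^k$, $h(x^k)-h(x^*)\le\langle\nabla h(x^k),x^k-x^*\rangle$. Then the three gradient terms collapse to $\langle\nabla h(x^k)-\nabla h(x^{k-1}),x^k-x^{k+1}\rangle$, and one Young inequality gives exactly the two $L_h$-terms of \eqref{eq20}.

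The second gap is the coupling bookkeeping you defer; it does not close as you (and the paper) claim. Your Young split of the cross term matches the paper; the trouble is the remainder. Grouping exactly, with $D_k:=\alpha g(\bar x^{k-1})+(1-\alpha)g(x^k)-g(\bar x^k)$,
\begin{align*}
A_k&=\langle J_g(\bar x^k)^\top y^k-J_g(\bar x^{k-1})^\top y^{k-1},x^k-x^{k+1}\rangle\\
&\quad+\langle g(x^k)-g(x^*)-J_g(\bar x^k)(x^k-x^*),y^k\rangle+\langle D_k,y^*-y^k\rangle .
\end{align*}
\begin{itemize}
\item The middle term is at most $\frac{L_{xx}}{2}\|x^k-\bar x^k\|^2$ by \eqref{lipschitz} and convexity at $\bar x^k$.
\item The part $-\langle D_k,y^k\rangle$ is $\le0$ by \eqref{convex}.
\item The part $+\langle D_k,y^*\rangle$ is the Jensen gap of $\langle g(\cdot),y^*\rangle$, which \eqref{convex} shows is nonnegative. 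It enters with a plus sign, so it cannot be dropped; your closing sentence has this sign backwards. Smoothness only bounds it by $\frac{L_{xx}}{2}\alpha(1-\alpha)\|x^k-\bar x^{k-1}\|^2=\frac{L_{xx}(1-\alpha)}{2\alpha}\|x^k-\bar x^k\|^2$.
\end{itemize}
Even after splitting $-\langle D_k,y^k\rangle$ against the middle term, the sharp bound for $A_k$ minus the cross term is $\frac{L_{xx}}{2}\bigl(1+\frac{(1-\alpha)^2}{\alpha}\bigr)\|x^k-\bar x^k\|^2$.

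This bound is attained. Take $n=1$, $m=2$, $\operatorname{dom}H^*$ the unit simplex, $y^k=e_1$, $y^*=e_2$, $g_1(t)=\frac{L_{xx}}{2}\max\{t-\bar x^k,0\}^2$, $g_2(t)=\frac{L_{xx}}{2}t^2$, and $x^*\le\bar x^k<x^k$. If in addition $x^{k+1}=x^k$ and $y^{k-1}=y^k$, this even violates the complete estimate of $A_k$ behind \eqref{eq20} whenever $\omega<\alpha$. So your route yields \eqref{eq20} only with the leading $1$ in the coefficient of $\|x^k-\bar x^k\|^2$ replaced by $1+\frac{(1-\alpha)^2}{\alpha}$.

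The paper's proof does not fill this gap. In its estimate of $A_k$ it replaces $\langle g(\bar x^k),y^k-y^*\rangle$ by $\alpha\langle g(\bar x^{k-1}),y^k-y^*\rangle+(1-\alpha)\langle g(x^k),y^k-y^*\rangle$. That applies \eqref{convex} with the weight $y^k-y^*$, for which no convexity is available. The rearrangement before that step, labelled an equality, is also not an identity.
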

    
	\begin{proof}
		By virtue of Lemma \ref{lem1}, the following three relations hold.
		\begin{equation}\label{eq24}
			\begin{split}
				f(x^{k+1}) - f(x^*) \leq \left\langle \nabla h(x^k) + J_g(\bar{x}^{k})^\top y^k,\ x^* - x^{k+1} \right\rangle
				 + \frac{1}{\tau} \left\langle x^{k+1} - \bar{x}^{k},\ x^* - x^{k+1} \right\rangle,
			\end{split}
		\end{equation}
		\begin{equation}\label{eq25}
			\begin{split}
				f(x^k) - f(x^{k+1}) \leq \left\langle \nabla h(x^{k-1}) + J_g(\bar{x}^{k-1})^\top y^{k-1},\ x^{k+1} - x^k \right\rangle 
				 + \frac{1}{\tau} \left\langle x^k - \bar{x}^{k-1},\ x^{k+1} - x^k \right\rangle,
			\end{split}
		\end{equation}
		\begin{equation}\label{eq23}
			\begin{split}
				H^*(y^k) - H^*(y^*) \leq \left\langle g(\bar{x}^{k}) - \alpha\left(g(\bar{x}^{k-1}) - g(x^k)\right),\ y^k - y^* \right\rangle + \frac{1}{\sigma} \left\langle y^k - y^{k-1},\ y^* - y^k \right\rangle.
			\end{split}
		\end{equation}
		By the convexity of the function $h(x)$, it follows that
		\begin{equation}\label{eq26}
			h(x^k) - h(x^*) \leq \left\langle \nabla h(x^k),\ x^k - x^* \right\rangle.
		\end{equation}
		Summing inequalities (\ref{eq24})--(\ref{eq26}), we obtain
		\[
		\begin{aligned}
			& \quad f(x^k) - f(x^*) + h(x^k) - h(x^*) + H^*(y^k) - H^*(y^*) \\
			&\leq \left\langle \nabla h(x^k) + J_g(\bar{x}^{k})^\top y^k,\ x^* - x^{k+1} \right\rangle + \left\langle \nabla h(x^{k-1}) + J_g(\bar{x}^{k-1})^\top y^{k-1},\ x^{k+1} - x^k \right\rangle \\
			&\quad + \frac{1}{\tau} \left\langle x^{k+1} - \bar{x}^{k},\ x^* - x^{k+1} \right\rangle + \left\langle g(\bar{x}^{k}),\ y^k - y^* \right\rangle- \alpha \left\langle g(\bar{x}^{k-1}) - g(x^k),\ y^k - y^* \right\rangle \\
			&\quad + \frac{1}{\tau} \left\langle x^k - \bar{x}^{k-1},\ x^{k+1} - x^k \right\rangle + \frac{1}{\sigma} \left\langle y^k - y^{k-1},\ y^* - y^k \right\rangle \\
			&\quad + \left\langle \nabla h(x^k),\ x^k - x^* \right\rangle.
		\end{aligned}
		\]
        Adding $\left\langle g(x^k), y^* \right\rangle - \left\langle g(x^*), y^k \right\rangle$ to both sides of the above inequality, we obtain
        \[
		\begin{aligned}
			& \quad f(x^k) - f(x^*) + h(x^k) - h(x^*) + H^*(y^k) - H^*(y^*) + \left\langle g(x^k),\ y^* \right\rangle 
			- \left\langle g(x^*),\ y^k \right\rangle\\
			&\leq \left\langle \nabla h(x^k) + J_g(\bar{x}^{k})^\top y^k,\ x^* - x^{k+1} \right\rangle + \left\langle \nabla h(x^{k-1}) + J_g(\bar{x}^{k-1})^\top y^{k-1},\ x^{k+1} - x^k \right\rangle \\
			&\quad + \frac{1}{\tau} \left\langle x^{k+1} - \bar{x}^{k},\ x^* - x^{k+1} \right\rangle + \left\langle g(\bar{x}^{k}),\ y^k - y^* \right\rangle- \alpha \left\langle g(\bar{x}^{k-1}) - g(x^k),\ y^k - y^* \right\rangle \\
			&\quad + \frac{1}{\tau} \left\langle x^k - \bar{x}^{k-1},\ x^{k+1} - x^k \right\rangle + \frac{1}{\sigma} \left\langle y^k - y^{k-1},\ y^* - y^k \right\rangle \\
			&\quad + \left\langle \nabla h(x^k),\ x^k - x^* \right\rangle + \left\langle g(x^k),\ y^* \right\rangle 
			- \left\langle g(x^*),\ y^k \right\rangle.
		\end{aligned}
		\]
		Using the definition of the primal dual gap function defined in (\ref{pdg}), the above inequality can be rewritten in the form
		\begin{equation}\label{eq21}
			\begin{split}
				& \qquad G(x^k,y^k)\\
				&\leq \frac{1}{\tau} \left\langle x^{k+1} - \bar{x}^{k},\ x^* - x^{k+1} \right\rangle + \frac{1}{\tau} \left\langle x^k - \bar{x}^{k-1},\ x^{k+1} - x^k  \right\rangle + \frac{1}{\sigma} \left\langle y^k - y^{k-1},\ y^* - y^k \right\rangle \\
				&\quad + \left\langle J_g(\bar{x}^{k})^\top y^k,\ x^* - x^{k+1} \right\rangle 
    			+ \left\langle J_g(\bar{x}^{k-1})^\top y^{k-1},\ x^{k+1} - x^k \right\rangle \\
    			&\quad+ \left\langle g(\bar{x}^{k}),\ y^k - y^* \right\rangle 
    			+ \left\langle g(x^k),\ y^* \right\rangle 
    			- \left\langle g(x^*),\ y^k \right\rangle \\
    			&\quad- \alpha \left\langle g(\bar{x}^{k-1}) - g(x^k),\ y^k - y^* \right\rangle \\
                &\quad+ \left\langle \nabla h(x^k),\ x^* - x^{k+1} \right\rangle + \left\langle \nabla h(x^{k-1}),\ x^{k+1} - x^k \right\rangle + \left\langle \nabla h(x^k),\ x^k - x^* \right\rangle.
				\end{split}
		\end{equation}
		We denote that 
		\[
		\begin{aligned}
			A_k ={}& \left\langle J_g(\bar{x}^{k})^\top y^k,\ x^* - x^{k+1} \right\rangle 
			+ \left\langle J_g(\bar{x}^{k-1})^\top y^{k-1},\ x^{k+1} - x^k \right\rangle \\
			&+ \left\langle g(\bar{x}^{k}),\ y^k - y^* \right\rangle 
			+ \left\langle g(x^k),\ y^* \right\rangle 
			- \left\langle g(x^*),\ y^k \right\rangle \\
			&- \alpha \left\langle g(\bar{x}^{k-1}) - g(x^k),\ y^k - y^* \right\rangle,\\
			B_k ={}& \left\langle \nabla h(x^k),\ x^* - x^{k+1} \right\rangle + \left\langle \nabla h(x^{k-1}),\ x^{k+1} - x^k \right\rangle + \left\langle \nabla h(x^k),\ x^k - x^* \right\rangle.
		\end{aligned}
		\]
		Multiplying both sides of inequality (\ref{eq21}) by $\tau$, and using $x^k - \bar{x}^{k-1} = \frac{1}{\alpha}(x^k - \bar{x}^{k})$ together with $\beta = \frac{\sigma}{\tau}$, we obtain
		\begin{equation}\label{eq22}
			\begin{split}
				& \qquad \tau G(x^k,y^k)\\
				&\leq \left\langle x^{k+1} - \bar{x}^{k},\ x^* - x^{k+1} \right\rangle + \frac{1}{\alpha} \left\langle x^k - \bar{x}^{k},\ x^{k+1} - x^k  \right\rangle + \frac{1}{\beta} \left\langle y^k - y^{k-1},\ y^* - y^k \right\rangle \\
				&\quad + \tau(A_k + B_k),
			\end{split}
		\end{equation}
        First, we first derive an upper bound for $A_k$ as follows
		\[
		\begin{aligned}
			A_k ={}& \left\langle J_g(\bar{x}^{k})^\top y^k,\ x^* - x^{k+1} \right\rangle 
			+ \left\langle J_g(\bar{x}^{k-1})^\top y^{k-1},\ x^{k+1} - x^k \right\rangle \\
			&+ \left\langle g(\bar{x}^{k}),\ y^k - y^* \right\rangle - \alpha \left\langle g(\bar{x}^{k-1}) - g(x^k),\ y^k - y^* \right\rangle \\
			& + \left\langle g(x^k),\ y^* \right\rangle 
			- \left\langle g(x^*),\ y^k \right\rangle\\
            ={}& \left\langle J_g(\bar{x}^k)^\top y^k,\ x^* - x^{k+1} \right\rangle + \left\langle g(\bar{x}^k),\ y^k \right\rangle - \left\langle g(x^*),\ y^k \right\rangle \\
			&+ \left\langle J_g(\bar{x}^k)^\top y^k,\ \bar{x}^{k-1} - x^k \right\rangle
			- \left\langle g(\bar{x}^k),\ y^k \right\rangle \\
			&+ \left\langle J_g(\bar{x}^k)^\top y^k - J_g(\bar{x}^{k-1})^\top y^k,\ x^k - x^{k+1} \right\rangle\\
			&+ \left\langle g(\bar{x}^k),\ y^k - y^* \right\rangle
			+ \left\langle g(x^k),\ y^* \right\rangle \\
			&- \alpha \left\langle g(\bar{x}^{k-1}) - g(x^k),\ y^k - y^* \right\rangle\\
			\leq {}& \frac{L_{xx}}{2} \left\| x^k - \bar{x}^k \right\|^2 - \left\langle g(x^k),\ y^k \right\rangle \\
			& + \left\langle J_g(\bar{x}^k)^\top y^k - J_g(\bar{x}^{k-1})^\top y^{k-1},\ x^k - x^{k+1} \right\rangle \\
			& + \alpha \left\langle g(\bar{x}^{k-1}),\ y^k - y^* \right\rangle - \alpha \left\langle g(x^k),\ y^k - y^* \right\rangle \\
			& + \left\langle g(x^k),\ y^k \right\rangle 
			- \left\langle g(x^k),\ y^* \right\rangle 
			+ \left\langle g(x^k),\ y^* \right\rangle \\
			& - \alpha \left\langle g(\bar{x}^{k-1}) - g(x^k),\ y^k - y^* \right\rangle\\
			\le{}& \frac{L_{xx}}{2}\|x^k-\overline{x}^k\|^2 + \frac{\omega}{2}L_{xx}\|x^k-\overline{x}^{k-1}\|^2
            + \frac{\varpi}{2}L_{xy}\|y^k-y^{k-1}\|^2 + \left( \frac{L_{xx}}{2\omega}+\frac{L_{xy}}{2\varpi} \right)\|x^{k+1}-x^k\|^2 \\
            ={}& \frac{L_{xx}}{2} \left( 1 + \omega \frac{(1-\alpha)^2}{\alpha^2} \right)\|x^k-\overline{x}^k\|^2
            + \frac{\varpi}{2}L_{xy}\|y^k-y^{k-1}\|^2 + \left( \frac{L_{xx}}{2\omega}+\frac{L_{xy}}{2\varpi} \right)\|x^{k+1}-x^k\|^2.
		\end{aligned}
		\]
		Here the last inequality follows from the relation $\bar{x}^k - \bar{x}^{k-1} = \frac{1-\alpha}{\alpha} \bigl( x^k - \bar{x}^k \bigr)$. The scaling weight coefficients $\omega$ and $\varpi$ are introduced via Young’s inequality.\\
		
		Next, we now turn to the term $B_k$. By decomposition and estimation, we have
		\[
		\begin{aligned}
			B_k ={}& \left\langle \nabla h(x^k),\ x^* - x^{k+1} \right\rangle + \left\langle \nabla h(x^{k-1}),\ x^{k+1} - x^k \right\rangle + \left\langle \nabla h(x^k),\ x^k - x^* \right\rangle \\
            ={}& \left\langle \nabla h(x^k),\ x^* - x^k \right\rangle
			+ \left\langle \nabla h(x^k),\ x^k - x^* \right\rangle
			+ \left\langle \nabla h(x^k) - \nabla h(x^{k-1}),\ x^k - x^{k+1} \right\rangle\\
			\leq {}& \frac{L_h}{2}\left\| x^k - x^{k-1} \right\|^2 + \frac{L_h}{2} \left\| x^{k+1} - x^k \right\|^2.
		\end{aligned}
		\]
		This inequality is derived from Young's inequality.
        Then substituting the bounds of $A_k$ and $B_k$ back into \eqref{eq22} completes the proof of the lemma.
		
	\end{proof}
	
	\begin{remark}
		In the bound for $A_k$, the second equality merely rearranges the terms within $A_k$, while the first inequality uses the convexity of the coupling function and the Lipschitz continuity of the gradient, i.e., \eqref{convex} and \eqref{lipschitz}. The second inequality applies Cauchy-Schwarz inequality and Young's inequality together with the Lipschitz continuity of the gradient of the coupling function.

        Note that, due to the special design of $v^{k+1}$ in the mapping space, the difficult cross-terms arising from the nonlinear coupling are completely absorbed by the extrapolation term involving the convex combination parameter $\alpha$. This algebraic cancellation is the key mechanism ensuring the potential function decreases monotonically, allowing us to bypass the theoretical obstacles that typically hinder convergence analysis of nonlinear saddle point problems.
	\end{remark}
	
	For any $(x^*,y^*)\in \Omega$, we define 
	\begin{equation}\label{eq30}
		\left\{
		\begin{aligned}
        a_k &= \frac1\beta \|y^k-y^*\|^2 + \frac1{1-\alpha}\|\overline{x}^{k+1}-x^*\|^2 + \tau L_h\|x^{k+1}-x^k\|^2,\\[4pt]
        b_k &= \left(\frac{1}{\beta} - \tau \varpi L_{xy}\right)\|y^k-y^{k-1}\|^2 + \frac{1}{\alpha}\left(1-\tau \alpha L_{xx}\left(1+\omega\frac{(1-\alpha)^2}{\alpha^2}\right)\right)\|x^k-\overline{x}^k\|^2 \\
        &\quad +\left(1+\alpha-\frac{1}{\alpha}\right)\|x^{k+1}-\overline{x}^k\|^2 + \left(\frac1\alpha - \tau\left(\frac{L_{xx}}{\omega}+\frac{L_{xy}}{\varpi}+2L_h\right)\right)\|x^{k+1}-x^k\|^2.
        \end{aligned}
		\right.
	\end{equation}
	
	\begin{lemma}\label{lem5}
		For all $k \geq 1$, there holds $a_{k+1} + 2\tau_k G(x^k, y^k) \leq a_k - b_k$, where $a_k$ and $b_k$ are defined in \textup{(\ref{eq30})}.
	\end{lemma}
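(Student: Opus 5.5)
We start from Lemma \ref{lem4}, multiply \eqref{eq20} by $2$, and convert each of the three inner-product terms on its right-hand side into differences of squared norms using the identities of Lemma \ref{lem3}. The term $\tau_k$ in the statement is read as the constant stepsize $\tau$.

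\textbf{Dual inner product.} By \eqref{eq2},
\[
2\langle y^k-y^{k-1},y^*-y^k\rangle=\|y^{k-1}-y^*\|^2-\|y^k-y^*\|^2-\|y^k-y^{k-1}\|^2 .
\]
After division by $\beta$, this produces the telescoping dual part of $a_k$ (shifted by one index). It also produces $-\frac1\beta\|y^k-y^{k-1}\|^2$, which is partly cancelled by $+\tau\varpi L_{xy}\|y^k-y^{k-1}\|^2$ and gives the first term of $b_k$. The index shift $y^{k-1}$ versus $y^k$ will be reconciled by the indexing convention of $a_k$: the dual term in $a_{k+1}$ must match $\|y^k-y^*\|^2$ on the left after shifting.

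\textbf{Primal inner product.} By \eqref{eq2},
\[
2\langle x^{k+1}-\bar x^k,x^*-x^{k+1}\rangle=\|\bar x^k-x^*\|^2-\|x^{k+1}-x^*\|^2-\|x^{k+1}-\bar x^k\|^2 .
\]
To obtain the $\bar x$ potential, write $x^{k+1}=\frac{1}{1-\alpha}\bar x^{k+1}-\frac{\alpha}{1-\alpha}\bar x^k$ and apply \eqref{eq3} with the (non-convex) coefficients $\frac1{1-\alpha}$ and $-\frac{\alpha}{1-\alpha}$. This gives
\[
\|x^{k+1}-x^*\|^2=\tfrac1{1-\alpha}\|\bar x^{k+1}-x^*\|^2-\tfrac{\alpha}{1-\alpha}\|\bar x^k-x^*\|^2+\tfrac{\alpha}{(1-\alpha)^2}\|\bar x^{k+1}-\bar x^k\|^2 .
\]
The last term equals $\alpha\|x^{k+1}-\bar x^k\|^2$, because $\bar x^{k+1}-\bar x^k=(1-\alpha)(x^{k+1}-\bar x^k)$. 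Combining, the coefficient of $\|\bar x^k-x^*\|^2$ becomes $1+\frac{\alpha}{1-\alpha}=\frac1{1-\alpha}$, which produces the telescoping pair $\frac1{1-\alpha}\|\bar x^k-x^*\|^2-\frac1{1-\alpha}\|\bar x^{k+1}-x^*\|^2$. The leftover is $-(1+\alpha)\|x^{k+1}-\bar x^k\|^2$.

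\textbf{Third inner product.} By \eqref{eq2},
\[
\tfrac{2}{\alpha}\langle x^k-\bar x^k,x^{k+1}-x^k\rangle=\tfrac1\alpha\bigl(\|x^{k+1}-\bar x^k\|^2-\|x^k-\bar x^k\|^2-\|x^{k+1}-x^k\|^2\bigr).
\]
Adding this to the leftover from the primal step gives $-(1+\alpha-\frac1\alpha)\|x^{k+1}-\bar x^k\|^2$, which is nonpositive exactly because $\alpha\ge\frac{\sqrt5-1}{2}$. It also gives $-\frac1\alpha\|x^k-\bar x^k\|^2$, which combines with the $L_{xx}$ term of \eqref{eq20} to form the second term of $b_k$, and $-\frac1\alpha\|x^{k+1}-x^k\|^2$.

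\textbf{Remaining $L_h$ terms.} The term $\tau L_h\|x^k-x^{k-1}\|^2$ on the right-hand side is absorbed into the telescoping quantity $\tau L_h\|x^{k+1}-x^k\|^2$ of $a_k$. Concretely, write $\tau L_h\|x^{k+1}-x^k\|^2=2\tau L_h\|x^{k+1}-x^k\|^2-\tau L_h\|x^{k+1}-x^k\|^2$. The negative part pairs telescopically with $+\tau L_h\|x^k-x^{k-1}\|^2$, and the positive part explains the $2L_h$ in the last coefficient of $b_k$.

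Collecting all terms should give exactly $a_{k+1}+2\tau G\le a_k-b_k$, provided each term of $a_k$ is placed at the correct index. \textbf{The main obstacle is this index bookkeeping.} Lemma \ref{lem4} pairs $y^k,y^{k-1}$ with $\bar x^k,x^{k+1}$, and $a_k$ is defined with mixed indices ($y^k$ alongside $\bar x^{k+1}$). If the shift does not match verbatim, I would first re-index Lemma \ref{lem4} by applying the prox inequality for $y^{k+1}$, so that all quantities align with $a_k\to a_{k+1}$. Failing that, I would redefine the dual part of $a_k$ with the appropriate shift; the inequality structure is unaffected.

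Nonnegativity of $b_k$ is not needed for this lemma. It follows from Remark \ref{rm2}, since $\omega\in[L_\omega,U_\omega]$ and $\sigma\varpi L_{xy}<1$ make every coefficient of $b_k$ nonnegative.
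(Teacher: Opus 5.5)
Your algebra is the paper's own computation. The three uses of \eqref{eq2}, the identity \eqref{eq32} obtained from \eqref{eq3} with coefficients $\frac1{1-\alpha}$ and $-\frac{\alpha}{1-\alpha}$, the resulting coefficient $-(1+\alpha-\frac1\alpha)$, and the extra $\tau L_h\|x^{k+1}-x^k\|^2$ that produces the $2L_h$ in $b_k$ are all correct.

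The gap is the step you deferred, and neither repair you propose closes it. After collecting terms, your left-hand side is
$2\tau G(x^k,y^k)+\frac1\beta\|y^k-y^*\|^2+\frac1{1-\alpha}\|\bar x^{k+1}-x^*\|^2+\tau L_h\|x^{k+1}-x^k\|^2=2\tau G(x^k,y^k)+a_k$.
The telescoping part of the right-hand side is exactly $a_{k-1}$. So what you actually prove is
\[
a_k+2\tau G(x^k,y^k)\le a_{k-1}-b_k \quad (k\ge1), \qquad\text{equivalently}\qquad a_{k+1}+2\tau G(x^{k+1},y^{k+1})\le a_k-b_{k+1}.
\]
The mixed indices in $a_k$ ($y^k$ together with $\bar x^{k+1}$ and $x^{k+1}-x^k$) are already mutually consistent, so shifting only the dual part would break the match. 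Nor can any use of the prox step for $y^{k+1}$ retain $G(x^k,y^k)$: $H^*(y^k)$ is controlled only by the prox step that produces $y^k$, and that step necessarily brings in $y^{k-1}$.

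The inequality as printed is in fact false in general. Take $n=m=1$ and $f=h=g=0$, so $L_{xx}=L_{xy}=0$ and $\sigma$ is unrestricted. Let $H^*(y)=\frac12y^2$, $x^0=0$, $y^0\neq0$, and take the saddle point $(x^*,y^*)=(0,0)$. Then $x^k=\bar x^k=0$ and $y^k=(1+\sigma)^{-k}y^0$, so
$a_k=\frac1\beta(y^k)^2$, $b_k=\frac1\beta(y^k-y^{k-1})^2$ and $2\tau G(x^k,y^k)=\tau(y^k)^2$.
Multiply by $\beta/(y^k)^2$ and use $\beta\tau=\sigma$. The printed inequality becomes $(1+\sigma)^{-2}+\sigma\le1-\sigma^2$, which fails for $\sigma=\frac12$. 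The shifted one becomes $1+\sigma\le1+2\sigma$, which holds.

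The paper's proof makes the same silent shift. Its \eqref{eq33} plus $\tau L_h\|x^{k+1}-x^k\|^2$ is precisely the shifted inequality, and the constant displayed in Theorem \ref{thm2} is $a_0$, not $a_1$. So you should conclude with the shifted statement, or shift all three components of $a_k$ down by one index, as the paper does with $U_k$ in Theorem \ref{thm3}. That version is all that Theorems \ref{thm1} and \ref{thm2} actually use.
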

	\begin{proof}
		Fix $k\geq 1$ arbitrarily. From \eqref{eq2} in Lemma \ref{lem3} and \eqref{eq20} in Lemma \ref{lem4}, we obtain
		\begin{equation}
			\begin{aligned}\label{eq31}
				&\quad 2\tau G(x^k, y^k) \\
				& + \frac{1}{\beta} \left\| y^k - y^{k-1} \right\|^2
				+ \frac{1}{\beta} \left\| y^k - y^* \right\|^2
				- \frac{1}{\beta} \left\| y^{k-1} - y^* \right\|^2 \\
				& + \left\| x^{k+1} - \bar{x}^k \right\|^2
				+ \left\| x^{k+1} - x^* \right\|^2
				- \left\| \bar{x}^k - x^* \right\|^2 \\
				& + \frac{1}{\alpha} \left\| x^k - \bar{x}^k \right\|^2
				+ \frac{1}{\alpha} \left\| x^{k+1} - x^k \right\|^2
				- \frac{1}{\alpha} \left\| \bar{x}^k - x^{k+1} \right\|^2\\
				\leq {}&\tau L_{xx}\left(1+\omega\cdot\frac{(1-\alpha)^2}{\alpha^2}\right)\|x^k-\overline{x}^k\|^2
                +\tau \varpi L_{xy}\|y^k-y^{k-1}\|^2 \\
                &+\tau L_h\|x^k-x^{k-1}\|^2
                +\tau\left( \frac{L_{xx}}{\omega}+\frac{L_{xy}}{\varpi}+L_h \right)\|x^{k+1}-x^k\|^2.
			\end{aligned}
		\end{equation}
		Since $x^{k+1} = \frac{1}{1 - \alpha}\bar{x}^{k+1} - \frac{\alpha}{1 - \alpha}\bar{x}^k$, which follow from (\ref{eq10}), we deduce from \eqref{eq3} in Lemma \ref{lem3} that
		\begin{equation}\label{eq32}
			\begin{aligned}
				\left\| x^{k+1} - x^* \right\|^2 =& 
				\frac{1}{1-\alpha} \left\| \bar{x}^{k+1} - x^* \right\|^2 - 
				\frac{\alpha}{1-\alpha} \left\| \bar{x}^k - x^* \right\|^2 + \frac{\alpha}{(1 - \alpha)^2}\left\| \bar{x}^{k+1} - \bar{x}^k \right\|^2\\
				=& \frac{1}{1-\alpha} \left\| \bar{x}^{k+1} - x^* \right\|^2 - 
				\frac{\alpha}{1-\alpha} \left\| \bar{x}^k - x^* \right\|^2 + \alpha \left\| x^{k+1} - \bar{x}^k \right\|^2,
			\end{aligned}
		\end{equation}
		where the second equality is due to $\bar{x}^{k+1}-\bar{x}^k = \frac{1}{1-\alpha}(x^{k+1}-\bar{x}^k)$. Substituting (\ref{eq32}) back into (\ref{eq31}) and rearranging (\ref{eq31}), we obtain
		\begin{equation}\label{eq33}
			\begin{aligned}
				& \qquad 2\tau G(x^k, y^k)
				+ \frac{1}{\beta} \left\| y^k - y^* \right\|^2
				+ \frac{1}{1-\alpha} \left\| \bar{x}^{k+1} - x^* \right\|^2 \\
				& \leq \frac{1}{\beta} \left\| y^{k-1} - y^* \right\|^2
				+ \frac{1}{1-\alpha} \left\| \bar{x}^k - x^* \right\|^2 \\
				& \quad -\left(\frac1\beta - \tau \varpi L_{xy}\right)\|y^k-y^{k-1}\|^2
                -\left(1+\alpha-\frac1\alpha\right)\|x^{k+1}-\overline{x}^k\|^2 \\
                & \quad -\frac1\alpha\left(1-\tau \alpha L_{xx}\left(1+\omega\frac{(1-\alpha)^2}{\alpha^2}\right)\right)\|x^k-\overline{x}^k\|^2 \\
                & \quad -\left(\frac1\alpha - \tau\left(\frac{L_{xx}}{\omega}+\frac{L_{xy}}{\varpi} + L_h \right) \right)\|x^{k+1}-x^k\|^2
                +\tau L_h\|x^k-x^{k-1}\|^2.
			\end{aligned}
		\end{equation}
		Adding $\tau L_h\left\| x^{k+1} - x^k \right\|^2$ to both sides of inequality (\ref{eq33}) and taking into account $a_k$ and $b_k$ defined in (\ref{eq30}), we obtain the desired result $a_{k+1} + 2\tau_k G(x^k, y^k) \leq a_k - b_k$.
		
		By the choice of parameters $\alpha, \tau, \sigma, \varpi$ and $\omega$ in PDAce, it can be deduced that all coefficients of $b_k$ are nonnegative. This implies the monotonic decrease of the sequence $\{a_k\}$.
        
	\end{proof}

    \begin{remark}[Importance of the Lyapunov Potential Function in (\ref{eq30})]
        The construction of the sequence $\{a_k\}$ and the descent inequality established in Lemma \ref{lem5} serve as the theoretical cornerstone for our convergence analysis. In classical primal-dual frameworks, addressing the nonlinear coupling term $\Phi(x,y) = \langle g(x),y \rangle$ inevitably introduces intricate cross-residual terms, which typically hinder the formulation of a monotonically decreasing energy function. However, as demonstrated in the estimation of $A_k$ within Lemma \ref{lem4}, the proposed mapping-space extrapolation technique precisely neutralizes these nonlinear residuals. This critical algebraic cancellation enables us to construct $\{a_k\}$ as a valid, bounded Lyapunov potential function. Consequently, under the parameter admissibility conditions discussed in Remark \ref{rm2}, the non-negativity of the dissipation term $b_k$ naturally guarantees that $a_{k+1} \le a_k - b_k$. This inequality reflects a steady and strict energy dissipation of the iterative system. Ultimately, it is this structural monotonicity that successfully bounds the primal-dual gap, thereby paving the way for the global convergence and the $\mathcal{O}(1/N)$ ergodic convergence rate established in the subsequent theorems.
    \end{remark}

    \subsection{Global convergence and convergence rate}
    Using Lemmas \ref{lem4} and \ref{lem5}, we analyze the convergence properties of PDAce, including its global pointwise convergence and ergodic sublinear convergence rate of $\mathcal{O}(1/N)$.
    
	\begin{thm}\label{thm1}
		\textup{\textbf{(Global convergence)}} Let $\{(x^k, y^k)\}_{k \in \mathbb{N}}$ be the sequence generated by Algorithm \ref{alg:PDAce} with an arbitrary initial point $(x^0, y^0) \in \operatorname{dom}(F)\times \operatorname{dom}(H^*)$. Suppose that $G(\cdot, \cdot)$ is given by \textup{(\ref{pdg})} for any fixed point $(x^*,y^*) \in \Omega$. Then the sequence $\{(x^k, y^k)\}_{k \in \mathbb{N}}$ converges to a solution of problem \textup{\eqref{prob1}}, namely, a point belonging to $\Omega$.
	\end{thm}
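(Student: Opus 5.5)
The argument is the standard Fejér-type one built on Lemma \ref{lem5}. Under the parameter choices of Remark \ref{rm2} every coefficient of $b_k$ in \eqref{eq30} is nonnegative. The coefficient $1+\alpha-\frac1\alpha$ is nonnegative because $\alpha\ge\frac{\sqrt5-1}{2}$; for the proof I will assume the other three coefficients are strictly positive, which can be arranged by taking $\tau<\tau_{\max}$ strictly.

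Since $G(x^k,y^k)\ge 0$, Lemma \ref{lem5} gives $a_{k+1}\le a_k-b_k$. Hence $\{a_k\}$ is nonincreasing and bounded below by $0$, so it converges. Telescoping gives $\sum_k b_k<\infty$, and therefore
\[
\|y^k-y^{k-1}\|\to0,\quad \|x^k-\bar x^k\|\to0,\quad \|x^{k+1}-x^k\|\to0,\quad \|x^{k+1}-\bar x^k\|\to 0 .
\]
Boundedness of $a_k$ makes $\{y^k\}$ and $\{\bar x^{k+1}\}$ bounded, and then $\{x^k\}$ is bounded because $\|x^k-\bar x^k\|\to0$. (If the coefficient of $\|x^{k+1}-\bar x^k\|^2$ happens to vanish, this limit still follows from $x^{k+1}-\bar x^k=(x^{k+1}-x^k)+(x^k-\bar x^k)$.)

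Next I show that every cluster point lies in $\Omega$. Take a subsequence $(x^{k_i},y^{k_i})\to(\hat x,\hat y)$. By the vanishing differences, $x^{k_i+1}$ and $\bar x^{k_i}$ also converge to $\hat x$, and $y^{k_i+1}\to\hat y$. I write the prox optimality conditions for \eqref{eq:primal} and \eqref{eq:dualupdate}:
\[
\tfrac1\tau(\bar x^{k}-x^{k+1})-\nabla h(x^k)-J_g(\bar x^k)^\top y^k\in\partial f(x^{k+1}),\qquad \tfrac1\sigma(y^k-y^{k+1})+v^{k+1}\in\partial H^*(y^{k+1}).
\]
Along the subsequence, $v^{k+1}=g(\bar x^{k+1})-\alpha(g(\bar x^k)-g(x^{k+1}))\to g(\hat x)-\alpha(g(\hat x)-g(\hat x))=g(\hat x)$, using continuity of $g$ and $\bar x^{k+1}-\bar x^k\to0$. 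Continuity of $\nabla h$ and $J_g$ (Assumption \ref{ass2}) and closedness of the graphs of the subdifferentials of the closed convex functions $f$ and $H^*$ then give $-\nabla h(\hat x)-J_g(\hat x)^\top\hat y\in\partial f(\hat x)$ and $g(\hat x)\in\partial H^*(\hat y)$. Thus $(\hat x,\hat y)\in\Omega$ by \eqref{eq4}.

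Finally I prove that the whole sequence converges. Since Lemma \ref{lem5} holds for every $(x^*,y^*)\in\Omega$, I apply it with $(x^*,y^*)=(\hat x,\hat y)$. The corresponding $a_k(\hat x,\hat y)$ is nonincreasing and hence convergent. Along $k_i$, the term $\frac1\beta\|y^{k_i}-\hat y\|^2\to0$, the term $\frac1{1-\alpha}\|\bar x^{k_i+1}-\hat x\|^2\to0$, and $\tau L_h\|x^{k_i+1}-x^{k_i}\|^2\to0$. So $a_{k_i}\to0$, and by monotonicity $a_k\to0$ for the full sequence. This gives $y^k\to\hat y$ and $\bar x^{k}\to\hat x$, and then $x^k\to\hat x$ since $\|x^k-\bar x^k\|\to0$.

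I expect the main obstacle to be the bookkeeping of the dual limit: the dual optimality condition involves $v^{k+1}$ and the indices $k$, $k+1$, $k-1$ at once, so I must check that all index shifts vanish using only the differences controlled by $b_k$. A secondary point is confirming strict positivity of the $b_k$ coefficients from Remark \ref{rm2}. If only nonnegativity is available, I would shrink $\tau$ slightly to obtain strict inequalities.
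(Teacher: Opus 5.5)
Your proposal is correct and follows essentially the same route as the paper. Both arguments use the monotone decrease of $a_k$ from Lemma~\ref{lem5} and the vanishing of the differences weighted in $b_k$, identify each cluster point as an element of $\Omega$, and then rerun the Lyapunov inequality with $(x^*,y^*)$ replaced by that cluster point to force $a_k\to0$.

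The differences are minor:
\begin{enumerate}
\item You pass to the limit through closedness of the graphs of $\partial f$ and $\partial H^*$; the paper instead passes to the limit in the prox inequalities \eqref{eq24} and \eqref{eq23}.
\item You handle the endpoint $\alpha=\frac{\sqrt5-1}{2}$ explicitly. The paper tacitly assumes every coefficient of $b_k$ is positive.
\item You also assume strict positivity of the other three coefficients, but you attribute it to the wrong condition. Strictness depends on $\omega$ lying in the open interval $(L_\omega,U_\omega)$, not on $\tau<\tau_{\max}$.
\end{enumerate}
That strict-positivity assumption can in fact be dropped. The identities $x^{k}-\bar x^{k}=\alpha(x^{k}-\bar x^{k-1})$ and $\bar x^{k+1}-x^{k+1}=\alpha(\bar x^k-x^k)+\alpha(x^k-x^{k+1})$ show that if any one of the three primal differences tends to zero, so do the other two.
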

	\begin{proof}
		From Lemma \ref{lem4}, we obtain
		\begin{equation}\label{eq34}
			a_{k+1} + 2\tau_k G(x^k, y^k) \leq a_k - b_k, \quad \forall k \geq 1.
		\end{equation}
		
		Due to $G(x^k, y^k) \ge 0$, $a_k \ge 0$ and $b_k \ge 0$, it follows from Lemma \ref{lem2} that $\lim_{k\to\infty} a_k$ exists and $\lim_{k\to\infty} b_k = 0$. By the definition of $b_k$, this implies
		\[
		\lim_{k\to\infty} \| y^k - y^{k-1} \|
		= \lim_{k\to\infty} \| x^{k+1} - x^k \|
		= \lim_{k\to\infty} \| \bar{x}^k - x^{k+1} \|
		= \lim_{k\to\infty} \| x^k - \bar{x}^k \| = 0.
		\]
		Since $\lim_{k\to\infty} a_k$ exists, the sequences $\{\bar{x}^k\}$ and $\{y^k\}$ are bounded.
		Combined with $\lim_{k\to\infty} \| \bar{x}^k - x^{k+1} \| = 0$,
		it follows that the sequence $\{x^k\}$ is also bounded.\\
		Now we prove that $\{(x^k, y^k)\}$ converges to a solution of problem \eqref{prob1}.
		
		Let $(\bar{x}, \bar{y})$ be a limit point of the sequence $\{(x^k, y^k)\}$, i.e., there exists a subsequence $\{(x^{k_i}, y^{k_i})\}$ such that
		\[
		\lim_{i\to\infty} x^{k_i} = \bar{x}, \quad \lim_{i\to\infty} y^{k_i} = \bar{y}.
		\]
		From the previous results, we deduce that
		\[
		\lim_{i\to\infty} x^{k_i+1} = \lim_{i\to\infty} x^{k_i} = \lim_{i\to\infty} \bar{x}^{k_i-1} = \bar{x},
		\]
		\[
		\lim_{i\to\infty} y^{k_i} = \lim_{i\to\infty} y^{k_i-1} = \bar{y}.
		\]
		Moreover, from $\bar{x}^k = \alpha \bar{x}^{k-1} + (1-\alpha) x^k$, it follows that
		$\lim_{i\to\infty} \bar{x}^{k_i} = \bar{x}$.
		
		By (\ref{eq24}) and (\ref{eq23}), for any $(x,y) \in \operatorname{dom}(F) \times \operatorname{dom}(H^*)$, we have
		\[
		\begin{aligned}
			f(x^{k_i+1}) - f(x) &\le \left\langle \nabla h(x^{k_i}) + J g(\bar{x}^{k_i})^\top y^{k_i},\ x - x^{k_i+1} \right\rangle \\
			&\quad + \frac{1}{\tau} \left\langle x^{k_i+1} - \bar{x}^{k_i},\ x - x^{k_i+1} \right\rangle,
		\end{aligned}
		\]
		\[
		\begin{aligned}
			H^*(y^{k_i}) - H^*(y) &\le \left\langle g(\bar{x}^{k_i}) - \alpha \left(g(\bar{x}^{k_i-1}) - g(x^{k_i})\right),\ y^{k_i} - y \right\rangle \\
			&\quad + \frac{1}{\sigma} \left\langle y^{k_i} - y^{k_i-1},\ y - y^{k_i} \right\rangle.
		\end{aligned}
		\]
		Letting $i \to +\infty$ in the above two inequalities, we obtain
		\[
		f(\bar{x}) - f(x) \le \left\langle \nabla h(\bar{x}) + J g(\bar{x})^\top \bar{y},\ x - \bar{x} \right\rangle,
		\]
		\[
		H^*(\bar{y}) - H^*(y) \le \left\langle g(\bar{x}),\ \bar{y} - y \right\rangle.
		\]
		By the definition of the saddle point set $\Omega$ in \eqref{eq4}, this shows that $(\bar{x}, \bar{y}) \in \Omega$. The same reasoning applies to any $(x^{*}, y^{*}) \in \Omega$.
		Therefore, substituting $(\bar{x}, \bar{y})$ for $(x^{*}, y^{*})$ in the definitions of $a_k$ and $b_k$, we obtain
		\[
		\lim_{i\to\infty} \| \bar{x}^{k_i} - \bar{x} \|
		= \lim_{i\to\infty} \| x^{k_i} - \bar{x} \| = 0,
		\]
		\[
		\lim_{i\to\infty} \| y^{k_i-1} - \bar{y} \|
		= \lim_{i\to\infty} \| y^{k_i} - \bar{y} \| = 0.
		\]
		It follows that $\lim_{i\to\infty} a_{k_i} = 0$. Moreover, since $a_{k+1} \le a_k$. Then, we have $\lim_{k\to\infty} a_k = 0$. Consequently,
		\[
		\lim_{k\to\infty} x^k = \lim_{k\to\infty} \bar{x}^k = \bar{x}, \quad
		\lim_{k\to\infty} y^k = \bar{y}.
		\]
		This shows that the sequence generated by the algorithm converges to a solution of problem \eqref{prob1}.
        
	\end{proof}
	\begin{thm}\label{thm2}
		\textup{\textbf{(Sublinear convergence rate)}} Let $\{(x^k, y^k, \bar{x}^k)\}_{k \in \mathbb{N}}$ be the sequence generated by Algorithm \ref{alg:PDAce} with an arbitrary initial point $(x^0, y^0) \in \operatorname{dom}(F)\times \operatorname{dom}(H^*)$.
		Assume that $G(\cdot, \cdot)$ is defined as in \textup{(\ref{pdg})} for any fixed $(x^*,y^*) \in \Omega$.
		For any integer $N \ge 1$, define the averaged iterates
		\[
		X_N := \frac{1}{N} \sum_{k=1}^{N} x^k, \qquad
		Y_N := \frac{1}{N} \sum_{k=1}^{N} y^k.
		\]
		Then the following inequality holds:
		\begin{equation}
			G(X_N, Y_N) \le \frac{1}{2\tau N} \left(
			\frac{1}{\beta} \left\| y^{0} - y^* \right\|^2
			+ \frac{1}{1-\alpha} \left\| \bar{x}^{1} - x^* \right\|^2
			+ \tau L_h \left\| x^1 - x^{0} \right\|^2
			\right).
		\end{equation}
	\end{thm}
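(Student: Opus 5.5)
The plan is to telescope the descent inequality of Lemma \ref{lem5} and then pass from the sum of gaps to the gap at the averaged point using convexity–concavity. By Lemma \ref{lem5}, for every $k\ge 1$ we have
\[
a_{k+1} + 2\tau G(x^k,y^k) \le a_k - b_k \le a_k ,
\]
since the parameter choices of Remark \ref{rm2} make every coefficient of $b_k$ nonnegative. Summing over $k=1,\dots,N$ and dropping $a_{N+1}\ge 0$ gives
\[
2\tau\sum_{k=1}^N G(x^k,y^k)\le a_1 .
\]

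Next we bound $G(X_N,Y_N)$ by the average of the $G(x^k,y^k)$. Recall $G(x,y)=\mathcal L(x,y^*)-\mathcal L(x^*,y)$.
\begin{itemize}
\item The map $x\mapsto \mathcal L(x,y^*)=f(x)+h(x)+\langle g(x),y^*\rangle-H^*(y^*)$ is convex: $f,h$ are convex, and $\langle g(\cdot),y^*\rangle=\Phi(\cdot,y^*)$ is convex by Assumption \ref{ass2}(b), since $y^*\in\operatorname{dom}(H^*)$.
\item The map $y\mapsto \mathcal L(x^*,y)$ is concave, because $\Phi(x^*,\cdot)$ is linear and $H^*$ is convex.
\end{itemize}
Hence $G$ is jointly convex. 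Jensen's inequality then gives
\[
G(X_N,Y_N)\le \frac1N\sum_{k=1}^N G(x^k,y^k)\le \frac{a_1}{2\tau N}.
\]

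It remains to identify $a_1$ with the quantity on the right-hand side of Theorem \ref{thm2}. The definition \eqref{eq30} reads literally
\[
a_1=\frac1\beta\|y^1-y^*\|^2+\frac1{1-\alpha}\|\bar x^2-x^*\|^2+\tau L_h\|x^2-x^1\|^2,
\]
whereas the statement involves $y^0$, $\bar x^1$ and $x^1-x^0$. So the indexing must be reconciled. In inequality \eqref{eq33}, the quantity that is actually decreased at step $k$ is
\[
\frac1\beta\|y^{k-1}-y^*\|^2+\frac1{1-\alpha}\|\bar x^k-x^*\|^2+\tau L_h\|x^k-x^{k-1}\|^2 ,
\]
and it bounds the next-step analogue plus $2\tau G(x^k,y^k)$. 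I will therefore work directly with this shifted potential, call it $\tilde a_k$. Telescoping from $k=1$ then yields exactly the bound in the theorem, with
\[
\tilde a_1=\frac1\beta\|y^0-y^*\|^2+\frac1{1-\alpha}\|\bar x^1-x^*\|^2+\tau L_h\|x^1-x^0\|^2 .
\]

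The main obstacle is the validity of Lemma \ref{lem4} at $k=1$. Its proof uses the prox inequality \eqref{eq25} at iteration $k-1=0$, which involves $\bar x^{-1}$ and $y^{-1}$, as well as the relation $x^k-\bar x^{k-1}=\frac1\alpha(x^k-\bar x^k)$. These quantities are undefined with the given initialization $\bar x^0=\alpha x^0$.

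I would first check whether the terms involving $x^0$, $\bar x^{-1}$ and $y^{-1}$ can be given consistent conventions. If not, I would start the telescoping at $k=2$ and absorb the $k=1$ contribution into the constant. Alternatively, I would note that the intended statement treats the first iterate as the initialization, so the shifted potential starts at index $1$ exactly as written.

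Everything else is routine: nonnegativity of $b_k$, Jensen's inequality, and telescoping.
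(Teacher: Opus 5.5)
Your argument is the paper's: telescope the descent inequality of Lemma~\ref{lem5} from $k=1$ to $N$, then pass to the averaged point by Jensen's inequality, using that $G(x,y)=\mathcal L(x,y^*)-\mathcal L(x^*,y)$ is a convex function of $x$ plus a convex function of $y$. Your index reconciliation is correct and more careful than the paper's text. Adding $\tau L_h\|x^{k+1}-x^k\|^2$ to both sides of \eqref{eq33} gives $a_k+2\tau G(x^k,y^k)\le a_{k-1}-b_k$ with $a_k$ as in \eqref{eq30}. So the constant in the theorem is $a_0$ (your $\tilde a_1$), not the literal $a_1$ the paper's proof refers to.

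The obstacle you flag at $k=1$, however, does not exist. At $k=1$, Lemma~\ref{lem4} uses three ingredients:
\begin{itemize}
\item \eqref{eq25} for the step $x^1=\operatorname{Prox}_{\tau f}\bigl(\bar x^0-\tau(\nabla h(x^0)+J_g(\bar x^0)^\top y^0)\bigr)$;
\item \eqref{eq23} for $y^1=\operatorname{Prox}_{\sigma H^*}(y^0+\sigma v^1)$, where $v^1$ is built from $\bar x^1,\bar x^0,x^1$;
\item the identity $x^1-\bar x^0=\frac1\alpha(x^1-\bar x^1)$, which is just the update $\bar x^1=\alpha\bar x^0+(1-\alpha)x^1$.
\end{itemize}
The bounds on $A_1$ and $B_1$ likewise involve only $J_g(\bar x^0)^\top y^0$, $\nabla h(x^0)$, $\|y^1-y^0\|$ and $\|x^1-x^0\|$. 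Since $\bar x^0=\alpha x^0$ is prescribed, no $\bar x^{-1}$ or $y^{-1}$ ever appears, and telescoping from $k=1$ is legitimate as written.

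You should drop the fallback of starting at $k=2$. Without the $k=1$ inequality, $G(x^1,y^1)$ is uncontrolled in the average, and the constant becomes $\tilde a_2$ instead of $\tilde a_1$, so it would not prove the stated bound.
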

	\begin{proof}
		From relations \eqref{eq30}, we have
		\[
		2\tau G(x^k, y^k) \le a_k - a_{k+1}, \quad \forall k \ge 1.
		\]
		Summing this inequality from $k=1$ to $N$, we obtain
		\[
		2\tau \sum_{k=1}^{N} G(x^k, y^k)
		\le a_1 - a_{N+1}
		\le a_1,
		\]
		where the last step uses $a_{N+1} \ge 0$.
		
		By definition, $G(x,y)$ is joint convex in $x$ and $y$.
		Thus, by Jensen's inequality applied to the averaged iterates $X_N$ and $Y_N$, we have
		\[
		G(X_N, Y_N)
		\le \frac{1}{N}\sum_{k=1}^N G(x^k, y^k) \le \frac{1}{2\tau N}a_1.
		\]
		From the expression of $a_1$, this completes the proof of the theorem.
        
	\end{proof}
    
    \begin{remark} \label{rm5}
        In the convergence analysis of Theorem \textup{\ref{thm1}}, all coefficients of $b_k$ are required to be nonnegative, which yields the following condition:
        \begin{equation} \label{con01}
            2\tau L_h \le \frac{1}{\alpha} - \tau\left(\frac{L_{xx}}{\omega}+\frac{L_{xy}}{\varpi}\right).
        \end{equation}
        To derive a tighter upper bound for the stepsize $\tau$, we choose $\varpi$ as large as possible. Since $\varpi\in\bigl(0,\frac{1}{\sigma L_{xy}}\bigr)$, we set the critical value $\varpi=\frac{1}{\sigma L_{xy}}$, substitute it into inequality \eqref{con01}, and rearrange terms, leading to
        \[
        \tau\frac{L_{xx}}{\omega}+\tau\sigma L_{xy}^2 + 2\tau L_h \le \frac1\alpha.
        \]
        When problem \textup{(\ref{prob1})} reduces to $g(x) = Kx$ and $h(x)\equiv 0$, we have $L_{xx}=0$, $L_h=0$, and $L_{xy}=\|K\|$. The above condition simplifies to
        \[
        \tau \sigma L_{xy}^2 = \tau \sigma \|K\|^2 \le \frac{1}{\alpha} \leq \frac{\sqrt{5}+1}{2},
        \]
        which coincides with the stepsize condition derived in \textup{\cite{w5}}. This verifies the equivalence between our proposed algorithm and GRPDA under bilinear settings.

        It is worth noting that while evaluating the limit $\varpi \to \frac{1}{\sigma L_{xy}}$ causes the coefficient of $\|y^k-y^{k-1}\|^2$ to vanish, this limiting procedure is employed strictly to extract the largest permissible stepsize bounds. Theoretical analysis maintains the strict inequality $\varpi < \frac{1}{\sigma L_{xy}}$ precisely to guarantee the pointwise convergence of the dual iterates, which remains entirely uncompromised by this boundary analysis.
    \end{remark}

    \subsection{Linear Convergence}
	In this section, we assume that $f$ is strongly convex with modulus $\mu_f>0$ and $H^*$ is strongly convex with modulus $\mu>0$, which satisfy Assumption \ref{ass4}.
    \begin{assumption}\label{ass4}
		If $\phi$ is $\mu$-strongly convex, then there exists a constant $\gamma > 0$ such that
        \[
        \phi(y) \ge \phi(x) + \langle u, y - x \rangle + \frac{\mu}{2}\| y - x \|^2,
        \quad \forall x, y \in \mathbb{R}^n,\ \forall u \in \partial \phi(x).
        \]
	\end{assumption}
	
	\begin{thm}\label{thm3}
		\textup{\textbf{(Linear Convergence)}} Let $\{(x^k, y^k)\}_{k \in \mathbb{N}}$ be the sequence generated by the Algorithm \ref{alg:PDAce} with an arbitrary initial point $(x^0, y^0) \in \operatorname{dom}(F)\times \operatorname{dom}(H^*)$. Suppose $G(\cdot, \cdot)$ is given by \textup{(\ref{pdg})} for any fixed point $(x^*,y^*) \in \Omega$. Then there exists $\gamma > 0$ such that
		\begin{align*}
			\|x^k - x^*\|^2 \le C_x \left( \frac{1}{1+\gamma} \right)^{k-1},\quad
			\|y^k - y^*\|^2 \le C_y \left( \frac{1}{1+\gamma} \right)^{k},
		\end{align*}
		where $C_x = \frac{2(1+\alpha^2(1+\gamma))}{(1-\alpha)(1+\gamma)(1+\alpha\tau\mu_f)} U_1$, $C_y = \beta U_1$, and $\bigl\{ U_k \bigr\}_{k\in \mathbb{N}}$ be defined in \eqref{uv}.
	\end{thm}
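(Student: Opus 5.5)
I will rerun the argument of Lemmas \ref{lem4} and \ref{lem5}, now using the strongly convex version of Lemma \ref{lem1}. Since $f$ is $\mu_f$-strongly convex, the prox inequality \eqref{eq24} at $x^*$ gains the extra term $-\frac{\mu_f}{2}\|x^{k+1}-x^*\|^2$ on the right. Likewise, since $H^*$ is $\mu$-strongly convex, \eqref{eq23} gains $-\frac{\mu}{2}\|y^k-y^*\|^2$. I will leave \eqref{eq25} unchanged, or drop its strong-convexity bonus. The bounds on $A_k$ and $B_k$ do not use $f$ or $H^*$, so they carry over verbatim.

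After multiplying by $2\tau$, using \eqref{eq2}, and substituting \eqref{eq32}, the analogue of \eqref{eq33} should read
\[
2\tau G(x^k,y^k)+\Big(\tfrac1\beta+\tau\mu\Big)\|y^k-y^*\|^2+\tfrac{1}{1-\alpha}\|\bar x^{k+1}-x^*\|^2+\tau\mu_f\|x^{k+1}-x^*\|^2
\le \tfrac1\beta\|y^{k-1}-y^*\|^2+\tfrac1{1-\alpha}\|\bar x^k-x^*\|^2 - b_k + \cdots .
\]
Here the dots are the same $L_h$ bookkeeping as before. I define the potential
\[
U_k := \tfrac1\beta\|y^{k-1}-y^*\|^2+\tfrac{1}{1-\alpha}\|\bar x^{k}-x^*\|^2+\tau L_h\|x^{k}-x^{k-1}\|^2,
\]
which is essentially $a_{k-1}$ with indices shifted. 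A second auxiliary quantity $V_k$ collects the nonnegative dissipation terms; together these give the sequences referenced as \eqref{uv}.

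The key step is to show $(1+\gamma)U_{k+1}\le U_k$ for some $\gamma>0$, by borrowing from the strong convexity gains and from $b_k\ge 0$.
\begin{itemize}
\item The dual part is direct: $(\frac1\beta+\tau\mu)=\frac1\beta(1+\sigma\mu)$, so any $\gamma\le\sigma\mu$ works there.
\item The primal part is the main obstacle, because the gain $\tau\mu_f\|x^{k+1}-x^*\|^2$ involves $x^{k+1}$, whereas the potential involves $\bar x^{k+1}$. I will convert it using $\bar x^{k+1}-x^*=\alpha(\bar x^k-x^*)+(1-\alpha)(x^{k+1}-x^*)$ together with \eqref{eq3}, which gives $\|\bar x^{k+1}-x^*\|^2\le 2\alpha^2\|\bar x^k-x^*\|^2+2(1-\alpha)^2\|x^{k+1}-x^*\|^2$. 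This reproduces the shape of the factors $(1+\alpha^2(1+\gamma))$ and $(1+\alpha\tau\mu_f)$ in $C_x$.
\item The leftover $\|\bar x^k-x^*\|^2$ term must be absorbed. If that is not possible, I will instead contract only a weighted combination, choosing $\gamma$ small relative to $\tau\mu_f$, $\sigma\mu$ and the positive coefficients of $b_k$ (in particular the $\|x^{k+1}-\bar x^k\|^2$ and $\|x^{k+1}-x^k\|^2$ terms, which control the $L_h$ piece of $U_{k+1}$).
\end{itemize}
Dropping $G\ge0$ and choosing $\gamma$ as the minimum of these ratios yields $U_{k+1}\le (1+\gamma)^{-1}U_k$, hence $U_k\le(1+\gamma)^{-(k-1)}U_1$.

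Finally I read off the rates. From $\frac1\beta\|y^{k}-y^*\|^2\le U_{k+1}\le(1+\gamma)^{-k}U_1$ we get $\|y^k-y^*\|^2\le\beta U_1(1+\gamma)^{-k}$, so $C_y=\beta U_1$. For $x^k$, I will write $x^k-x^*=\frac{1}{1-\alpha}(\bar x^k-x^*)-\frac{\alpha}{1-\alpha}(\bar x^{k-1}-x^*)$ and bound each piece by $U_k$ and $U_{k-1}$, alternatively using the retained $\tau\mu_f$ term. Combining these with the geometric decay and the factor $(1+\alpha\tau\mu_f)$ from the primal gain gives the stated $C_x$ with exponent $k-1$. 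The expected difficulty lies entirely in the primal contraction step; the remainder is bookkeeping.
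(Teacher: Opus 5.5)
Your overall architecture matches the paper's. You use the strong-convexity versions of \eqref{eq24} and \eqref{eq23}, obtain the dual factor $1+\sigma\mu$, contract a potential, and then read off $y^k$ and $x^k$ through $x^k=\frac{1}{1-\alpha}\bar x^k-\frac{\alpha}{1-\alpha}\bar x^{k-1}$. However, the primal contraction step, which you rightly flag as the crux, fails as proposed. With your splitting $\|\bar x^{k+1}-x^*\|^2\le 2\alpha^2\|\bar x^k-x^*\|^2+2(1-\alpha)^2\|x^{k+1}-x^*\|^2$, the inequality becomes
\[
\Bigl(\frac{1}{1-\alpha}+\frac{\tau\mu_f}{2(1-\alpha)^2}\Bigr)\|\bar x^{k+1}-x^*\|^2\le\Bigl(\frac{1}{1-\alpha}+\frac{\alpha^2\tau\mu_f}{(1-\alpha)^2}\Bigr)\|\bar x^{k}-x^*\|^2+\cdots .
\]
This contracts only if $\alpha^2<\frac12$, so it breaks down on most of the admissible interval $[\frac{\sqrt5-1}{2},1)$, and your fallback ``weighted combination'' is never specified.

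The missing idea is to use the exact identity \eqref{eq32} instead of a lossy bound:
$\tau\mu_f\|x^{k+1}-x^*\|^2=\frac{\tau\mu_f}{1-\alpha}\|\bar x^{k+1}-x^*\|^2-\frac{\alpha\tau\mu_f}{1-\alpha}\|\bar x^{k}-x^*\|^2+\alpha\tau\mu_f\|x^{k+1}-\bar x^k\|^2$.
The left side then carries $\frac{1+\tau\mu_f}{1-\alpha}\|\bar x^{k+1}-x^*\|^2$ and the right side carries $\frac{1+\alpha\tau\mu_f}{1-\alpha}\|\bar x^{k}-x^*\|^2$. So the potential must weight $\|\bar x^k-x^*\|^2$ by $\frac{1+\alpha\tau\mu_f}{1-\alpha}$, which is precisely the $U_k$ of \eqref{uv}. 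With your weight $\frac{1}{1-\alpha}$, the right-hand side is not dominated by your $U_k$, so $(1+\gamma)U_{k+1}\le U_k$ is not available. With the correct weight, the primal factor is $1+\gamma_x=\frac{1+\tau\mu_f}{1+\alpha\tau\mu_f}$, i.e.\ $\gamma_x=\frac{(1-\alpha)\tau\mu_f}{1+\alpha\tau\mu_f}>0$ since $\alpha<1$. That weight, not the splitting, is where the factor $1+\alpha\tau\mu_f$ in $C_x$ comes from. The factor $1+\alpha^2(1+\gamma)$ comes from the final decomposition of $x^k$ and the one-index shift between $U_k$ and $U_{k-1}$.

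Second, do not drop the strong-convexity bonus in \eqref{eq25}. The piece $\tau L_h\|x^{k+1}-x^k\|^2$ of $U_{k+1}$ also needs the factor $1+\gamma$, and $b_k$ offers no guaranteed slack for it. Its coefficient $\frac1\alpha-\tau\bigl(\frac{L_{xx}}{\omega}+\frac{L_{xy}}{\varpi}+2L_h\bigr)$ is exactly zero at the admissible choice $\omega=L_\omega$. At $\alpha=\frac{\sqrt5-1}{2}$ the coefficient $1+\alpha-\frac1\alpha$ of $\|x^{k+1}-\bar x^k\|^2$ vanishes as well. The paper keeps $-\frac{\mu_f}{2}\|x^{k+1}-x^k\|^2$ in \eqref{eq61}. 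After multiplying by $2\tau$, this makes the $\|x^{k+1}-x^k\|^2$ coefficient strictly exceed $\eta=\tau L_h$, which is what defines $\bar\gamma$; then $\gamma=\min\{\gamma_x,\sigma\mu,\bar\gamma\}$.

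Finally, carried out carefully, your last step yields $\|x^k-x^*\|^2\le\frac{2(1+\alpha^2(1+\gamma))}{(1-\alpha)(1+\alpha\tau\mu_f)}U_1(1+\gamma)^{-(k-1)}$ for $k\ge 2$, which is what the paper's proof actually obtains. The extra factor $(1+\gamma)$ in the denominator of the stated $C_x$ is not produced by this argument. Also, the case $k=1$ would require control of $\bar x^0$, which $U_1$ does not provide.
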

	\begin{proof}
		From Lemma \ref{lem1} and due to $x^k - \bar{x}^{k-1} = \frac{1}{\alpha}(x^k - \bar{x}^k)$, we derive the following three inequalities,
		\begin{align}
			f(x^{k+1}) - f(x^*) &\le \left\langle \nabla h(x^k) + J_g(\bar{x}^k)^\top y^k,\ x^* - x^{k+1} \right\rangle \nonumber \\
			&\quad + \frac{1}{\tau} \left\langle x^{k+1} - \bar{x}^k,\ x^* - x^{k+1} \right\rangle - \frac{\mu_f}{2} \| x^{k+1} - x^* \|^2, \label{eq60}\\[6pt]
			f(x^k) - f(x^{k+1}) &\le \left\langle \nabla h(x^{k-1}) + J_g(\bar{x}^{k-1})^\top y^{k-1},\ x^{k+1} - x^k \right\rangle \nonumber \\
			&\quad + \frac{1}{\alpha\tau} \left\langle x^k - \bar{x}^k,\ x^{k+1} - x^k \right\rangle - \frac{\mu_f}{2} \| x^{k+1} - x^k \|^2, \label{eq61}\\[6pt]
			H^*(y^k) - H^*(y^*) &\le \left\langle g(\bar{x}^{k}) - \alpha\left(g(\bar{x}^{k-1}) - g(x^k)\right),\ y^k - y^* \right\rangle \nonumber \\
            &\quad + \frac{1}{\sigma} \left\langle y^k - y^{k-1},\ y^* - y^k \right\rangle + \frac{\mu}{2}\| y^k - y^* \|^2.\label{eq62}
		\end{align}
		By the convexity of $h(x)$, the desired gradient inequality follows
		\begin{align}
			h(x^k) - h(x^*) \le \left\langle \nabla h(x^k), x^k - x^* \right\rangle.\label{eq63}
		\end{align}
		Adding up inequalities \eqref{eq60}--\eqref{eq63} yield
		\begin{equation}\label{eq64}
			\begin{aligned}
				&\quad f(x^k) - f(x^*) + h(x^k) - h(x^*) + H^*(y^k) - H^*(y^*) \\
				& \le \left\langle \nabla h(x^k) + J_g(\bar{x}^k)^\top  y^k,\ x^* - x^{k+1} \right\rangle
				+ \left\langle \nabla h(x^{k-1}) + J_g(\bar{x}^{k-1})^\top  y^{k-1},\ x^{k+1} - x^k \right\rangle \\
				& \quad + \frac{1}{\tau} \left\langle x^{k+1} - \bar{x}^k,\ x^* - x^{k+1} \right\rangle
				- \frac{\mu}{2} \|y^k - y^*\|^2
				- \frac{\mu_f}{2} \|x^{k+1} - x^k\|^2 \\
				& \quad + \frac{1}{\alpha\tau} \left\langle x^k - \bar{x}^k,\ x^{k+1} - x^k \right\rangle
				+ \frac{1}{\sigma} \left\langle y^k - y^{k-1},\ y^* - y^k \right\rangle - \frac{\mu_f}{2} \|x^{k+1} - x^*\|^2 \\
				& \quad + \left\langle \nabla h(x^k),\ x^k - x^* \right\rangle
				+ \left\langle g(\bar{x}^{k}) - \alpha\left(g(\bar{x}^{k-1}) - g(x^k)\right),\ y^k - y^* \right\rangle.
			\end{aligned}
		\end{equation}
		Adopting a proof strategy similar to those in Lemmas \ref{lem4} and \ref{lem5}, and by the definition of the primal dual gap function $G(x,y)$, we establish the bound for (\ref{eq64}) given by
		\begin{equation*}
			\begin{aligned}
				& \qquad 2\tau G(x^k, y^k)
				+ \frac{1+\tau\mu\beta}{\beta} \|y^k - y^*\|^2
				+ \frac{1+\tau\mu_f}{1-\alpha} \|\bar{x}^{k+1} - x^*\|^2 \\
				& \le \frac{1}{\beta} \|y^{k-1} - y^*\|^2
				+ \frac{1+\alpha\tau\mu_f}{1-\alpha} \|\bar{x}^k - x^*\|^2 + \tau L_h \|x^k - x^{k-1}\|^2\\
				& \quad -\left(\frac1\beta - \tau \varpi L_{xy}\right)\|y^k-y^{k-1}\|^2 - \left(\frac1\alpha + \tau \mu_f - \tau\left(\frac{L_{xx}}{\omega}+\frac{L_{xy}}{\varpi} + L_h \right) \right)\|x^{k+1}-x^k\|^2\\
                & \quad -\left(1+\alpha-\frac1\alpha\right)\|x^{k+1}-\overline{x}^k\|^2 - \frac1\alpha\left(1-\tau \alpha L_{xx}\left(1+\omega\frac{(1-\alpha)^2}{\alpha^2}\right)\right)\|x^k-\overline{x}^k\|^2.
			\end{aligned}
		\end{equation*}
		Since $\frac1\alpha + \tau \mu_f - \tau\left(\frac{L_{xx}}{\omega}+\frac{L_{xy}}{\varpi} + L_h \right) > \tau L_h := \eta > 0$, there exists $\bar{\gamma} \in \mathbb{R}_{++}$ such that 
        $\frac{1}{\alpha} + \tau \mu_f - \tau\left(L_{xx}/\omega + L_{xy}/\varpi + L_h \right) > (1 + \bar{\gamma}) \eta > \eta$.
		Let
        \begin{equation*}
            \gamma_y = \tau \mu \beta, \quad \gamma_x = \frac{1+\tau\mu_f}{1 + \alpha \tau \mu_f}. 
        \end{equation*}
		Then we obtain
		\begin{align*}
			& \quad 2\tau G(x^k, y^k) \\
			& + (1+\gamma_y)\frac{1}{\beta}\|y^k - y^*\|^2
			+ (1+\gamma_x)\frac{1+\alpha\tau\mu_f}{1-\alpha}\|\bar{x}^{k+1} - x^*\|^2
			+ (1+\bar{\gamma})\eta\|x^{k+1} - x^k\|^2 \\
			\le & \frac{1}{\beta}\|y^{k-1} - y^*\|^2
			+ \frac{1+\alpha\tau\mu_f}{1-\alpha}\|\bar{x}^k - x^*\|^2
			+ \eta\|x^k - x^{k-1}\|^2 \\
			& -\left(\frac1\beta - \tau \varpi L_{xy}\right)\|y^k-y^{k-1}\|^2 -\left(1+\alpha-\frac1\alpha\right)\|x^{k+1}-\overline{x}^k\|^2 \\
            & - \frac1\alpha\left(1-\tau \alpha L_{xx}\left(1+\omega\frac{(1-\alpha)^2}{\alpha^2}\right)\right)\|x^k-\overline{x}^k\|^2.
		\end{align*}
		Let $\gamma = \min\{\gamma_x, \gamma_y, \bar{\gamma}\} > 0$. Then the above inequality can be rewritten as
		\begin{equation}\label{eq65}
			\begin{aligned}
				& \quad 2\tau G(x^k, y^k) \\
				& + (1+\gamma)\frac{1}{\beta}\|y^k - y^*\|^2
				+ (1+\gamma)\frac{1+\alpha\tau\mu_f}{1-\alpha}\|\bar{x}^{k+1} - x^*\|^2
				+ (1+\gamma)\eta\|x^{k+1} - x^k\|^2 \\
				\le & \frac{1}{\beta}\|y^{k-1} - y^*\|^2
				+ \frac{1+\alpha\tau\mu_f}{1-\alpha}\|\bar{x}^k - x^*\|^2
				+ \eta\|x^k - x^{k-1}\|^2 \\
				& -\left(\frac1\beta - \tau \varpi L_{xy}\right)\|y^k-y^{k-1}\|^2 -\left(1+\alpha-\frac1\alpha\right)\|x^{k+1}-\overline{x}^k\|^2 \\
                & - \frac1\alpha\left(1-\tau \alpha L_{xx}\left(1+\omega\frac{(1-\alpha)^2}{\alpha^2}\right)\right)\|x^k-\overline{x}^k\|^2.
			\end{aligned}
		\end{equation}
		We introduce the following definitions:
        \begin{equation} \label{uv}
            \begin{aligned}
    			U_k =& \frac{1}{\beta}\|y^{k-1} - y^*\|^2
    			+ \frac{1+\alpha\tau\mu_f}{1-\alpha}\|\bar{x}^k - x^*\|^2
    			+ \eta\|x^k - x^{k-1}\|^2, \\
    			V_k =&\left(\frac1\beta - \tau \varpi L_{xy}\right)\|y^k-y^{k-1}\|^2 +\left(1+\alpha-\frac1\alpha\right)\|x^{k+1}-\overline{x}^k\|^2 \\
                & + \frac1\alpha\left(1-\tau \alpha L_{xx}\left(1+\omega\frac{(1-\alpha)^2}{\alpha^2}\right)\right)\|x^k-\overline{x}^k\|^2.
    		\end{aligned}
        \end{equation}
        
        Then, based on the selection of each parameter in the PDAce algorithm, all coefficients of $V_k$ are non-negative and (\ref{eq65}) can be rewritten as
        \begin{equation}\label{eq67}
            2\tau G(x^k, y^k) + (1+\gamma)U_{k+1} \le U_k - V_k.
        \end{equation}
		Since both the primal dual gap function $G(x^k, y^k)$ and the sequence $V_k$ are non-negative, the inequality (\ref{eq67}) implies that
        \begin{equation*}
            U_{k+1} \le \frac{1}{1+\gamma} U_k.
        \end{equation*}
        Applying this recurrence relation recursively, we immediately obtain by induction that
        \begin{equation}\label{eq69}
            U_k \le \left( \frac{1}{1+\gamma} \right)^{k-1} U_1, \quad \forall\, k \ge 1.
        \end{equation}
        
        By the definition of the Lyapunov potential function, we have $\frac{1}{\beta}\|y^{k-1} - y^*\|^2 \le U_k$. Shifting the index to $k+1$ and utilizing \eqref{eq69}, we get
        \begin{equation*}
            \frac{1}{\beta}\|y^k - y^*\|^2 \le U_{k+1} \le \left( \frac{1}{1+\gamma} \right)^k U_1,
        \end{equation*}
        which directly yields the linear convergence rate for the dual variable:
        \begin{equation}\label{eq70}
            \|y^k - y^*\|^2 \le \beta U_1 \left( \frac{1}{1+\gamma} \right)^k.
        \end{equation}
        
        For the primal variable, we recall the convex combination step $\bar{x}^k = \alpha \bar{x}^{k-1} + (1-\alpha)x^k$. From the non-negativity of all terms comprising $U_{k+1}$, it follows that
        \begin{equation*}
            \frac{1+\alpha\tau\mu_{f}}{1-\alpha}||\overline{x}^{k}-x^{*}||^{2}\le U_{k}\le(\frac{1}{1+\gamma})^{k-1}U_{1}.
        \end{equation*}
        Thus, the sequence $\{\bar{x}^k\}$ satisfies
        \begin{equation}\label{eq71}
            \|\bar{x}^k - x^*\|^2 \le C_{\bar{x}} \left( \frac{1}{1+\gamma} \right)^{k-1},
        \end{equation}
        where $C_{\bar{x}} := \frac{1-\alpha}{1+\alpha\tau\mu_f} U_1$, and the initial energy $U_1$ is given by 
        \begin{equation*}
            U_1 = \frac{1}{\beta}\|y^0 - y^*\|^2 + \frac{1+\alpha\tau\mu_f}{1-\alpha}\|\bar{x}^1 - x^*\|^2 + \eta\|x^1 - x^0\|^2.
        \end{equation*}

        Furthermore, since $\bar{x}^k$ is constructed as a convex combination of $\bar{x}^{k-1}$ and $x^k$, the point-wise sequence $\{x^k\}$ inherently inherits this linear convergence property. More precisely, it hold that
        \begin{align*}
            \|x^k - x^*\|^2 
            &= \|\frac{1}{1-\alpha}(\overline{x}^{k}-x^{*}) - \frac{\alpha}{1-\alpha}(\overline{x}^{k-1}-x^{*})\|^2\\
            &\le \frac{2}{(1-\alpha)^{2}}||\overline{x}^{k}-x^{*}||^{2} + \frac{2\alpha^{2}}{(1-\alpha)^{2}}||\overline{x}^{k-1}-x^{*}||^{2}.
        \end{align*}
        Thus, using \eqref{eq71}, we obtain
        \begin{equation}\label{eq72}
            \|x^k - x^*\|^2 \le C_x \left( \frac{1}{1+\gamma} \right)^{k-1},
        \end{equation}
        with the positive constant defined as $C_x := \frac{2\bigl(1+\alpha^2(1+\gamma)\bigr)}{(1-\alpha)(1+\alpha\tau\mu_f)} U_1$. 
       
        From \eqref{eq70} and \eqref{eq72}, both the primal sequence $\{x^k\}$ and the dual sequence $\{y^k\}$ converge linearly to the saddle point $(x^*, y^*)$ with the convergence rate $\mathcal{O}\bigl((1+\gamma)^{-k}\bigr)$.
        
	\end{proof}
    
	\section{Accelerated convergence rate}\label{section4}
	In this section, we propose the accelerated PDAce algorithm and prove that it achieves the ergodic convergence rate of $\mathcal{O}(1/N^2)$.
    We assume that $f$ is strongly convex with modulus $\mu_f>0$, which satisfies Assumption \ref{ass4}.
	The accelerated PDAce is presented as follows. The connection between aPDAce and PDAce is discussed in Remark \ref{rm4}.
	\begin{algorithm}[H]
		\caption{Accelerated PDA with Convex combinations and Extrapolation(aPDAce)}
		\label{alg:aPDAce}
		\begin{algorithmic}[1]
			\State \textbf{Initialization:} Set parameters $\alpha\in[\frac{\sqrt{5}-1}{2},1),1 < \varphi < \min\left\{ \frac{1}{\alpha},\ \alpha(1+\alpha) \right\}, \beta_0>0$, $\xi \in(0, \frac{1}{\alpha L_{xy}})$, and $\tau_0 = \sqrt{\frac{\xi}{\beta_0 L_{xy}}}$. Choose initial points $x^0\in \operatorname{dom}(F), y^0 \in \operatorname{dom}(H^*)$. Set $\bar{x}^{0} = \alpha x^0$ and $k=0$.
			\State \textbf{Main Iteration:}
			\For{$k = 0, 1, 2, \dots$}
			\State Step 1. Compute
			\Statex \qquad \begin{align}
				x^{k+1} &= \operatorname{Prox}_{\tau_k f}\left(\bar{x}^k - \tau_k\big(\nabla h(x^k) + J_g(\bar{x}^k)^\top y^k\big)\right).\label{eq40}
			\end{align}
			\State Step 2. Compute
			\Statex \qquad \begin{align}
				\bar{x}^{k+1} &= \alpha \bar{x}^k + (1-\alpha)x^{k+1}, \label{eq41} \\
				\rho_{k+1} &= \frac{1 - \varphi\alpha}{1 + \mu_f \alpha \varphi \tau_k},\label{eq42} \\
				\beta_{k+1} &= \beta_k \left(1 + \rho_{k+1} \mu_f \tau_k\right),\label{eq43} \\
				\tau_{k+1} &= \min\left\{ \varphi \tau_k, \frac{\xi}{\tau_k \beta_{k+1} L_{xy}},\tau_{\max} \right\},\label{eq44}\\
				v^{k+1} &= g(\bar{x}^{k+1}) - \alpha\big(g(\bar{x}^k) - g(x^{k+1})\big), \label{eq45}\\
				y^{k+1} &= \operatorname{Prox}_{\beta_{k+1}\tau_{k+1} H^*}\left(y^k + \beta_{k+1}\tau_{k+1} v^{k+1} \right).\label{eq46}
			\end{align}
			\EndFor
		\end{algorithmic}
	\end{algorithm}
    
	\begin{remark}\label{rm4}
    For aPDAce (Algorithm \ref{alg:aPDAce}), we have the following observations.
    \begin{enumerate}
        \item In this algorithm, $\rho_{k+1}$ is used to update $\beta_{k+1}$ and plays a vital role in establishing convergence. Meanwhile, we balance the growth ratio $\tau_{k+1}/\tau_k$ and the magnitude of $\tau_{k+1}$ in \eqref{eq44}.
        \item If the strong convexity parameter $\mu_f = 0$, then $\beta_{k+1} \equiv \beta_0$ for all $k \ge 0$. By induction, one can easily verify that the minimum in \eqref{eq44} is always achieved by the second term on the right-hand side, which implies $\tau_{k+1} \equiv \tau_0$ for all $k \ge 0$. Consequently, aPDAce degenerates into PDAce (Algorithm \ref{alg:PDAce}) with fixed stepsizes $\tau = \tau_0$ and $\sigma = \sigma_0 := \beta_0 \tau_0$.
    \end{enumerate}
    \end{remark}
    
    We now derive several useful properties of the sequences $\{\tau_{k+1}\}$ and $\{\beta_{k+1}\}$ generated by aPDAce. For notational simplicity, we set $\delta_{k+1} = \frac{\tau_{k+1}}{\tau_k}$ for the rest of this section.
    
	\begin{lemma}\label{lem6}
        There exist positive constants $c_1,c_2>0$ such that for all $k \ge 1$,
        \begin{equation}\label{eq101}
            \sqrt{\beta_k} \tau_k \ge c_1 \quad \text{and} \quad \beta_k \ge c_2 k^2.
        \end{equation}
    \end{lemma}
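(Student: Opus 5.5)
The plan is to work with the scaled quantity $s_k:=\sqrt{\beta_k}\,\tau_k$. I first show that $s_k$ stays bounded above and below. The quadratic growth of $\beta_k$ then follows from the recursion \eqref{eq43}, because each step adds roughly a constant times $\sqrt{\beta_k}$. Three facts are used throughout.
\begin{enumerate}
\item Since $1<\varphi<1/\alpha$, \eqref{eq42} gives $\rho_{k+1}\in(0,1]$. Hence $\{\beta_k\}$ is nondecreasing with $\beta_k\ge\beta_0>0$.
\item By \eqref{eq44}, $\tau_k\le \bar\tau:=\max\{\tau_0,\tau_{\max}\}$ for all $k$.
\item Combining the two, $\beta_{k+1}/\beta_k=1+\rho_{k+1}\mu_f\tau_k\le 1+\mu_f\bar\tau=:q$.
\end{enumerate}

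\emph{Upper bound on $s_k$.} For $k\ge 0$, the second entry of the minimum in \eqref{eq44} gives $\tau_{k+1}\le \xi/(\tau_k\beta_{k+1}L_{xy})$. Therefore
\[
s_{k+1}^2=\beta_{k+1}\tau_{k+1}^2\le \frac{\tau_{k+1}}{\tau_k}\,\frac{\xi}{L_{xy}}\le \frac{\varphi\,\xi}{L_{xy}},
\]
where the last step uses $\tau_{k+1}\le\varphi\tau_k$. So $s_k\le\sqrt{\varphi\xi/L_{xy}}$ for every $k\ge1$, and also for $k=0$ since $s_0=\sqrt{\xi/L_{xy}}$.

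\emph{Lower bound on $s_k$.} Write $s_{k+1}$ as the minimum of three candidates, according to which entry of \eqref{eq44} is active, and bound each one.
\begin{enumerate}
\item First entry: $s_{k+1}=\varphi\tau_k\sqrt{\beta_{k+1}}\ge\varphi s_k\ge s_k$.
\item Second entry: $s_{k+1}=\dfrac{\xi}{L_{xy}\,s_k\sqrt{\beta_{k+1}/\beta_k}}\ge \dfrac{\xi/L_{xy}}{\sqrt{\varphi\xi/L_{xy}}\,\sqrt q}=\sqrt{\dfrac{\xi}{\varphi q L_{xy}}}$, using the upper bound on $s_k$ and the ratio bound $q$.
\item Third entry: $s_{k+1}=\tau_{\max}\sqrt{\beta_{k+1}}\ge\tau_{\max}\sqrt{\beta_0}$.
\end{enumerate}
This gives $s_{k+1}\ge\min\{s_k,\,\kappa\}$ with $\kappa:=\min\bigl\{\sqrt{\xi/(\varphi qL_{xy})},\ \tau_{\max}\sqrt{\beta_0}\bigr\}$. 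By induction, $s_k\ge c_1:=\min\{s_0,\kappa\}>0$ for all $k$.

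\emph{Quadratic growth of $\beta_k$.} Set $\underline\rho:=(1-\varphi\alpha)/(1+\mu_f\alpha\varphi\bar\tau)>0$, which is a lower bound for $\rho_{k+1}$ because $\tau_k\le\bar\tau$. From \eqref{eq43} and $\tau_k=s_k/\sqrt{\beta_k}\ge c_1/\sqrt{\beta_k}$,
\[
\beta_{k+1}-\beta_k=\beta_k\rho_{k+1}\mu_f\tau_k\ge \underline\rho\,\mu_f\,c_1\sqrt{\beta_k}.
\]
Next, $\sqrt{\beta_{k+1}}-\sqrt{\beta_k}=(\beta_{k+1}-\beta_k)/(\sqrt{\beta_{k+1}}+\sqrt{\beta_k})$, and $\sqrt{\beta_{k+1}}\le\sqrt q\,\sqrt{\beta_k}$. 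Hence
\[
\sqrt{\beta_{k+1}}-\sqrt{\beta_k}\ge c':=\frac{\underline\rho\,\mu_f c_1}{1+\sqrt q}.
\]
Telescoping gives $\sqrt{\beta_k}\ge\sqrt{\beta_0}+c'k\ge c'k$, so $\beta_k\ge c_2k^2$ with $c_2=(c')^2$.

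The delicate step is the lower bound on $s_k$ in the case where the second entry of \eqref{eq44} is active. That case requires the upper bound on $s_k$ and the bounded ratio $\beta_{k+1}/\beta_k$, and both depend on $\tau_k$ being uniformly bounded. This is why $\bar\tau=\max\{\tau_0,\tau_{\max}\}$ is used rather than $\tau_{\max}$: $\tau_0$ is prescribed independently of the cap, so the constants must be arranged to cover a possibly large initial stepsize.
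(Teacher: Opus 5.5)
Your proof is correct and follows essentially the paper's route. You bound $\beta_k\tau_k^2\le\varphi\xi/L_{xy}$ using the second entry of \eqref{eq44} together with $\tau_k\le\varphi\tau_{k-1}$, then run a three-case induction for the lower bound on $\sqrt{\beta_k}\,\tau_k$, and finally derive $\beta_{k+1}\ge\beta_k+\rho\sqrt{\beta_k}$ to get quadratic growth; your choice $\bar\tau=\max\{\tau_0,\tau_{\max}\}$ (the paper tacitly assumes $\tau_0\le\tau_{\max}$) and your explicit telescoping of $\sqrt{\beta_k}$ (instead of the paper's induction with $c_2=\min\{\rho^2/9,\beta_1\}$) are only minor tightenings of the same argument.
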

	
	\begin{proof}
		Let $\theta = \xi / L_{xy}$. From the update rule of the stepsize, we have $\tau_k \le \varphi \tau_{k-1}$ and $\tau_{k-1} \le \theta / (\beta_{k-1} \tau_{k-2})$ for all $k \ge 1$. Consequently, 
        \begin{equation}\label{eq102}
            \beta_{k-1}\tau_{k-1}^2 \le \varphi \beta_{k-1}\tau_{k-1}\tau_{k-2} \le \varphi\theta.
        \end{equation}
        Define the monotonically increasing function
        \begin{equation}\label{u(t)}
            u(\tau) := 1 + \frac{(1-\varphi\alpha)\mu_f\tau}{1+\mu_f\alpha\varphi\tau},
        \end{equation}
        and let $\varsigma = u(\tau_{\max})$. Since $\tau_k \le \tau_{\max}$, we have $1 \le u(\tau_k) \le \varsigma$. Given the parameter update rule $\beta_{k+1} = \bigl(1 + \rho_{k+1}\mu_f\tau_k\bigr)\beta_k \le \varsigma\beta_k$, it directly follows from \eqref{eq102} that
        \begin{equation}\label{eq103}
            \beta_k\tau_{k-1}^2 \le \varsigma\beta_{k-1}\tau_{k-1}^2 \le \varsigma\varphi\theta.
        \end{equation}
        
        Recall the update rule for $\tau_{k+1}$ given in (\ref{eq44}), Substituting the index k with \(k-1\), we obtain:
        \begin{equation*}
            \tau_{k} = \min\left\{ \varphi\tau_{k-1}, \, \frac{\theta}{\beta_{k}\tau_{k-1}}, \, \tau_{\max} \right\}.
        \end{equation*}
        Define the positive constant
        \begin{equation*}
            c_1 := \min\left\{\sqrt{\beta_0}\tau_0, \sqrt{\frac{\theta}{\varsigma\varphi}}, \sqrt{\beta_0}\tau_{\max}\right\} > 0.
        \end{equation*}
        We proceed to prove $\sqrt{\beta_k}\tau_k \ge c_1$ for all $k \ge 0$ by mathematical induction. When $k=0$, By the exact definition of $c_1$, it trivially holds that $\sqrt{\beta_0}\tau_0 \ge c_1$. Then assume that the inequality holds for the $(k-1)$-th iteration, i.e., $\sqrt{\beta_{k-1}}\tau_{k-1} \ge c_1$ for some integer $k \ge 1$.
        
        We now show that the inequality holds for the $k$-th iteration ($\sqrt{\beta_k}\tau_k \ge c_1$). Based on the minimum operator in the update rule \eqref{eq44}, we analyze the three possible cases for $\tau_k$:
        
        \begin{description}
            \item[\textbf{Case 1 ($\tau_k = \varphi \tau_{k-1}$):}]  Since $\varphi > 1$ and the parameter sequence $\{\beta_k\}$ is monotonically increasing (i.e., $\beta_k \ge \beta_{k-1}$), we obtain
            \begin{equation*}
                \sqrt{\beta_k}\tau_k = \varphi\sqrt{\beta_k}\tau_{k-1} \ge \varphi \sqrt{\beta_{k-1}}\tau_{k-1} > \sqrt{\beta_{k-1}}\tau_{k-1}.
            \end{equation*}
            Applying the inductive hypothesis to the right-hand side, it directly follows that $\sqrt{\beta_k}\tau_k \ge c_1$.
            
            \item[\textbf{Case 2 ($\tau_k = \theta / (\beta_k \tau_{k-1})$):}] This directly implies $\beta_k \tau_k \tau_{k-1} = \theta$. Consequently,
            \begin{equation*}
                \beta_k \tau_k^2 = \theta \frac{\tau_k}{\tau_{k-1}} = \frac{\theta^2}{\beta_k \tau_{k-1}^2}.
            \end{equation*}
            Substituting the upper bound from \eqref{eq103} yields
            \begin{equation*}
                \beta_k \tau_k^2 \ge \frac{\theta^2}{\varsigma\varphi\theta} = \frac{\theta}{\varsigma\varphi} \ge c_1,
            \end{equation*}
            which leads to $\sqrt{\beta_k}\tau_k \ge \sqrt{\theta / (\varsigma\varphi)} > 0$.

            \item[\textbf{Case 3 ($\tau_k = \tau_{\max}$):}] By the monotonicity of $\{\beta_k\}$, we have $\beta_k \ge \beta_0 > 0$. Thus,
            \begin{equation*}
                \sqrt{\beta_k}\tau_k \ge \sqrt{\beta_0}\tau_{\max} > c_1.
            \end{equation*}
        \end{description}
        Therefore, we conclude that $\sqrt{\beta_k}\tau_k \ge c_1$ holds for all $k \ge 1$.\\
        
        To establish the bound on $\beta_k$, we define $\rho := \frac{(1-\varphi\alpha)\mu_f c_1}{1+\mu_f\alpha\varphi\tau_{\max}} > 0$.
        From the update rule of $\beta_{k+1}$ and the property $\tau_k \le \tau_{\max}$, we deduce
        \begin{align*}
            \beta_{k+1} 
            &= \beta_k + \frac{1-\varphi\alpha}{1+\mu_f\alpha\varphi\tau_k}\mu_f \tau_k \beta_k \\
            &= \beta_k + \frac{1-\varphi\alpha}{1+\mu_f\alpha\varphi\tau_k}\mu_f \bigl(\sqrt{\beta_k}\tau_k\bigr)\sqrt{\beta_k} \\
            &\ge \beta_k + \rho\sqrt{\beta_k}.
        \end{align*}
        By a standard induction argument on the recurrence relation $\beta_{k+1} \ge \beta_k + \rho\sqrt{\beta_k}$, there exists a constant $c_2 = \min\bigl\{ \rho^2/9, \, \beta_1 \bigr\} > 0$ such that
        \begin{equation*}
            \beta_k \ge c_2 k^2, \quad \forall\, k \ge 1.
        \end{equation*}
        This completes the proof.
        
	\end{proof}
	
	\begin{thm}\label{thm4}
		Let $\{(\bar{x}^k, x^k, y^k, \rho_k, \beta_k, \tau_k)\}_{k\in\mathbb{N}}$ be the sequence generated by the Algorithm \ref{alg:aPDAce}, and let $(x^*,y^*) \in \Omega$ be any solution of problem \textup{\eqref{prob1}}. Assume that $G(\cdot, \cdot)$ is defined as in \textup{(\ref{pdg})}.
		Define $\delta_k = \frac{\tau_k}{\tau_{k-1}}$ and for any integer $N \ge 1$, define the weighted sums
		\[
		S_N := \sum_{k=1}^{N} \beta_k \tau_k, \quad
		X_N := \frac{1}{S_N} \sum_{k=1}^{N} \beta_k \tau_k x^k, \quad
		Y_N := \frac{1}{S_N} \sum_{k=1}^{N} \beta_k \tau_k y^k.
		\]
		Then, the following convergence rates hold:
		\[
		\| \bar{x}^{N+1} - x^* \| = \mathcal{O}\left(\frac{1}{N}\right), \qquad
		G(X_N, Y_N) = \mathcal{O}\left(\frac{1}{N^2}\right).
		\]
	\end{thm}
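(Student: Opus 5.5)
We will follow the PDAce analysis (Lemmas \ref{lem4}–\ref{lem5}), but keep the iteration-dependent stepsizes $\tau_k$ and $\sigma_k=\beta_k\tau_k$. We also keep the strong-convexity gains from $f$, exactly as in the proof of Theorem \ref{thm3}. The aim is a weighted one-step inequality of the form
\begin{equation*}
\beta_k\tau_k\, G(x^k,y^k) + \tilde a_{k+1} \le \tilde a_k ,
\end{equation*}
where the potential is
\begin{equation*}
\tilde a_k := \tfrac12\|y^k-y^*\|^2 + \tfrac{c\,\beta_{k}}{1-\alpha}\|\bar x^{k+1}-x^*\|^2 + (\text{nonnegative } \|x^{k+1}-x^k\|^2 \text{ terms}).
\end{equation*}
Summing over $k$ then gives $S_N G(X_N,Y_N)\le \tilde a_1$ by Jensen, since $G$ is jointly convex. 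It also gives $\beta_{N+1}\|\bar x^{N+1}-x^*\|^2\le C\tilde a_1$. Lemma \ref{lem6} finishes the argument. From $\beta_k\ge c_2k^2$ we get $\|\bar x^{N+1}-x^*\|=\mathcal O(1/N)$. From $\beta_k\tau_k=\sqrt{\beta_k}\,(\sqrt{\beta_k}\tau_k)\ge c_1\sqrt{c_2}\,k$ we get $S_N\ge c\,N^2$, hence $G(X_N,Y_N)=\mathcal O(1/N^2)$.

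\textbf{Step 1: per-iteration inequality.} Write the prox characterizations (Lemma \ref{lem1}) with $\mu_f$ for the $x^{k+1}$ and $x^k$ updates, using stepsizes $\tau_k$ and $\tau_{k-1}$. Write the dual one with $\sigma_k=\beta_k\tau_k$. Multiply the $x^k$-inequality by $\tau_k/\tau_{k-1}=\delta_k$ so that the $f(x^{k+1})$ terms cancel, in the spirit of the GRPDA/Malitsky analysis. Use $x^k-\bar x^{k-1}=\frac1\alpha(x^k-\bar x^k)$.

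\textbf{Step 2: coupling terms.} Bound the coupling terms through $A_k$ exactly as in Lemma \ref{lem4}. The mapping-space extrapolation \eqref{eq45} cancels the nonlinear residuals, and only Young-type terms remain, now carrying factors $\tau_k$, $\delta_k$ and $\varpi$. Choosing $\varpi$ tied to $\sigma_k$, the $\|y^k-y^{k-1}\|^2$ term is absorbed when $\tau_k\sigma_{k}L_{xy}$ is controlled. This is exactly what the rule $\tau_k\tau_{k-1}\beta_k L_{xy}\le\xi$ with $\xi<1/(\alpha L_{xy})$ in \eqref{eq44} provides. The $\tau_k\le\tau_{\max}$ cap keeps the $L_{xx},L_h$ coefficients as in Remark \ref{rm2}.

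\textbf{Step 3: telescoping.} Multiply the inequality by $\beta_k$ and use the identity \eqref{eq32} to convert $\|x^{k+1}-x^*\|^2$ into $\bar x$-terms. The strong-convexity term $\tau_k\mu_f\|x^{k+1}-x^*\|^2$ then produces the coefficient $(1+\mu_f\tau_k)$ in front of $\|\bar x^{k+1}-x^*\|^2$. This is slightly degraded by the $\alpha$-mixing, which is where $\rho_{k+1}=\frac{1-\varphi\alpha}{1+\mu_f\alpha\varphi\tau_k}$ enters. One must check that $\beta_{k+1}\le\beta_k(1+\rho_{k+1}\mu_f\tau_k)$ matches the available gain, so that the coefficient of $\|\bar x^{k+1}-x^*\|^2$ at step $k$ dominates the one needed at step $k+1$. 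The growth restriction $\delta_k\le\varphi<\min\{1/\alpha,\alpha(1+\alpha)\}$ should keep the coefficients of $\|x^{k+1}-\bar x^k\|^2$ and $\|x^k-\bar x^k\|^2$ (previously $1+\alpha-1/\alpha$ and similar) nonnegative after the $\delta_k$ scaling.

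\textbf{Main obstacle.} The delicate step is Step 3. The coefficients of the $\bar x$-distance and of the $\|x^{k+1}-\bar x^k\|^2$ and $\|x^k-\bar x^k\|^2$ terms now vary with $k$ through $\delta_k$ and $\beta_k$. Making them telescope requires matching the factor $(1+\alpha\varphi\mu_f\tau_k)$ from combining the two prox inequalities with the $\rho_{k+1}$ choice. I would first verify the scalar inequality $\beta_{k+1}(1+\alpha\mu_f\varphi\tau_k)\le\beta_k(1+\mu_f\tau_k)$, which is an identity-level check given \eqref{eq42}–\eqref{eq43}. I would then handle the $\delta_k$-weighted cross term $\frac{\delta_k}{\alpha}\langle x^k-\bar x^k,x^{k+1}-x^k\rangle$ by applying \eqref{eq2} and bounding $\delta_k\le\varphi$.
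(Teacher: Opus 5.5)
Your plan is essentially the paper's proof. The shared steps are:
\begin{itemize}
\item the $\delta_k$-weighted prox inequalities with the $\mu_f$ terms;
\item the $A_k$ bound, with a Young weight tied to the stepsizes so that $\beta_k\tau_k\tau_{k-1}L_{xy}\le\xi$ absorbs the $\|y^k-y^{k-1}\|^2$ term and leaves $\xi L_{xy}\delta_k$ on $\|x^{k+1}-x^k\|^2$;
\item multiplication by $\beta_k$, using the identity $\beta_{k+1}(1+\alpha\varphi\mu_f\tau_k)=\beta_k(1+\mu_f\tau_k)$ together with $\tau_{k+1}\le\varphi\tau_k$ to telescope the $\bar x$-distance (this is the paper's \eqref{eq54});
\item Lemma \ref{lem6} and Jensen to finish.
\end{itemize}

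One small correction. The restriction $\delta_k\le\varphi<\alpha(1+\alpha)$ only keeps the $\|x^{k+1}-\bar x^k\|^2$ coefficient nonnegative. The $\|x^k-\bar x^k\|^2$ and $\|x^{k+1}-x^k\|^2$ coefficients need more: for example, $\frac{\delta_k}{\alpha}-\tau_kL_{xx}\bigl(1+\omega\frac{(1-\alpha)^2}{\alpha^2}\bigr)\ge 0$ requires a lower bound on $\delta_k/\tau_k=1/\tau_{k-1}$. That bound comes from the cap $\tau_{k-1}\le\tau_{\max}$ with $\tau_{\max}$ small enough, not from $\varphi$.
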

	\begin{proof}
		From Lemma \ref{lem1}, we derive the following three inequalities,
		\begin{align}
			f(x^{k+1}) - f(x^*) &\le \left\langle \nabla h(x^k) + J_g(\bar{x}^k)^\top y^k,\ x^* - x^{k+1} \right\rangle \nonumber \\
			&\quad + \frac{1}{\tau_k} \left\langle x^{k+1} - \bar{x}^k,\ x^* - x^{k+1} \right\rangle - \frac{\mu_f}{2} \| x^{k+1} - x^* \|^2, \label{eq47}\\[6pt]
			f(x^k) - f(x^{k+1}) &\le \left\langle \nabla h(x^{k-1}) + J_g(\bar{x}^{k-1})^\top y^{k-1},\ x^{k+1} - x^k \right\rangle \nonumber \\
			&\quad + \frac{1}{\tau_{k-1}} \left\langle x^k - \bar{x}^{k-1},\ x^{k+1} - x^k \right\rangle - \frac{\mu_f}{2} \| x^{k+1} - x^k \|^2, \label{eq48}\\[6pt]
			H^*(y^k) - H^*(y^*) &\le \left\langle g(\bar{x}^{k}) - \alpha\left(g(\bar{x}^{k-1}) - g(x^k)\right),\ y^k - y^* \right\rangle \nonumber \\
            &\quad + \frac{1}{\beta_k \tau_k} \left\langle y^k - y^{k-1},\ y^* - y^k \right\rangle.\label{eq49}
		\end{align}
		Since $x^k - \bar{x}^{k-1} = \frac{1}{\alpha}(x^k - \bar{x}^k)$, substituting into (\ref{eq48}) and rearranging gives
		\begin{align}
			f(x^k) - f(x^{k+1}) &\le \left\langle \nabla h(x^{k-1}) + J_g(\bar{x}^{k-1})^\top y^{k-1},\ x^{k+1} - x^k \right\rangle \nonumber \\
			&\quad + \frac{\delta_k}{\alpha} \left\langle x^k - \bar{x}^k,\ x^{k+1} - x^k \right\rangle - \frac{\mu_f}{2} \| x^{k+1} - x^k \|^2. \label{eq50}
		\end{align}
		By the convexity of $h(x)$, the desired gradient inequality follows.
		\begin{align}
			h(x^k) - h(x^*) \le \left\langle \nabla h(x^k), x^k - x^* \right\rangle.\label{eq51}
		\end{align}
		Adding up inequalities \eqref{eq47}, \eqref{eq49}--\eqref{eq51} and by the definition of the primal dual gap function $G(x,y)$ yield
		\begin{align}
			& \qquad G(x^k,y^k) \nonumber\\
			& \le \left\langle \nabla h(x^k) + J_g(\bar{x}^k)^\top y^k,\ x^* - x^{k+1} \right\rangle
			+ \left\langle \nabla h(x^{k-1}) + J_g(\bar{x}^{k-1})^\top y^{k-1},\ x^{k+1} - x^k \right\rangle \nonumber\\
			& \quad + \frac{1}{\tau_k} \left\langle x^{k+1} - \bar{x}^k,\ x^* - x^{k+1} \right\rangle 
            - \frac{\mu_f}{2} \| x^{k+1} - x^* \|^2
			- \frac{\mu_f}{2} \| x^{k+1} - x^k \|^2\nonumber\\
			& \quad + \frac{1}{\alpha} \delta_k\left\langle x^k - \bar{x}^k,\ x^{k+1} - x^k \right\rangle
			+ \frac{1}{\beta_k \tau_k} \left\langle y^k - y^{k-1},\ y^* - y^k \right\rangle \label{eq52}\\
			& \quad + \left\langle \nabla h(x^k),\ x^k - x^* \right\rangle
			  + \left\langle g(x^k),\ y^* \right\rangle - \left\langle g(x^*),\ y^k \right\rangle \nonumber\\
			& \quad + \left\langle g(\bar{x}^{k}) - \alpha\left(g(\bar{x}^{k-1}) - g(x^k)\right),\ y^k - y^* \right\rangle.\nonumber
		\end{align}
		By similar derivations as \eqref{eq20}, \eqref{eq31} and \eqref{eq32} in Lemma \ref{lem4} and Lemma \ref{lem5}, (\ref{eq52}) can be bounded as
		\begin{equation}\label{eq521}
			\begin{aligned}
				& \quad 2\tau_k G(x^k, y^k) \\
				& + \frac{1}{\beta_k} \| y^k - y^{k-1} \|^2
				+ \frac{1}{\beta_k} \| y^k - y^* \|^2
				- \frac{1}{\beta_k} \| y^{k-1} - y^* \|^2 \\
				& + \| x^{k+1} - \bar{x}^k \|^2
				- \| \bar{x}^k - x^* \|^2 \\
				& + (1 + \mu_f \tau_k) \left(
				\frac{1}{1-\alpha} \| \bar{x}^{k+1} - x^* \|^2
				- \frac{\alpha}{1-\alpha} \| \bar{x}^k - x^* \|^2
				+ \alpha \| x^{k+1} - \bar{x}^k \|^2
				\right) \\
				& + \frac{1}{\alpha} \delta_k \| x^k - \bar{x}^k \|^2
				+ \frac{1}{\alpha} \delta_k \| x^k - x^{k+1} \|^2
				- \frac{1}{\alpha} \delta_k \| \bar{x}^k - x^{k+1} \|^2 \\
				\leq {}& \tau_k L_{xx}\left(1+\omega \frac{(1-\alpha)^2}{\alpha^2}\right)\|x^k-\overline{x}^k\|^2
                +\frac{\tau_k \tau_{k-1}L_{xy}}{\xi}\|y^k-y^{k-1}\|^2 \\
                &+\tau_k L_h\|x^k-x^{k-1}\|^2
                +\tau_k\left( \frac{L_{xx}}{\omega}+\frac{\xi L_{xy}}{\tau_{k-1}}+L_h \right)\|x^{k+1}-x^k\|^2
				- \mu_f \tau_k \| x^{k+1} - x^k \|^2,
			\end{aligned}
		\end{equation}
		where $\frac{\tau_{k-1}}{\xi}$ denotes a scaling weight parameter derived from Young's inequality. Combining and rearranging terms of (\ref{eq521}), we obtain
		\begin{equation}\label{eq53}
			\begin{aligned}
				& \quad 2\tau_k G(x^k, y^k) \\
                & + \frac1{\beta_k}\|y^k-y^*\|^2 + \frac{1+\mu_f\tau_k}{1-\alpha}\|\bar{x}^{k+1}-x^*\|^2 \\
                \le{}& \frac1{\beta_k}\|y^{k-1}-y^*\|^2 + \frac{1+\mu_f\alpha\tau_k}{1-\alpha}\|\bar{x}^k-x^*\|^2 +\tau_k L_h\|x^k-x^{k-1}\|^2\\
                & -\left(\frac1{\beta_k}-\frac{\tau_k\tau_{k-1}L_{xy}}{\xi}\right)\|y^k-y^{k-1}\|^2 
                -\left(1+\alpha+\alpha\mu_f\tau_k-\frac{1}{\alpha}\delta_k\right)\|\bar{x}^k-x^{k+1}\|^2 \\
                & -\left(\frac{1}{\alpha}\delta_k-\tau_k L_{xx}\left(1+\omega\frac{(1-\alpha)^2}{\alpha^2}\right)\right)\|x^k-\bar{x}^k\|^2 \\
                & -\left(\frac{1}{\alpha}\delta_k+\mu_f\tau_k-\tau_k\left(\frac{L_{xx}}{\omega}+\frac{\xi L_{xy}}{\tau_{k-1}}+L_h\right)\right)\|x^{k+1}-x^k\|^2.
			\end{aligned}
		\end{equation}
		Since $\frac{1+\mu_f\tau_k}{1-\alpha} = \frac{1+\mu_f\tau_k}{1+\mu_f\alpha\tau_{k+1}}\frac{1+\mu_f\alpha\tau_{k+1}}{1-\alpha}$, and 
		\[
		\frac{1+\mu_f\tau_k}{1+\mu_f\alpha\tau_{k+1}}\geq \frac{1+\mu_f\tau_k}{1+\mu_f\alpha\varphi\tau_k} = 1 + \frac{1 - \varphi\alpha}{1 + \mu_f \alpha \varphi \tau_k}\mu_f\tau_k = 1 + \rho_{k+1}\mu_f\tau_k,
		\]
		where the inequality follows from $\tau_{k+1} \le \varphi \tau_k$. By definition of $\beta_{k+1}$, it follows that
		\begin{equation}\label{eq54}
			\frac{1+\mu_f\tau_k}{1-\alpha}\geq (1 + \rho_{k+1}\mu_f\tau_k)\frac{1+\mu_f\alpha\tau_{k+1}}{1-\alpha} = \frac{\beta_{k+1}}{\beta_k}\frac{1+\mu_f\alpha\tau_{k+1}}{1-\alpha}.
		\end{equation}
        Meanwhile, since $\tau_k, \mu_f > 0$, it follows that
        \begin{equation}\label{eq541}
            1 + \alpha + \alpha\mu_f\tau_k - \frac{1}{\alpha}\delta_k > 1 + \alpha - \frac{1}{\alpha}\delta_k
        \end{equation}
		Substituting \eqref{eq54} and \eqref{eq541} into \eqref{eq53}, then multiplying both sides of the inequality by $\beta_k$, we obtain
		\begin{equation}\label{eq542}
			\begin{aligned}
				& \quad 2\tau_k \beta_k G(x^k, y^k) \\
				& + \beta_{k+1} \frac{1 + \mu_f \alpha \tau_{k+1}}{1-\alpha} \| \bar{x}^{k+1} - x^* \|^2
				+ \| y^k - y^* \|^2\\
				\le & \beta_k \frac{1 + \mu_f \alpha \tau_k}{1-\alpha} \| \bar{x}^k - x^* \|^2
				+ \| y^{k-1} - y^* \|^2
				+ L_h\tau_k \beta_k \| x^k - x^{k-1} \|^2 \\
				&-\left(1-\frac{\tau_k\tau_{k-1}L_{xy}\beta_k}{\xi}\right)\|y^k-y^{k-1}\|^2 -\beta_k\left(1+\alpha-\frac{\delta_k}{\alpha}\right)\|\bar{x}^k-x^{k+1}\|^2\\
                &-\beta_k\left(\frac{\delta_k}{\alpha}-\tau_k L_{xx}\left(1+\omega\frac{(1-\alpha)^2}{\alpha^2}\right)\right)\|x^k-\bar{x}^k\|^2 \\
                &-\beta_k\left(\frac{\delta_k}{\alpha}+\mu_f\tau_k-\tau_k\left(\frac{L_{xx}}{\omega}+\frac{\xi L_{xy}}{\tau_{k-1}}+L_h\right)\right)\|x^{k+1}-x^k\|^2.
			\end{aligned}
		\end{equation}
		Adding $L_h\tau_{k+1} \beta_{k+1} \| x^{k+1} - x^k \|^2$ to both sides of equation (\ref{eq542}), and defining $Q_k$ and $P_k$ by
		\begin{equation} \label{eq55}
		    \begin{aligned} 
    			Q_k =& \frac{1 + \mu_f \alpha \tau_k}{1-\alpha} \| \bar{x}^k - x^* \|^2
    			+ \frac{1}{\beta_k}\| y^{k-1} - y^* \|^2
    			+ L_h\tau_k \| x^k - x^{k-1} \|^2,\\
    			P_k =& \left(\frac1{\beta_k}-\frac{\tau_k\tau_{k-1}L_{xy}}{\xi}\right)\|y^k-y^{k-1}\|^2 +\left(1+\alpha-\frac{1}{\alpha}\delta_k\right)\|\bar{x}^k-x^{k+1}\|^2\\
                & + \left(\frac{1}{\alpha}\delta_k-\tau_k L_{xx}\left(1+\omega\frac{(1-\alpha)^2}{\alpha^2}\right)\right)\|x^k-\bar{x}^k\|^2 \\
                & +\left(\frac{1}{\alpha}\delta_k+\mu_f\tau_k-\tau_k\left(\frac{L_{xx}}{\omega}+\frac{\xi L_{xy}}{\tau_{k-1}}+L_h\right) - L_h\tau_{k+1} \frac{\beta_{k+1}}{\beta_k}\right)\|x^{k+1}-x^k\|^2.
    		\end{aligned}
		\end{equation}
		As a result, we arrive at the following concise inequality,
        \begin{equation}\label{eq:51}
            \beta_{k+1}Q_{k+1} + 2\beta_k\tau_k G(x^k, y^k) \le \beta_k Q_k - \beta_k P_k.
        \end{equation}
        
        To enable a telescoping argument on \eqref{eq:51} for the convergence-rate analysis, we require $P_k \ge 0$. Given the prerequisite $\frac{1}{\beta_k} \ge \frac{\tau_k\tau_{k-1}L_{xy}}{\xi}$, this nonnegativity is equivalent to the step-size ratio $\delta_k = \tau_k/\tau_{k-1}$ satisfying:
        \begin{equation}\label{eq:52}
            \begin{aligned}
                &\max\left\{ \alpha\tau_k L_{xx}\left(1 + \omega\frac{(1-\alpha)^2}{\alpha^2}\right), \ \alpha\left( \tau_k\left(\frac{L_{xx}}{\omega} + \frac{\xi L_{xy}}{\tau_{k-1}} + L_h\right) + L_h\tau_{k+1} \frac{\beta_{k+1}}{\beta_k} - \mu_f\tau_k \right) \right\} \\
                & \le \delta_k \le \alpha(1+\alpha).
            \end{aligned}
        \end{equation}
        
        The upper bound $\delta_k < \alpha(1+\alpha)$ holds inherently: the configuration $\alpha \in \bigl[\frac{\sqrt{5}-1}{2},1\bigr)$ ensures $\alpha(1+\alpha) > 1$, and the algorithm strictly enforces $\delta_k \le \varphi < \min\bigl\{\frac{1}{\alpha},\alpha(1+\alpha)\bigr\}$.
        
        For the lower bound, substituting $\delta_k = \tau_k/\tau_{k-1}$ into the maximum operator's second argument isolates $\delta_k$ with a strictly positive coefficient $(1-\alpha\xi L_{xy})$, since $\xi < \frac{1}{\alpha L_{xy}}$. The remaining terms are bounded by problem-dependent constants and the uniform upper bound $\tau_{\max}$. Thus, selecting a sufficiently small $\tau_{\max}$ ensures $\delta_k$ strictly dominates the lower bound. Consequently, the feasibility of \eqref{eq:52} is established, guaranteeing $P_k\ge0$ for all $k\ge1$.\\
		
		We first establish the growth rate of the sum $S_N = \sum_{k=1}^N \beta_k \tau_k$. By decomposing the terms and utilizing the bounds $\sqrt{\beta_k} \tau_k \ge c_1$ and $\beta_k \ge c_2 k^2$ derived above, we have
        \begin{equation*}
            \beta_k \tau_k = \bigl( \sqrt{\beta_k} \tau_k \bigr) \sqrt{\beta_k} \ge c_1 \sqrt{c_2} \, k.
        \end{equation*}
        Summing this inequality over the first $N$ steps yields
        \begin{equation*}
            S_N = \sum_{k=1}^N \beta_k \tau_k \ge c_1\sqrt{c_2} \sum_{k=1}^N k = c_1\sqrt{c_2} \frac{N(N+1)}{2}.
        \end{equation*}
        Using the standard lower bound $N(N+1)/2 \ge N^2/2$, we obtain
        \begin{equation*}
            S_N \ge \left( \frac{c_1\sqrt{c_2}}{2} \right) N^2.
        \end{equation*}
        Since $c_1\sqrt{c_2}/2 > 0$, this demonstrates that $S_N = \Omega(N^2)$.
        Here, the notation $\Omega(\cdot)$ denotes the asymptotic lower bound, meaning that $S_N$ grows at least quadratically in $N$.\\
        
        Next, having established that $P_k \ge 0$, we can discard the non-positive term $- \beta_k P_k$ in \eqref{eq55}. Since $Q_k \ge 0$, then summing the resulting inequality from $k=1$ to $N$ provides the following two bounds:
        \begin{gather}
            \sum_{k=1}^{N} 2\beta_k \tau_k G(x^k, y^k) \le \beta_1 Q_1, \label{eq57} \\
            \| \bar{x}^{N+1} - x^* \|^2 \le \frac{1-\alpha}{1 + \mu_f \alpha \tau_k} \frac{\beta_1 Q_1}{\beta_{N+1}} \le \frac{\beta_1 Q_1}{\beta_{N+1}}. \label{eq58}
        \end{gather}
        The second inequality in \eqref{eq58} holds since $\frac{1-\alpha}{1 + \mu_f \alpha \tau_k} < 1$.
        Let $(X_N, Y_N) = \frac{1}{S_N} \sum_{k=1}^N \beta_k \tau_k (x^k, y^k)$ denote the ergodic sequence. Applying Jensen's inequality, we deduce
        \begin{equation*}
            G(X_N, Y_N) \le \frac{1}{2S_N} \sum_{k=1}^N 2\beta_k \tau_k G(x^k, y^k) \le \frac{1}{S_N}\cdot\frac{\beta_1 Q_1}{2}.
        \end{equation*}
        Since $S_N = \Omega(N^2)$ and $\beta_{N+1} \ge c_2(N+1)^2 = \Omega(N^2)$, it follows from the definitions of $\beta_1$ and $Q_1$ that both the duality gap $G(X_N, Y_N)$ and the squared distance to the optimum $\| \bar{x}^{N+1} - x^* \|^2$ converge at a rate of $\mathcal{O}(1/N^2)$. The proof of the theorem is completed.
		
	\end{proof}

    \begin{remark}
        Beyond the theoretical existence of a sufficiently small upper bound $\tau_{\max}$ established in Theorem \ref{thm4} to guarantee the residual nonnegativity $P_k \ge 0$, this threshold admits an explicit analytical characterization. By strictly enforcing the inequality constraints governing $P_k \ge 0$, a constructive choice for $\tau_{\max}$ is formulated as $\tau_{\max} = \min\{\Gamma_1, \Gamma_2\}$, where
        \begin{equation*}
            \Gamma_1 := \frac{1}{\alpha L_{xx}\left(1 + \omega\frac{(1-\alpha)^2}{\alpha^2}\right)}, \quad \Gamma_2 := \frac{1 - \alpha\xi L_{xy}}{\alpha\left(\frac{L_{xx}}{\omega} + L_h - \mu_f\right) + \alpha L_h \varphi \varsigma}.
        \end{equation*}
        Here, $\varsigma = u(\tau_{\max})$ is given as in \eqref{u(t)} of Lemma \ref{lem6}. This explicit derivation provides a deterministic criterion for parameter selection, thereby reinforcing the theoretical rigor of the convergence analysis.
    \end{remark}

	\section{Numerical Experiments}\label{section5}
    In this section, \textcolor{black}{we first examine the numerical improvement induced by two key innovations in the proposed PDAce (Algorithm \ref{alg:PDAce}), and then evaluate the numerical performance of PDAce and its accelerated variant aPDAce (Algorithm \ref{alg:aPDAce}) on three representative classes of convex–concave saddle point problems: }quadratically constrained quadratic programming problems, convex-concave minimax game problems, and convex-concave minimax problems with exponential coupling term. 
    All experiments are conducted in \textsc{Matlab} R2023b on a 64-bit Windows 10 PC equipped with an Intel(R) Core(TM) i5-10500 CPU @ 3.10 GHz and 8 GB of RAM. 

    \subsection{Impact of Key Components: Linearization at \texorpdfstring{$\bar{x}^{k+1}$} and Update of \texorpdfstring{$v^{k+1}$}{v(k+1)}}\label{sec5.1}
	\textcolor{black}{We examine the numerical improvement induced by two key innovations in PDAce.} All numerical tests in this subsection and the following comparison experiments are implemented on convex quadratically constrained quadratic programming (QCQP) problems with the same experimental settings.
	
	The QCQP test problem is defined as
	\begin{equation}
		\label{eq:qcqp}
		\begin{cases}
			h_{\text{opt}} := \displaystyle\min_{x\in X} h(x) := \frac{1}{2}x^\top A_0 x + b_0^\top x, \\[1.5ex]
			\quad \text{s.t.} \quad g_j(x) := \frac{1}{2}x^\top A_j x + b_j^\top x - c_j \le 0, \quad j \in \{1, \dots, m\},
		\end{cases}
	\end{equation}
	where $X := [-10, 10]^n$. The problem data are generated randomly as follows: the vectors $\{b_j\}_{j=0}^m \subseteq \mathbb{R}^n$ are independent and identically distributed (i.i.d.) samples from the standard Gaussian distribution, the scalars $\{c_j\}_{j=1}^m \subseteq \mathbb{R}$ are chosen uniformly from $[0,1]$, and the matrices are defined as $A_j = \Lambda_j^\top S_j \Lambda_j$ for $j=0,1,\dots,m$. Here, each $\Lambda_j \in \mathbb{R}^{n\times n}$ is a random orthonormal matrix, and $S_j \in \mathbb{R}_+^{n\times n}$ is a diagonal matrix whose diagonal entries are sampled uniformly from $[0,100]$, with zero values are allowed on the diagonal of $S_j$. Define $g(x) := (g_1(x), \dots, g_m(x))^\top$ and $\Phi(x, y) = \langle y, g(x) \rangle$. Then, \eqref{eq:qcqp} can be written as $\min_x \max_y f(x)+ h(x) + \Phi(x, y) - H^*(y)$, where $f(x) = \iota_X(x)$ is the indicator function of $X$, and $H^*(y) = \iota_+(y)$ is the indicator function of the nonnegative orthant. Note that $\langle y, g(x) \rangle$ is linear with respect to $y$. Let $J_g(x)$ be the Jacobian matrix of $g(x)$. \textcolor{black}{It is not difficult to see} that $\nabla_x \Phi(x, y) = J_g(x)^\top y$ and $\nabla_y \Phi(x, y) = g(x)$. 
	
	For all algorithms tested in QCQP, the stopping criterion is
	\[
	\max\left\{e_{\text{obj}}(x_n),\ e_{\text{con}}(x_n)\right\} \leq \varepsilon,
	\]
	where
	\begin{equation}
		e_{\text{obj}}(x) := |h(x) - h_{\text{opt}}| / |h_{\text{opt}}|
		\quad \text{and} \quad
		e_{\text{con}}(x) := \frac{1}{m} \sum_{j=1}^{m} \max\{g_j(x), 0\}.
		\label{eq:termination_errors}
	\end{equation} 
	Here $h_{\text{opt}}$ is computed by MOSEK through CVX\footnote{Downloaded from \url{http://cvxr.com/cvx/}.}. 
	In addition, the iteration stops if the maximum number of iterations $n_{\max}$ is reached. 
	In the experiments, we set $\varepsilon = 10^{-8}$ and $n_{\max} = 1 \times 10^4$.

    \textcolor{black}{ 
    To verify the effectiveness of the linearization at the convex combination $\bar{x}^{k+1}$ and the dual extrapolation update of $v^{k+1}$ in PDAce, we compare PDAce with GRPDA and two variants (PDAce\_noe and PDAce\_noc) of PDAce on the QCQP problems to show the contribution of each component. Here, PDAce\_noe is the first variant, which uses $g(x^{k+1})$ instead of $v^{k+1}$ in the $y^{k+1}$ update and only keeps the convex combination strategy for $\bar{x}$; PDAce\_noc is the second variant, which removes the convex combination by using $x^{k}$ instead of $\bar{x}^{k}$ and only keeps the dual extrapolation step.}
    The results are presented in Table \ref{Table1} and Figure~\ref{fig1}, where each solid line corresponds to the median performance across 10 randomly generated instances. The shaded region surrounding each line illustrates the range of variation in the corresponding values across these instances.
    
    \textcolor{black}{ Before numerical experiments, we compute the Lipschitz constants $L_{xx},L_{xy},L_h$ of problem \eqref{eq:qcqp} to determine the theoretical upper bound of admissible stepsizes. Combining Assumption \ref{ass2}(b) with the triangle inequality, we obtain}
    \[
    \begin{aligned}
    \|J_g(x)^\top y-J_g(\tilde{x})^\top \tilde{y}\|
    &=\bigl\|J_g(x)^\top y-J_g(\tilde{x})^\top y+J_g(\tilde{x})^\top y-J_g(\tilde{x})^\top \tilde{y}\bigr\|\\
    &\le\bigl\|J_g(x)^\top y-J_g(\tilde{x})^\top y\bigr\|+\bigl\|J_g(\tilde{x})^\top(y-\tilde{y})\bigr\|\\
    &\le L_{xx}\|x-\tilde{x}\|+L_{xy}\|y-\tilde{y}\|,
    \end{aligned}
    \]
    \textcolor{black}{from which $L_{xx}$ and $L_{xy}$ are specified. Since exact global Lipschitz constants are generally intractable, we resort to local estimations. Using \eqref{eqt1} and \eqref{eqt2} in Remark \ref{rm2}, we obtain approximate upper bounds for $\tau$ and $\sigma$.}

    \textcolor{black}{ Specifically, to instantiate these parameters for a QCQP instance with dimensions $m=500$ and $n=20$, we first obtain a high-precision reference optimal solution $(x^*, y^*)$ using the CVX solver. The Lipschitz constant of the smooth term is directly given by the spectral norm, $L_h = \|A_0\|_2$. For the local estimates of the cross-terms, we compute $L_{xx}$ at $y^*$ as $L_{xx} = \|\sum_{l=1}^n y_l^* A_l\|_2$. Furthermore, $L_{xy}$ is approximated via Monte Carlo sampling in a bounded neighborhood of $x^*$. Using this procedure, we obtain the following estimated constants for the test instance:}
    \[
    L_h \approx 99.9874, \quad L_{xx} \approx 113.8335,\quad and \quad L_{xy} \approx 89.5187. 
    \]
    
    \textcolor{black}{With these constants in hand, we set the convex combination weight to $\alpha = 0.86$ and fix the stepsize ratio at $\beta = \sigma/\tau = 3.0$. By solving the critical cubic polynomial derived in Remark \ref{rm2}, we determine the theoretical maximum primal stepsize to be $\tau_{\max} \approx \text{39.16}\times 10^{-4}$. To strictly ensure the pointwise convergence of the iterative sequence and avoid numerical precision issues near the boundary, we incorporate a slight safety margin by setting the practical stepsizes as $\tau = 0.99 \tau_{\max}$ and $\sigma = \beta \tau = \text{116.58}\times 10^{-4}$ in subsequent simulations. Numerical experiments are conducted as follows.}
    
    First, we compare the iteration counts and CPU runtime of the four algorithms under five distinct parameter groups within their theoretical feasible ranges. The adopted four sets of parameters are listed as follows:
	\par $(\tau_1,\ \sigma_1,\ \alpha_1) = (20\!\times\!10^{-4},\ 35\!\times\!10^{-4},\ 0.92)$.\quad $(\tau_{max} \approx 37.41\times 10^{-4},\ \sigma_{max} \approx 112.23\times 10^{-4})$.
	\par $(\tau_2,\ \sigma_2,\ \alpha_2) = (20\!\times\!10^{-4},\ 40\!\times\!10^{-4},\ 0.92)$.
	\par $(\tau_3,\ \sigma_3,\ \alpha_3) = (20\!\times\!10^{-4},\ 20\!\times\!10^{-4},\ 0.92)$.
	\par $(\tau_4,\ \sigma_4,\ \alpha_4) = (20\!\times\!10^{-4},\ 10\!\times\!10^{-4},\ 0.86)$.\quad $(\tau_{max} \approx 39.16\times 10^{-4},\ \sigma_{max} \approx 117.49\times 10^{-4})$.
	\par $(\tau_5,\ \sigma_5,\ \alpha_5) = (39\!\times\!10^{-4},\ 63\!\times\!10^{-4},\ 0.86)$.
    
	\begin{table}[H]
		\centering
		\caption{Comparison of Iteration Counts and Time for Different Algorithms Under the Same Parameter Settings}
		\begin{tabular}{|c|c|c|c|c|c|c|}
			\hline
			\diagbox{\textbf{Algo}}{\textbf{Para}}
			& Metric & $(\tau_1,\sigma_1,\alpha_1)$ & $(\tau_2,\sigma_2,\alpha_2)$ & $(\tau_3,\sigma_3,\alpha_3)$ & $(\tau_4,\sigma_4,\alpha_4)$ & $(\tau_5,\sigma_5,\alpha_5)$ \\ 
			\hline
			\multirow{2}{*}{GRPDA}      
			& Iteration & 1839 & --- & 2089 & 4785 & --- \\ 
			& Time (s)  & 12.3110 & --- & 13.9111 & 31.9517 & --- \\ 
			\hline
			\multirow{2}{*}{PDAce\_noc} 
			& Iteration & 1347 & 1756 & 1853 & 4768 & 688 \\ 
			& Time (s)  & 10.5874 & 13.5741 & 14.3029 & 37.3776 & 6.2644 \\ 
			\hline
			\multirow{2}{*}{PDAce\_noe} 
			& Iteration & 1509 & 1546 & 1761 & 4851 & 705 \\ 
			& Time (s)  & 10.2630 & 12.0166 & 11.9716 & 32.3323 & 5.8733 \\
			\hline
			\multirow{2}{*}{PDAce}      
			& Iteration & 1472 & 1364 & 1702 & 4845 & 515 \\
			& Time (s)  & 11.7342 & 10.2866 & 13.1293 & 37.4661 & 4.7771 \\ 
			\hline
		\end{tabular}
		\vspace{0.3em}
		
		\footnotesize{Note: ``---'' indicates that the algorithm diverges.}
        \label{Table1}
	\end{table}

    \textcolor{black}{ It is worth noting that the empirically tuned stepsizes often exceed the theoretical upper bounds derived in Remark \ref{rm2} and Remark \ref{rm5}. This is a common phenomenon in primal-dual algorithms, as the theoretical bounds are sufficient conditions derived from worst-case Lipschitz constant estimations to guarantee global stability. In practice, the actual optimization trajectories frequently traverse regions with milder local curvatures, thereby allowing for significantly larger stepsizes and faster convergence speed.}
    
    Furthermore, to ensure a fair and rigorous comparison, the hyperparameters of all evaluated algorithms were carefully fine-tuned to achieve their best empirical performance. The specific parameter settings employed in our numerical experiments are summarized as follows.
    \par PDAce\_noe: $\tau=40\times10^{-4},\ \sigma=53\times10^{-4},\ \alpha=0.86$.
    \par PDAce\_noc: $\tau=44\times10^{-4},\ \sigma=64\times10^{-4},\ \alpha=0.74$.
    \par GRPDA: $\tau=14\times10^{-4},\ \sigma=12\times10^{-4},\ \alpha=0.81$.
    \par PDAce: $\tau=69\times10^{-4},\ \sigma=199\times10^{-4},\ \alpha=0.66$.
    
	\begin{figure}[H]
		\centering
		\begin{subfigure}{0.47\textwidth}
			\centering
			\includegraphics[width=\linewidth]{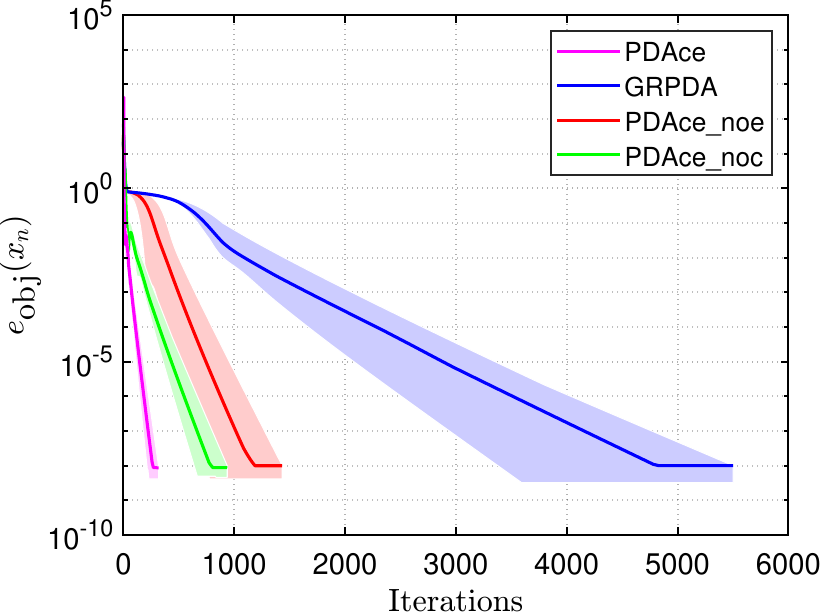}
            \caption{}
		\end{subfigure}
		\hfill
		\begin{subfigure}{0.47\textwidth}
			\centering
			\includegraphics[width=\linewidth]{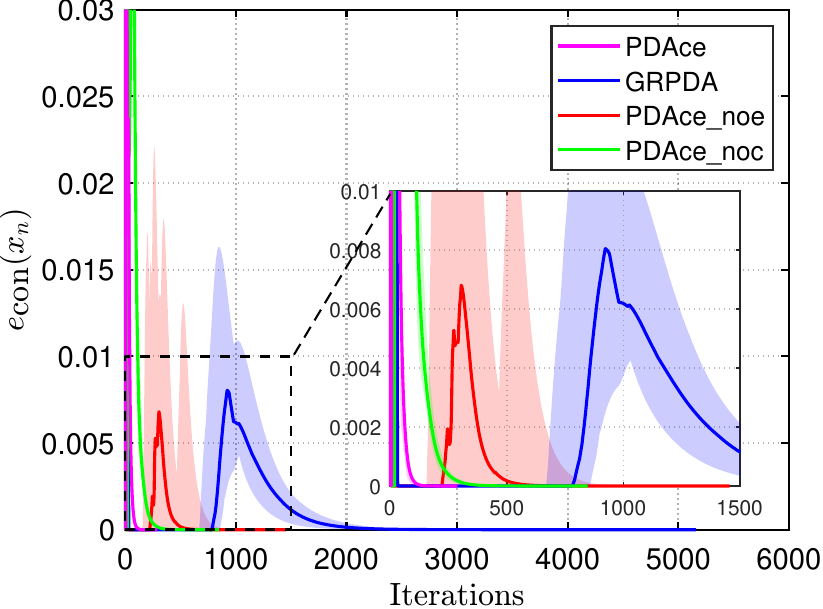}
            \caption{}
		\end{subfigure}
		
		\vspace{1em} 
		
		\begin{subfigure}{0.47\textwidth}
			\centering
			\includegraphics[width=\linewidth]{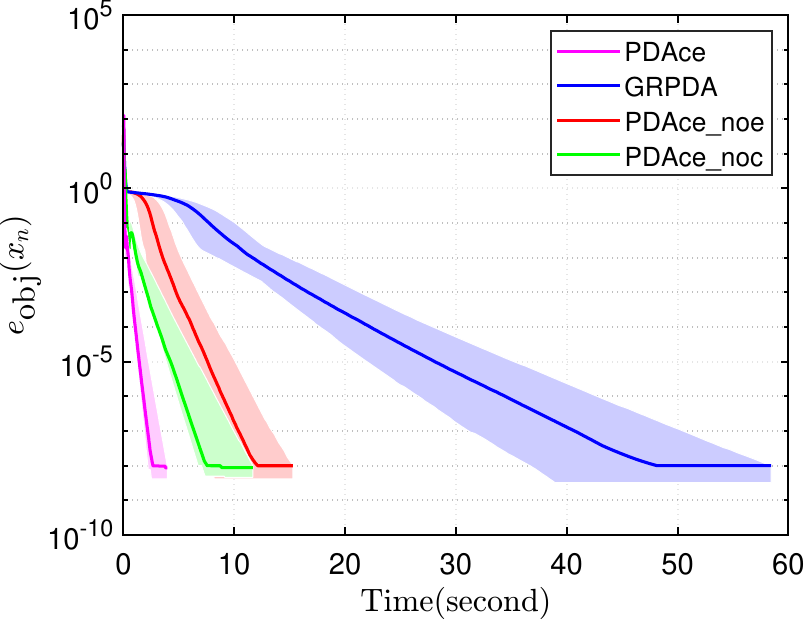}
            \caption{}
		\end{subfigure}
		\hfill
		\begin{subfigure}{0.47\textwidth}
			\centering
			\includegraphics[width=\linewidth]{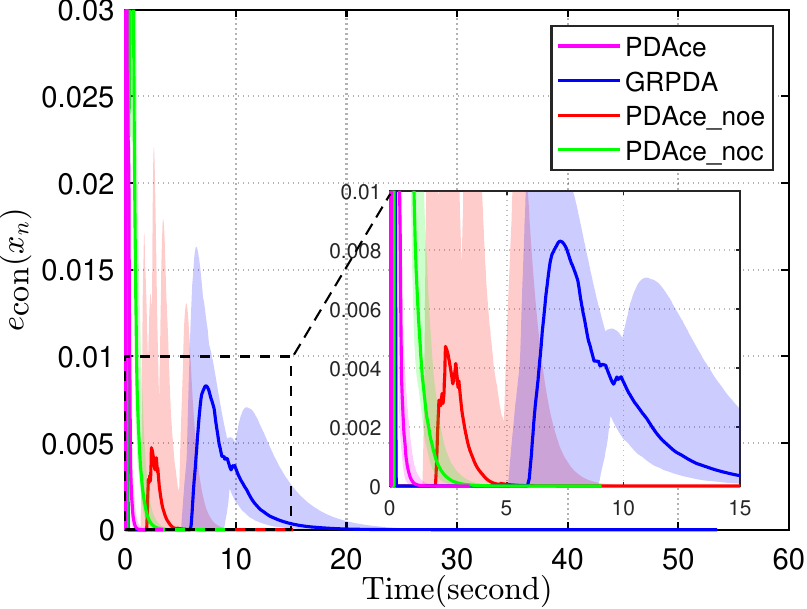}
            \caption{}
		\end{subfigure}
		
		\caption{The impact comparisons of $\bar{x}^{k+1}$(convex combination) and $v^{k+1}$(extrapolation). Comparison results in terms of function value residual (left) and feasibility violations (right) on 10 random QCQP with $n = 500$ and $m = 20$. 
			First row: Convergence behavior versus iteration counts. 
			Second row: Convergence behavior versus CPU time.}
		\label{fig1}
	\end{figure}

    From Table \ref{Table1}, the PDAce family of algorithms (including PDAce\_noc and PDAce\_noc) achieves better stability and computational efficiency than GRPDA. Specifically, GRPDA is sensitive to parameters and diverges under large step sizes. The proposed algorithm exhibits superior performance under large stepsize configurations. In contrast, all PDAce algorithms converge stably under all parameter settings, requiring fewer iterations and less time consumption in most cases.
    However, due to the update of the auxiliary variable $v^{k+1}$, PDAce and PDAce\_noc need to compute the nonlinear function $g(x)$ twice, which introduces higher computational complexity and longer running time under the same parameters. In addition, since primal-dual algorithms are sensitive to both step sizes and their ratios, our method does not always show advantages over all parameter configurations.
    
	The experimental results of Figure~\ref{fig1} show that PDAce converges significantly faster than PDAce\_noe (without extrapolation) and PDAce\_noc (without convex combination), and also outperforms the baseline algorithm GRPDA. \textcolor{black}{This fully verifies that the linearization at the convex combination $\bar{x}^{k+1}$ and the dual extrapolation update of $v^{k+1}$ in PDAce} work together to improve algorithm efficiency, and neither can be omitted.
	
	\subsection{\textcolor{black}{Numerical comparison on the QCQP test problems}}
	\textcolor{black}{In this subsection, we test the overall performance of   PDAce  against two competitive methods: the accelerated primal-dual algorithm (APD) \cite{w11}, the golden ratio primal-dual algorithm (GRPDA) \cite{w10}, and the primal-dual algorithm with convex combination, linesearch and adaptive $\beta$ strategy (PDAc-L) \cite{w5}.
	All experiments are performed on the QCQP test problems described in Subsection \ref{sec5.1} with the same settings.}

    In all numerical experiments, the parameters of PDAc-L and aPDAc-L are specified as: $\psi = 2$, $\varphi = 6/5$, $\nu = 0.9$, $\mu = 0.7$, $\xi = 2/5$, $M = 5$ and $\eta = 0.9$~\cite{w5}.

	\subsubsection{Performance of PDAce, GRPDA, APD\texorpdfstring{$(\mu=0)$}{(mu=0)} and PDAc-L}
	We first test the performance of PDAce, GRPDA, APD$(\mu=0)$ and PDAc-L on 10 random QCQP instances with $n=500$ and $m=20$. Figure \ref{fig2} shows the objective error $e_{\text{obj}}(x_n)$ and constraint violation $e_{\text{con}}(x_n)$, defined in \eqref{eq:termination_errors}, versus the number of iterations and CPU time. Similar to Figure \ref{fig1}, each solid line shows the median value, and the shaded area indicates the range of results across random tests.

    In the numerical experiment, the parameters for each test algorithm are set as follows.
    \par GRPDA: $\tau=14\times10^{-4},\ \sigma=12\times10^{-4},\ \alpha=0.81$.
    \par PDAce: $\tau=69\times10^{-4},\ \sigma=199\times10^{-4},\ \alpha=0.66$.
    \par APD($\mu=0$): $\tau=2\times10^{-4},\ \sigma=14\times10^{-4},\ \mu=0,\ \theta=1$.

    \begin{figure}[H]
		\centering
		\begin{subfigure}{0.47\textwidth}
			\centering
			\includegraphics[width=\linewidth]{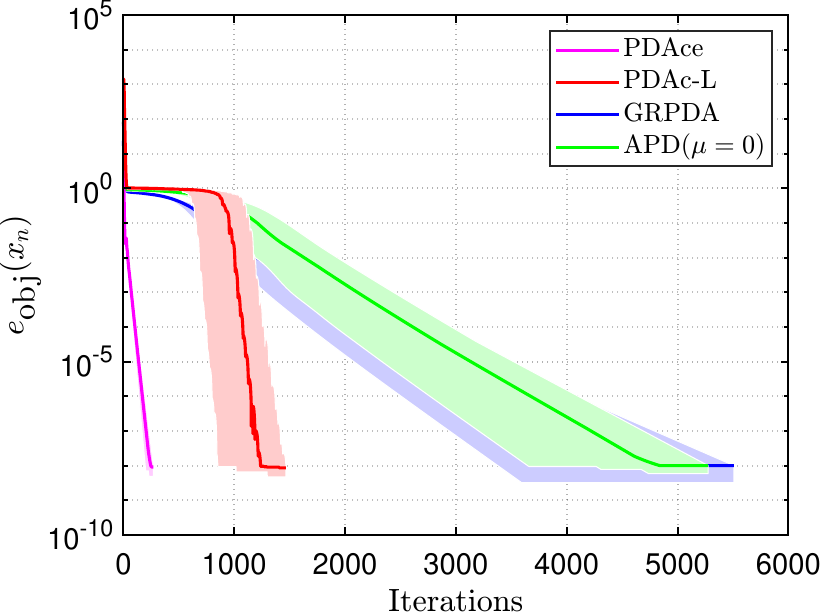}
            \caption{}
		\end{subfigure}
		\hfill
		\begin{subfigure}{0.47\textwidth}
			\centering
			\includegraphics[width=\linewidth]{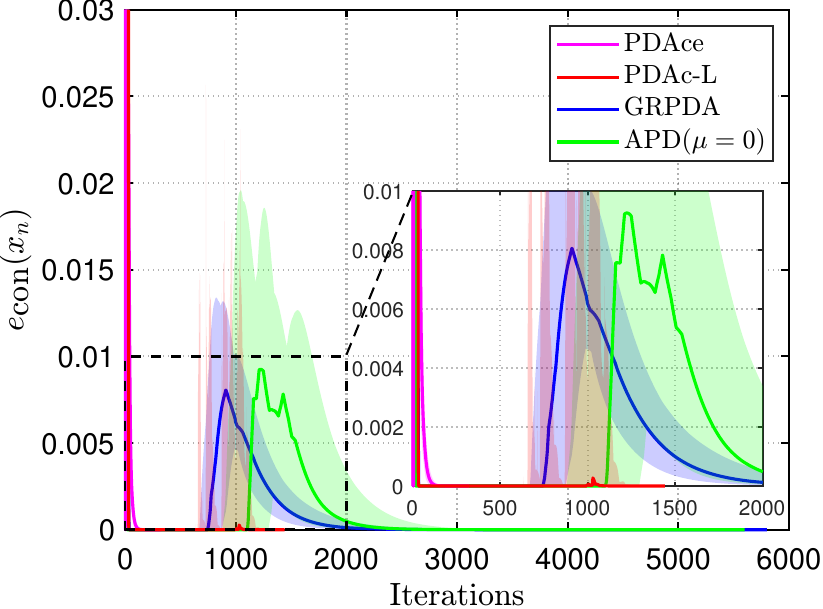}
            \caption{}
		\end{subfigure}
		
		\vspace{1em} 
		
		\begin{subfigure}{0.47\textwidth}
			\centering
			\includegraphics[width=\linewidth]{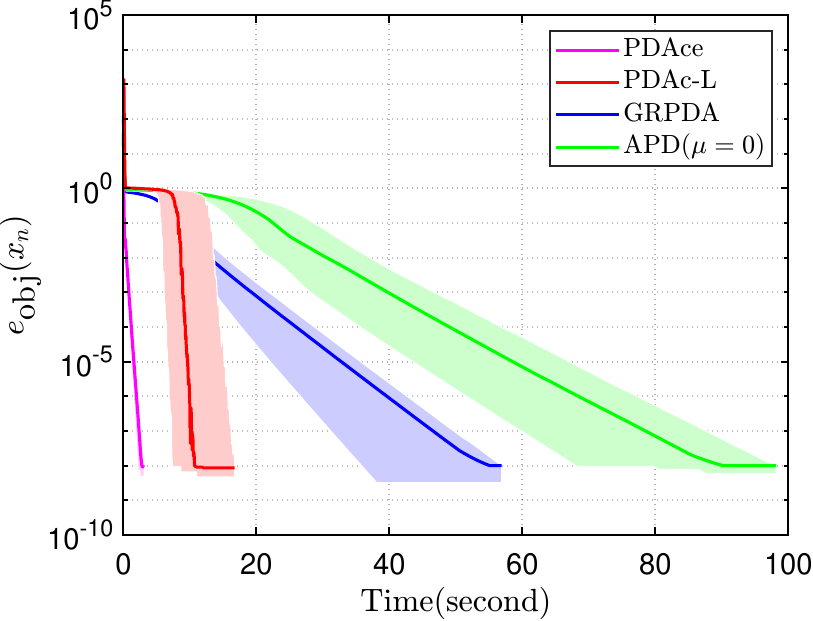}
            \caption{}
		\end{subfigure}
		\hfill
		\begin{subfigure}{0.47\textwidth}
			\centering
			\includegraphics[width=\linewidth]{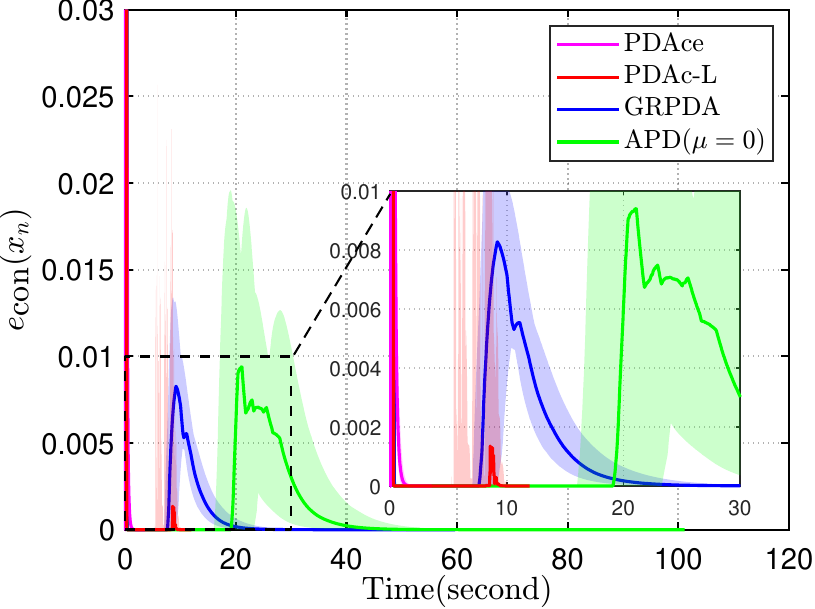}
            \caption{}
		\end{subfigure}
		
		\caption{Comparison results in terms of function value residual (left) and feasibility violations (right) on 10 random QCQP instances with $n = 500$ and $m = 20$.
			First row: Convergence behavior versus iteration counts. 
			Second row: Convergence behavior versus CPU time.}
		\label{fig2}
	\end{figure}
    \textcolor{black}{For the QCQP problems, PDAce exhibits a remarkable numerical advantage over the other test algorithms in terms of both iteration counts and CPU time.} Compared with APD, GRPDA and PDAc-L, PDAce has steeper convergence curves and smaller shaded areas (variation ranges) in multiple random experiments, demonstrating excellent numerical stability and robustness.
    
	\subsubsection{Performance of the Accelerated Variant}
	We further test the efficiency of the accelerated algorithm aPDAce (Algorithm 2) \textcolor{black}{by comparing it with aPDAce, PDAce and APD($\mu>0$)} on strongly convex QCQP problems. The strongly convex QCQP problem is the same as the convex case in section \ref{sec5.1}, except that the diagonal entries of $S_0$ are sampled uniformly from $[1, 101]$. With this setting, $S_0 = \tilde{S}_0 + I$ and $A_0 = \tilde{A}_0 + I$, so we can write the objective function as $h(x) = \tilde{h}(x) + \frac{1}{2}\|x\|^2$.
	Thus, the problem becomes a strongly convex-concave minimax problem:
	$
	\min_x \max_y \, f(x) + \tilde{h}(x) + \Phi(x,y) - \iota_+(y),
	$
	where $f(x) = \iota_X(x) + \frac{1}{2}\|x\|^2$ is strongly convex, $\Phi(x,y) = \langle g(x), y \rangle$, $H^*(y) = \iota_+(y)$, 
    and $x \in \mathcal{X} = [-10,10]^n$, $y \in \mathcal{Y} = \mathbb{R}^m_{++}$.
    \textcolor{black}{The test algorithms are used to solve} a set of 10 randomly generated strongly convex QCQP instances  with $n = 500$ and $m = 20$. 
    \textcolor{black}{ Let $\lambda_{\min}(A_0)$ denote the minimum eigenvalue of matrix $A_0$. In the numerical experiment, the parameters for each test algorithm are set as follows.}
    \par aPDAce: $\mu_f = \lambda_{min}(A_0)$, $\psi = 1.3$, $\xi = 2$, $L_{xy} = 120$, $\tau_0 = 10\times 10^{-4}$, $\tau_{max} = 1$, $\alpha = 0.76$.
    \par PDAce: $\tau = 54\times 10^{-4}$, $\sigma = 60\times 10^{-4}$, $\alpha = 0.74$.
    \par APD$(\mu>0)$: $\mu_f = \lambda_{min}(A_0)$, $\tau_0 = 4\times 10^{-4}$, $\sigma_0 = 12\times 10^{-4}$, $\theta_0 = 1$.

    \textcolor{black}{Figure \ref{fig3} shows the evolution of the objective and constraint violation using $e_{\text{obj}}(x_n)$ and $e_{\text{con}}(x_n)$. As shown in Figure \ref{fig3}, aPDAce exhibits excellent numerical performance, which converges much faster than  PDAce and also outperforms other test algorithms such as PDAc-L, aPDAc-L, and APD$(\mu>0)$.} 
    \begin{figure}[H]
		\centering
		\begin{subfigure}{0.47\textwidth}
			\centering
			\includegraphics[width=\linewidth]{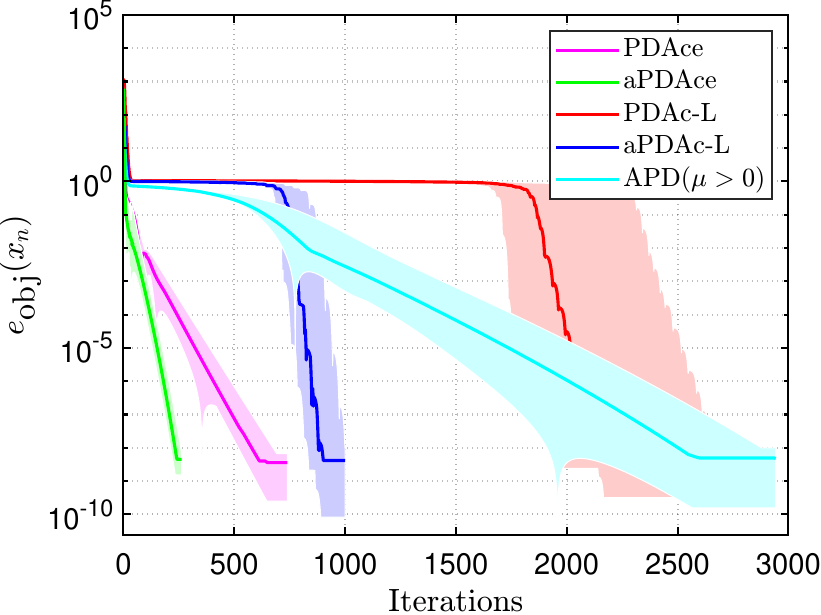}
            \caption{}
		\end{subfigure}
		\hfill
		\begin{subfigure}{0.47\textwidth}
			\centering
			\includegraphics[width=\linewidth]{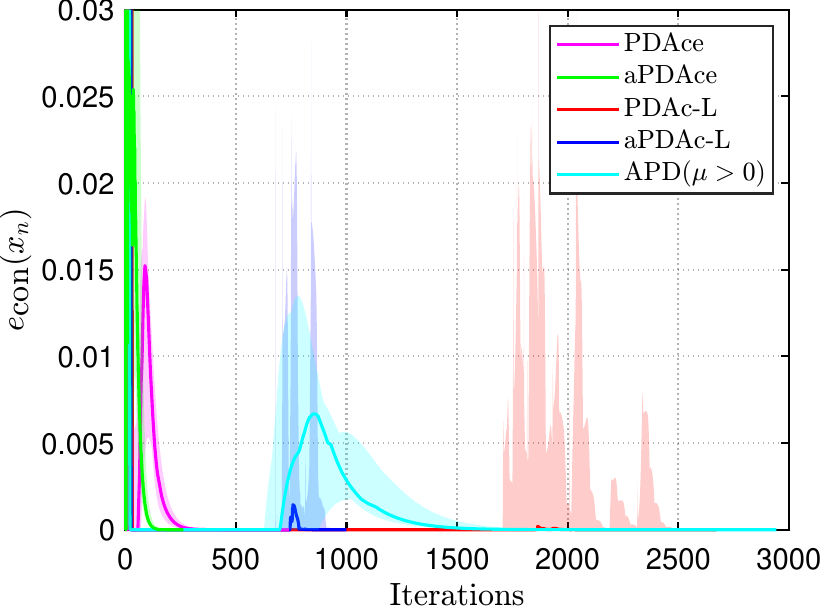}
            \caption{}
		\end{subfigure}
		
		\vspace{1em} 
		
		\begin{subfigure}{0.47\textwidth}
			\centering
			\includegraphics[width=\linewidth]{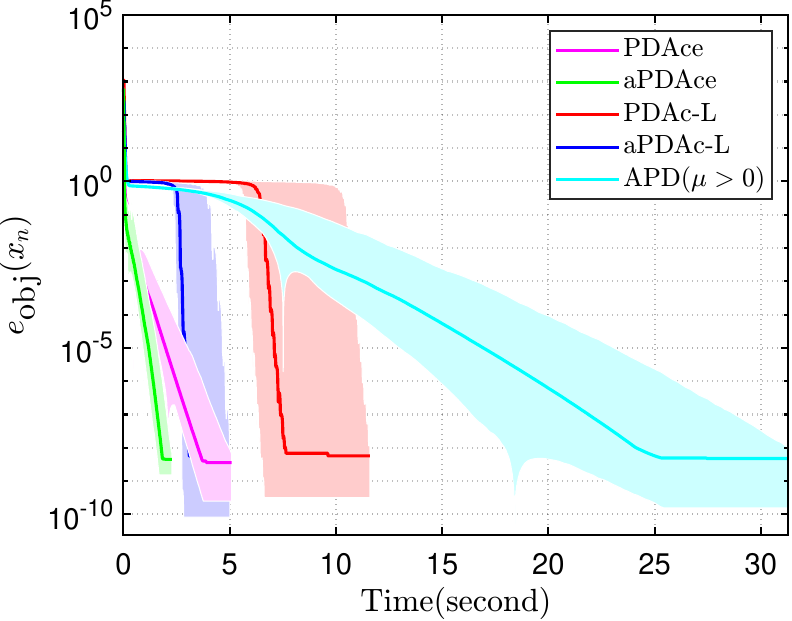}
            \caption{}
		\end{subfigure}
		\hfill
		\begin{subfigure}{0.47\textwidth}
			\centering
			\includegraphics[width=\linewidth]{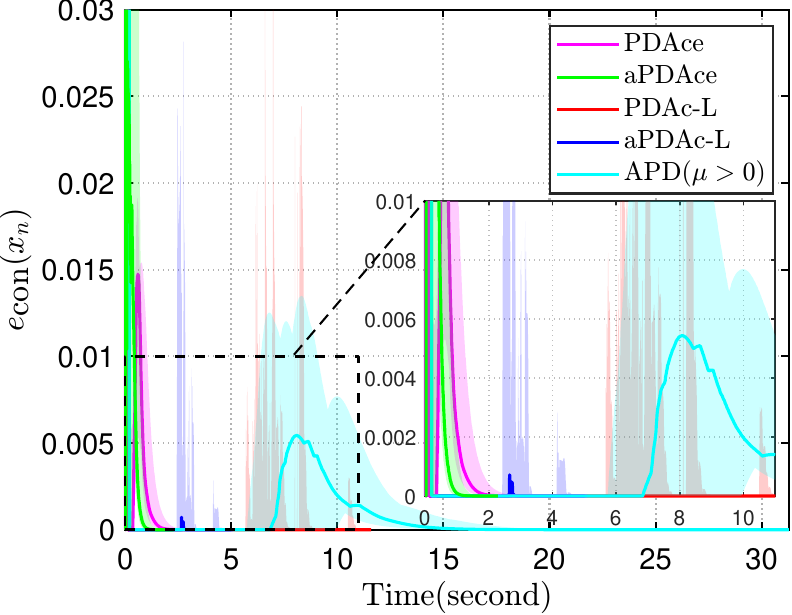}
            \caption{}
		\end{subfigure}
		
		\caption{ Comparison results in terms of function value residual (left) and feasibility violations (right) on 10 random strongly convex QCQP instances with $n = 500$ and $m = 20$.
			First row: Convergence behavior versus iteration counts. 
			Second row: Convergence behavior versus CPU time.}
		\label{fig3}
	\end{figure}
    
	\subsection{Numerical comparison on  convex-concave minimax game problem}
    \textcolor{black}{We study a two-player convex-concave minimax game (CCMMG): 
	\begin{equation}\label{eq:minimax_game}
		\min_{x \in \Delta_n} \max_{y \in \Delta_m}
		\mathcal{L}(x,y)
		:= \frac{1}{N} \sum_{j=1}^N \log\bigl(1 + \exp(a_j^\top x)\bigr) + \langle g(x), l(y) \rangle. 
	\end{equation}
	It is not difficult to check that \eqref{eq:minimax_game} is a special case of problem \eqref{prob1}, where $\Delta_n := \left\{ x \in \mathbb{R}_+^n : \sum_{i=1}^n x_i = 1 \right\}$, $\Delta_m := \left\{ y \in \mathbb{R}_+^m : \sum_{i=1}^m y_i = 1 \right\}$, $f(x)=l_{\Delta_n}(x)$,  $H^*(y)=l_{\Delta_m}(y)$,
	$h(x)  = \frac{1}{N} \sum_{j=1}^N \log(1 + e^{a_j^\top x})$,
      $\Phi(x,y) := \langle g(x), l(y) \rangle$.
    Here, $g(x) = (g_1(x),g_2(x),\dots,g_n(x))^\top$ and $l(y) = y$, where $g_i(x) = \frac12 x^\top A_i x + b_i^\top x + c_i$ and $l_j(y) = y_j$ for all $i\in\{1,2,\dots,n\}$ and $j\in\{1,2,\dots,m\}$. This problem is also a standard test problem in \cite{w10,w11,w13}. }
    
    It is not difficult to verify that a pair $(x^*, y^*)$ is \textcolor{black}{a saddle point of \eqref{eq:minimax_game}} if and only if it satisfies the following fixed-point equations:
	\[
	x^* = \mathcal{P}_{\mathcal{X}}\big(x^* - \nabla_x \mathcal{L}(x^*, y^*)\big),\quad
	y^* = \mathcal{P}_{\mathcal{Y}}\big(y^* + \nabla_y \mathcal{L}(x^*, y^*)\big),
	\]
    where $\mathcal{P}_{\mathcal{X}}$ and $\mathcal{P}_{\mathcal{Y}}$ are the projection operators onto the feasible sets $\mathcal{X}$ and $\mathcal{Y}$.
    The residuals for the primal variable $x$ and dual variable $y$ are defined as
	\begin{align*}
		r_x(x^k, y^k) &= \left\| x^k - \mathcal{P}_{\mathcal{X}} \left( x^k - \nabla_x \mathcal{L}(x^k, y^k) \right) \right\|,\\
		r_y(x^k, y^k) &= \left\| y^k - \mathcal{P}_{\mathcal{Y}} \left( y^k + \nabla_y \mathcal{L}(x^k, y^k) \right) \right\|.
	\end{align*}
    The total error of the algorithm is defined as
	\[
	\text{Error}(k) = \max \left\{ r_x(x^k, y^k), r_y(x^k, y^k) \right\}.
	  \]
	These residuals measure how close the current iterate $(x^k, y^k)$ is to the true fixed point. The test algorithm  terminates when the stopping condition: $$\max\{r_x, r_y\} \le 10^{-8} $$ is satisfied, which means the solution is close enough to the desired fixed point.

    \textcolor{black}{We compare the performance of PDAce with GRPDA, APD($\mu=0$) and PDAc-L in this numerical performance. The parameters of PDAc-L remain unchanged as in previous experiments, and the parameter configurations of other algorithms are given below.}
    \par GRPDA: $\tau=16\times10^{-4},\ \sigma=1200\times10^{-4},\ \alpha=0.618$.
    \par PDAce: $\tau=80\times10^{-4},\ \sigma=1200\times10^{-4},\ \alpha=0.618$.
    \par APD($\mu=0$): $\tau=22\times10^{-4},\ \sigma=1200\times10^{-4},\ \mu=0,\ \theta=1$.
    
    \textcolor{black}{Numerical results are illustrated in Figure~\ref{fig4}(a)–(b). PDAce significantly outperforms GRPDA and PDAc-L, and also shows slight advantages over APD($\mu=0$). The shaded areas correspond to the result ranges of ten test problems, and solid lines refer to their medians. Clearly, narrower shaded regions imply better algorithm stability.}
    As clearly observed in Figure~\ref{fig4}(a)–(b), PDAce has narrower shaded regions than GRPDA, APD($\mu=0$) and PDAc-L, confirming its superior stability.
   
    We further investigate the algorithms' parameter sensitivity via stepsize robustness heatmaps in Figure~\ref{fig4}(c)–(e). The color bar shows iteration counts for the tested algorithms. Blank areas denote divergence, dark blue regions represent convergence within 2000 iterations, and yellow regions indicate slow or non-convergence at the $10^4$ iteration limit.
    As can be easily observed in Figure~\ref{fig4}(c)–(e), compared with GRPDA and APD($\mu=0$), PDAce exhibits a significantly wider dark blue (effective convergence) region and remains stable, particularly for large step sizes. This demonstrates that PDAce possesses a larger feasible parameter space, which reduces the difficulty of parameter tuning in practical applications.

    \begin{figure}[H]
		\centering
		\begin{subfigure}{0.47\textwidth}
			\centering
			\includegraphics[width=\linewidth]{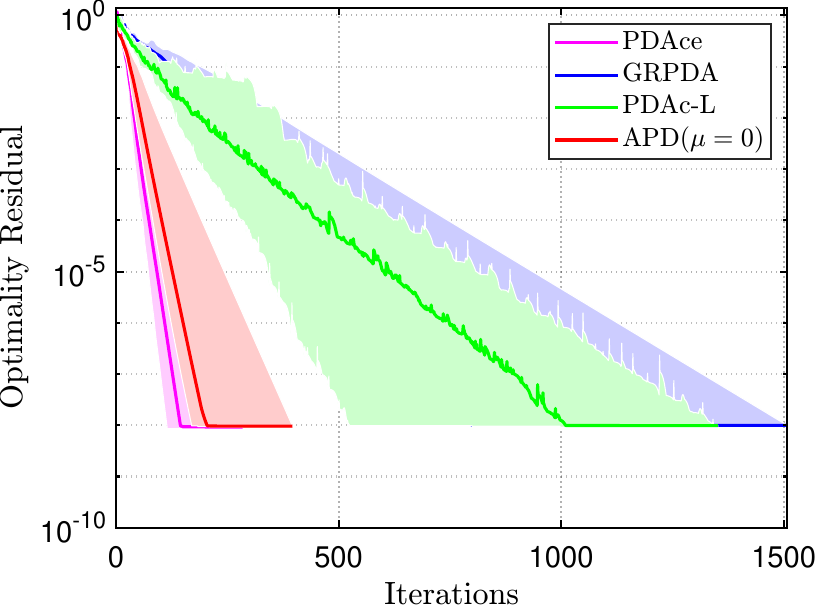}
            \caption{}
		\end{subfigure}
		\hfill
		\begin{subfigure}{0.47\textwidth}
			\centering
			\includegraphics[width=\linewidth]{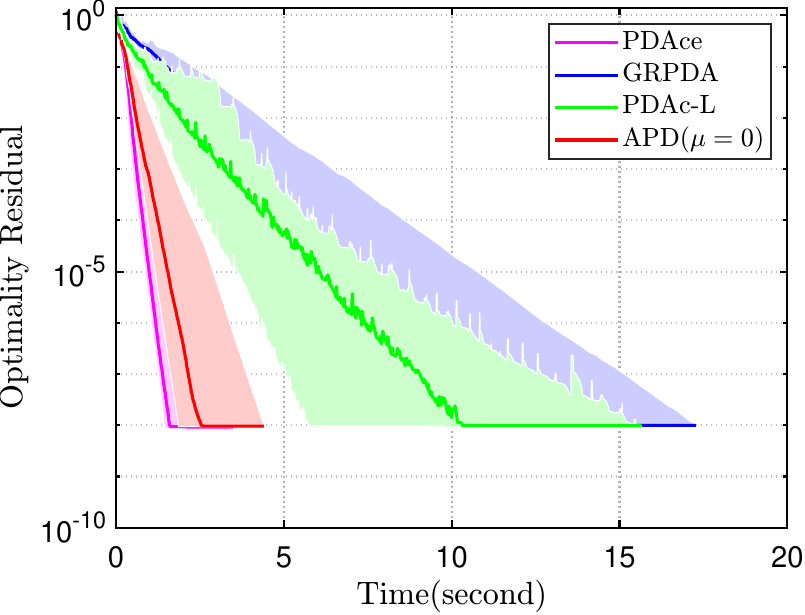}
            \caption{}
		\end{subfigure}
		
		\vspace{1em} 
		
		\begin{subfigure}{0.31\textwidth}
			\centering
			\includegraphics[width=\linewidth]{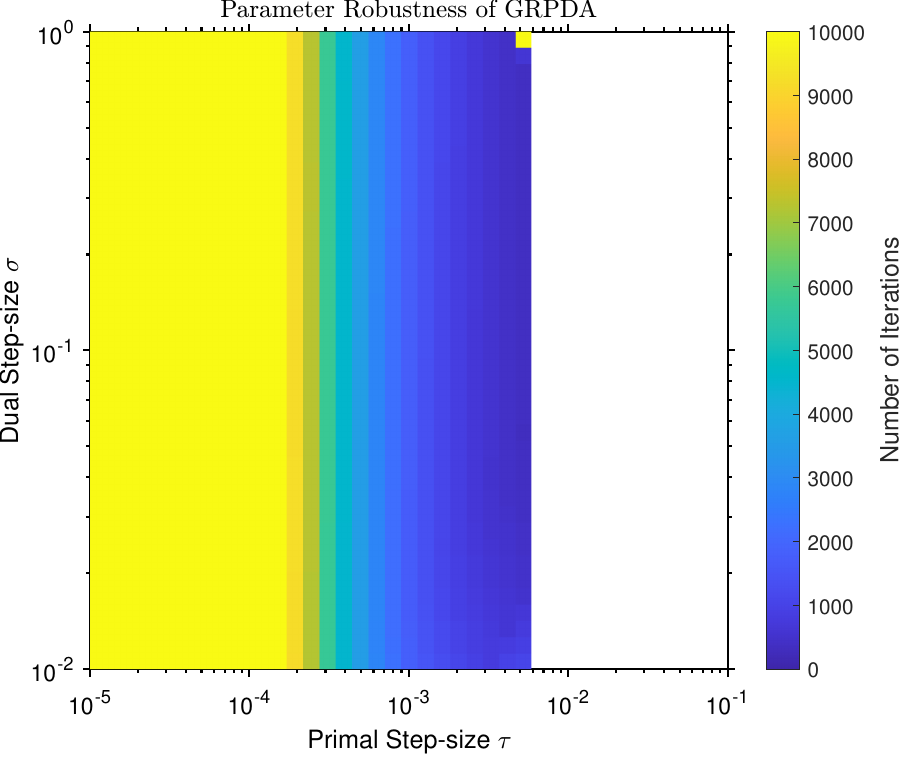}
            \caption{}
		\end{subfigure}
		\hfill
		\begin{subfigure}{0.31\textwidth}
			\centering
			\includegraphics[width=\linewidth]{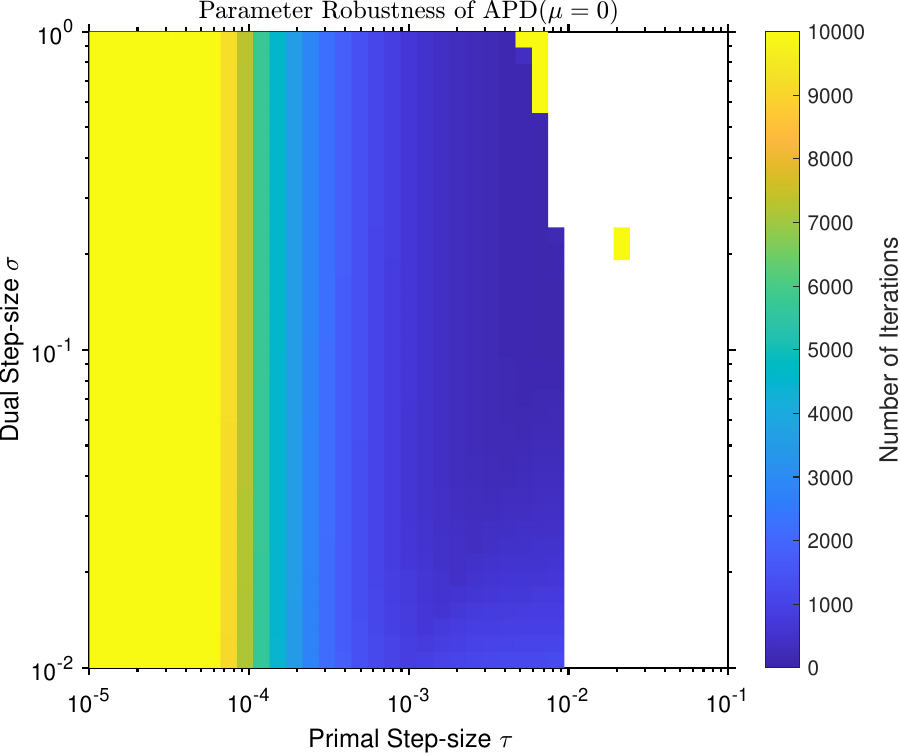}
            \caption{}
		\end{subfigure}
		\hfill
		\begin{subfigure}{0.31\textwidth}
			\centering
			\includegraphics[width=\linewidth]{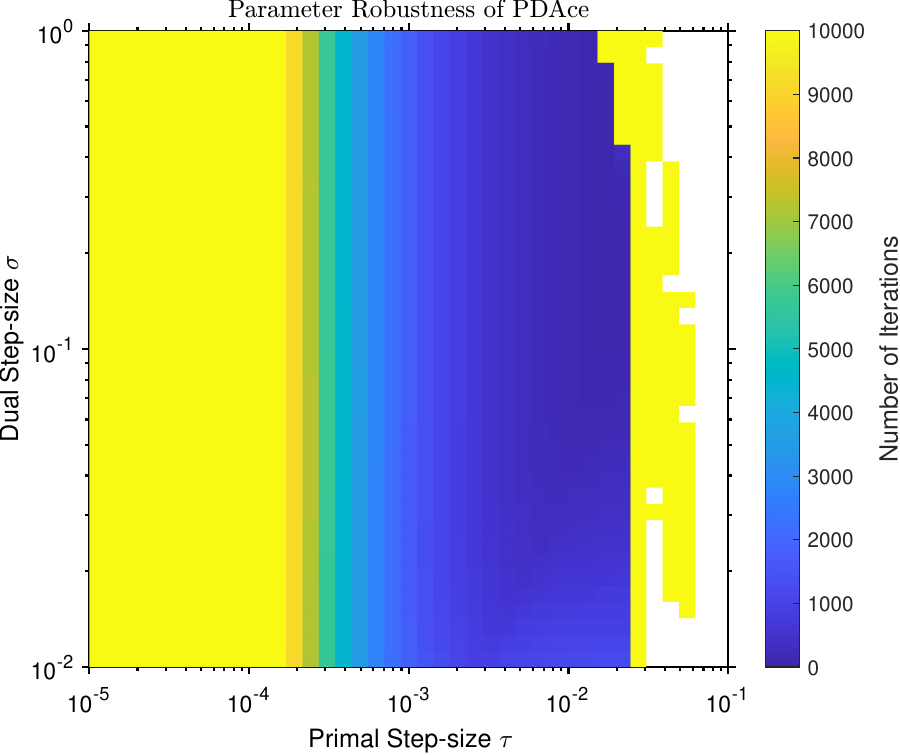}
            \caption{}
		\end{subfigure}
        
		\caption{ Comparison results in terms of convergence behavior versus iteration counts and CPU time (first row) and stepsize admissible range test (second row) on 10 random CCMMG instances with $n = 100$ and $m = 300$.}
		\label{fig4}
	\end{figure}

	\subsection{Numerical comparison on structured  convex-concave minimax problem}
	\textcolor{black}{To further evaluate the performance of     PDAce, we use the following structured saddle point problem (SPP)\cite{w9}:}
    	\begin{equation}
		\min_{x \in \mathbb{R}^n} \max_{y \in \mathbb{R}_+^m} \mathcal{L}(x,y) 
		:= \|A x - a\|^2 + \langle y, e^{B x} - b \rangle - \|C y - c\|^2,
		\label{eq:exp_coupling_spp}
	\end{equation}
	where   $A \in \mathbb{R}^{n\times n}$, $B \in \mathbb{R}^{m\times n}$, $C \in \mathbb{R}^{m\times m}$, $a \in \mathbb{R}^n$, $b \in \mathbb{R}^m$, and $c \in \mathbb{R}^m$.  This problem often appears in practical applications such as chemical reaction control, battery charging control, and population dynamics control.  

    \textcolor{black}{We compare PDAce with with APD$(\mu=0)$, GRPDA and PDAc-L, and 
	  the parameters used for each test algorithm are given below.}
    \par PDAce: $\tau=0.017,\ \sigma=0.05,\ \alpha=0.618$.
    \par GRPDA: $\tau=0.01,\ \sigma=0.05,\ \alpha=0.618$.
    \par APD($\mu=0$): $\tau=0.005,\ \sigma=0.005,\ \mu=0,\ \theta=1$.
    
	\begin{figure}[H]
	    \centering
	    \includegraphics[width=0.47\linewidth]{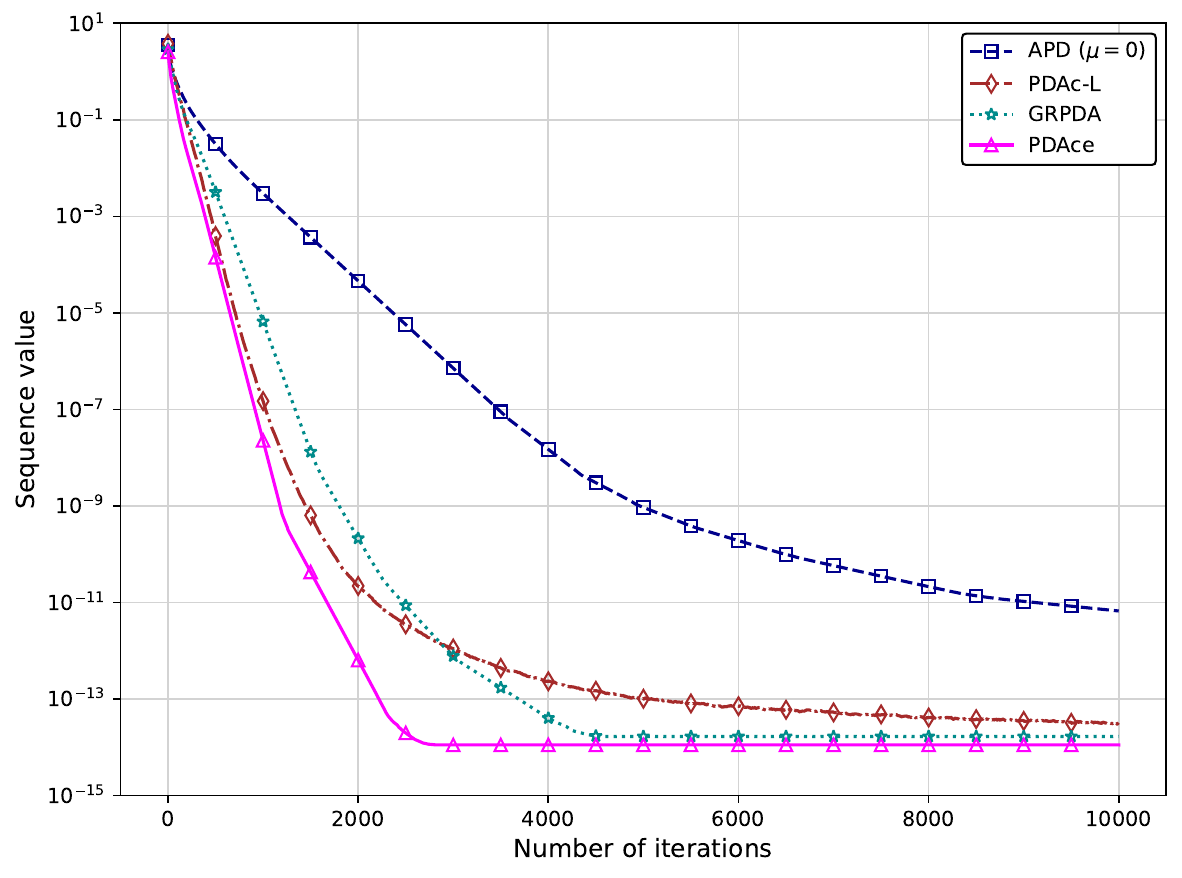}
	    \caption{Convergence behavior of PDAce and competing first-order methods on the saddle point problem (SPP) with exponential coupling. The figure plots the sequence value versus the number of iterations.}
	    \label{fig5}
	\end{figure}
    
    \textcolor{black}{The results are displayed in Figure~\ref{fig5}. As shown in Figure~\ref{fig5}, PDAce achieves the lowest sequence values and converges the fastest  compared with APD$(\mu=0)$, GRPDA and PDAc-L. This demonstrates the generalization ability  of PDAce in handling nonlinear coupling mapping.}
	
	\section{Conclusion and discussions}\label{section6}
    We present a new primal-dual algorithm (PDAce) by integrating convex combination and dual extrapolation for convex-concave saddle point problems with nonlinear coupling term. We construct a new Lyapunov potential function to establish the global convergence and the $\mathcal{O}(1/N)$ ergodic sublinear convergence rate of PDAce. If $f$ and $H^*$ are both strongly convex, we prove that the generated sequence converges linearly. When the primal function is strongly convex, we develop an accelerated version (aPDAce) of PDAce and establish its $\mathcal{O}(1/N^2)$ ergodic convergence rate. Numerical experiments 
    demonstrate that PDAce significantly outperforms APD($\mu=0$), GRPDA, and PDAc-L.

    Numerical results show that, if linearizing the function $h$ at the convex combination point $\bar{x}^k$ can lead to improving numerical stability. However, this approach necessitates the construction of new Lyapunov potential functions in theoretical analysis, which increases the complexity and difficulty of relevant analysis. Even so, this finding provides a promising direction for future investigations.
    
    Another important direction arises from a practical limitation of the current accelerated scheme, such as GPD-CM \cite{w9} and GRPDA \cite{w10}. Specifically, achieving the accelerated rate in our framework explicitly relies on the knowledge of the Lipschitz constant $L_{xy}$. However, in nonlinear saddle point problems, computing or even tightly estimating this global constant is notoriously difficult and computationally expensive. To circumvent this issue and enhance the algorithmic practicability, future research could focus on designing parameter-free adaptive stepsize rules or backtracking linesearch strategies that can automatically estimate the local smoothness without requiring explicit prior knowledge of $L_{xy}$.
    
    Furthermore, the new Lyapunov framework developed in this paper  eliminates nonlinear residuals via mapping-space extrapolation, and is not limited to convex-concave settings. By resolving geometric discrepancies and structural hysteresis, it can be extended to the theoretical analysis of primal-dual algorithms for solving more complex nonconvex-concave saddle point problems and might be very useful for future algorithmic research.\\

    \noindent\textbf{Declaration of competing interest}\\
    	
    \hspace{1em}The authors declare that they have no known competing financial interests or personal relationships that could have appeared to influence the work reported in this paper.\\
    	
    \noindent\textbf{Acknowledgements}\\
    	
    \hspace{1em} 
    This research is supported by the National Science Foundation of China (Nos. 12261019,12571329,12571320). \\
    	
    \noindent\textbf{Data availability}\\
    	
    \hspace{1em}We do not analyse or generate any datasets, because our work proceeds within a theoretical and mathematical approach.\\
	
	\noindent\textbf{CRediT authorship contribution statement}\\
	
	\textbf{Jialong Li:} Writing - original draft, Investigation, Formal analysis, Conceptualization, Visualization. 
	\textbf{Zexian Liu:} Writing - review \& editing, Conceptualization, Validation, Software, Methodology, Funding acquisition, Resources, Supervision, Validation. \textbf{Xiaokai Chang:} Writing - review \& editing, Methodology, Supervision, Validation.

\end{document}